\documentclass[a4paper,fleqn]{cas-dc}
\usepackage[numbers,sort&compress]{natbib}
\usepackage{amsmath,amsthm,amssymb,amsfonts}
\usepackage{graphicx}
\usepackage{textcomp}
\usepackage{xcolor}
\usepackage{mathrsfs}
\usepackage{indentfirst}
\usepackage{multirow}
\usepackage{float}
\usepackage{etoolbox}
\allowdisplaybreaks[4]
\newtheorem{theorem}{Theorem}
\newtheorem{lemma}{Lemma}
\newtheorem{corollary}{Corollary}

\newtheorem{assumption}{Assumption}

\newtheorem{example}{Example}
\newtheorem{remark}{Remark}
\newtheorem{problem}{Problem}

\def\BibTeX{{\rm B\kern-.05em{\sc i\kern-.025em b}\kern-.08em
    T\kern-.1667em\lower.7ex\hbox{E}\kern-.125emX}}

\ExplSyntaxOn
\cs_set:Npn \__first_footerline: { }
\ExplSyntaxOff

\begin{document}
\let\WriteBookmarks\relax
\def\floatpagepagefraction{1}
\def\textpagefraction{.001}

\shorttitle{Fully Coupled Nonlinear Mean-Field FBS$\Delta$Es}
\shortauthors{Zheng et al.}
\title[mode=title]{Fully Coupled Nonlinear Mean-Field FBS$\Delta$Es: Solvability and an LQ Buffer-Adjustment Illustration}

\author[1]{Yingjie Zheng}
\author[1]{Qingxin Meng}
\author[1]{Maoning Tang}\cormark[1]
\cortext[cor1]{Corresponding author.}
\ead{tmorning@huznu.edu.cn}

\address[1]{Department of Mathematical Sciences, Huzhou Normal University, Huzhou 313000, Zhejiang, China}

\fntext[fn1]{This work was supported by the National Natural Science Foundation of China (No.~12271158), the Key Program of Zhejiang Provincial Natural Science Foundation (No.~LZ22A010005), the Natural Science Foundation of Huzhou City (No.~2023YZ46), and the Postgraduate Research and Innovation Project of Huzhou Normal University (No.~2024KYCX66).}

\begin{abstract}
This paper establishes sufficient conditions for finite-horizon solvability of
fully coupled nonlinear mean-field forward--backward stochastic difference
equations with dependence on unconditional first moments. The forward recursion
uses conditional projections of the next backward state and its product with
the innovation. Building on existing domination--monotonicity methods, we
formulate deterministic matrix combinations in centered and mean coordinates
and prove a continuation estimate uniform in the homotopy parameter. Global
Lipschitz continuity and one active domination--coercivity direction yield a
unique square-integrable adapted solution, an a priori bound, and coefficient
stability. The active parameter can be normalized without imposing an additional
smallness restriction on the original coefficients. A sign transformation
handles the opposite monotonicity orientation. A nonlinear example with
saturating state and mean interactions verifies the assumptions, including
degenerate domination directions. A scalar mean-field LQ buffer-adjustment
model illustrates the theorem: its Hamiltonian solution characterizes the
unique open-loop optimizer. An exact finite scenario-tree calculation checks
this characterization against direct quadratic optimization.
\end{abstract}

% Highlights page removed (the CAS template typesets it as a separate first page).
% The text is kept here commented out in case it is needed for a journal submission.
% \begin{highlights}
% \item Centered and mean matrix combinations support discrete-time continuation
% \item Solvability and stability hold uniformly in the homotopy parameter
% \item A nonlinear example verifies the assumptions with degenerate directions
% \item An LQ buffer model is checked by exact finite scenario-tree optimization
% \end{highlights}

\begin{keywords}
Mean-field forward--backward stochastic difference equations \sep domination-monotonicity conditions \sep continuation method \sep linear-quadratic control \sep buffer adjustment.
\end{keywords}

\maketitle

\section{Introduction}
Forward--backward stochastic systems provide a natural
framework for stochastic control, differential games, mathematical finance,
and recursive optimization. Since the seminal work on backward stochastic
differential equations~\cite{Pardoux1990}, the well-posedness of coupled
forward--backward stochastic differential equations (FBSDEs) has been
extensively studied through monotonicity conditions, continuation methods,
and decoupling techniques; see, among others,
\cite{Antonelli1993,Hu1995,Ma1999,Delarue2002,Yong1997,Yong2010,Ma2015}.
Their discrete-time counterparts, forward--backward stochastic difference
equations (FBS$\Delta$Es), arise naturally from stochastic maximum
principles and Hamiltonian systems for discrete-time stochastic control.
Linear FBS$\Delta$Es and their control applications have been studied in
\cite{Xu2017,Xu2018,Song2020}. Beyond the linear setting, Ji and
Liu~\cite{JiLiu2024} investigated solvability for a class of
forward--backward stochastic difference equations, while
Song~\cite{Song2023} considered general fully coupled FBS$\Delta$Es with
delay and related control applications. On the control-theoretic side,
Dong, Nie, and Wu~\cite{DongNieWu2022} developed a maximum principle for
discrete-time mean-field stochastic control problems, and Song and
Wu~\cite{SongWu2024} obtained a general maximum principle for discrete-time
stochastic control problems with possibly non-convex control domains.
These works characterize optimality conditions for controlled discrete-time
systems driven by stochastic difference equations, rather than a
coefficient-level solvability theory for the underlying uncontrolled
mean-field FBS$\Delta$E. Numerical and approximation aspects
connecting continuous- and discrete-time backward systems can be found in
\cite{Ma2002,Gobet2005,Zhang2004,Bender2008,Delarue2006,Cheridito2013}.

A distinctive feature of discrete-time FBS$\Delta$Es is that the backward
component enters the forward dynamics through conditional projections.
For example, the discrete duality calculation gives
\begin{equation*}
	\mathbb E\langle \sigma_k w_k,y_{k+1}\rangle
	=
	\mathbb E\langle \sigma_k,z'_{k+1}\rangle,
	\quad
	z'_{k+1}
	=
	\mathbb E[y_{k+1}w_k\mid\mathcal F_{k-1}],
\end{equation*}
rather than the continuous-time It\^o pairing between the diffusion
coefficient and a martingale-integrand process. Consequently, solvability
arguments for fully coupled discrete-time systems must explicitly account
for the projected variables $(y'_{k+1},z'_{k+1})$. This difficulty is
already present in the non-mean-field setting. Recently, Niu et
al.~\cite{Niu2026a} developed a domination--monotonicity framework for
fully coupled nonlinear FBS$\Delta$Es, providing a discrete-time analogue
of the domination--monotonicity approach initiated by Yu~\cite{Yu2022} and
further developed for mean-field type FBSDEs by Tian and
Yu~\cite{TianYu2023}. They established existence and uniqueness by a continuation method together
with a priori and stability estimates. Related stochastic maximum
principles under generalized monotonicity conditions were subsequently
developed in~\cite{Niu2026b}. In the infinite-horizon discrete-time
setting, Ma, Li, and Meng~\cite{MaLiMeng2025} developed a
domination--monotonicity framework for fully coupled nonlinear
FBS$\Delta$Es whose backward coefficients depend on conditional
expectations of future quantities, and proved unique solvability by a
continuation method. More recently, Chen and Zhang~\cite{ChenZhang2026}
established the well-posedness of fully coupled nonlinear
FBS$\Delta$Es on an infinite horizon by an explicit spectral-contraction
argument in exponentially weighted spaces, replacing domination--monotonicity
and continuation by a computable discount threshold. Together with
\cite{JiLiu2024,Song2023}, these works have substantially advanced the
solvability and control theory of nonlinear, fully coupled
FBS$\Delta$Es without mean-field interactions.

In parallel, a substantial literature has developed for discrete-time
mean-field stochastic control. Elliott, Li, and Ni~\cite{ElliottLiNi2013}
established solvability results for finite-horizon discrete-time
mean-field stochastic linear--quadratic (MF-LQ) control and characterized
optimal feedback laws through coupled Riccati difference equations.
The infinite-horizon counterpart and its stability properties were
subsequently studied by Ni, Elliott, and Li~\cite{NiElliottLi2015}.
Indefinite discrete-time MF-LQ problems were further investigated by
Ni, Zhang, and Li~\cite{NiZhangLi2015}. From a forward--backward
perspective, Song and Liu~\cite{SongLiu2021} characterized open-loop
solvability of a finite-horizon discrete-time MF-LQ problem through the
solvability of the MF-FBS$\Delta$E arising from the stochastic maximum
principle. These results demonstrate that mean-field forward--backward
systems are closely connected with discrete-time optimal control, although
their analysis is primarily tied to linear--quadratic structures.

A parallel strand addresses discrete-time stochastic systems of
forward--backward or recursive type with expectation coupling. Wu and
Zhang~\cite{WuZhang2022} derived a Pontryagin-type maximum principle for
discrete-time stochastic optimal control problems and stochastic games, and
Zhang and Zhang~\cite{ZhangZhang2025} later studied discrete-time
forward--backward stochastic difference equations with control constraints
and their application to a recursive-utility portfolio problem. In the
mean-field setting, Wu, Yu, and Zhang~\cite{WuYuZhang2026} analyzed a
discrete-time linear--quadratic mean-field game with state-, control-, and
mean-dependent diffusion and constructed decentralized $\varepsilon$-Nash
strategies through a discrete-time consistency condition. These works
characterize optimal controls and equilibria for controlled discrete-time
mean-field systems; they do not provide a coefficient-level well-posedness
theory for the underlying nonlinear mean-field FBS$\Delta$E.

Mean-field FBS$\Delta$Es have also been investigated more directly.
Song and Liu~\cite{SongLiu2020MF} established a stochastic maximum
principle for controlled fully coupled forward--backward stochastic
difference systems of mean-field type under weak convexity assumptions.
Their objective was the derivation of necessary and sufficient optimality
conditions rather than a general nonlinear well-posedness theory for the
underlying uncontrolled MF-FBS$\Delta$E. More recently, Ma, Mou and
Ho~\cite{Ma2025} systematically studied fully coupled \emph{linear}
MF-FBS$\Delta$Es. They first showed that three formulations of
discrete-time forward--backward systems available in the literature can be
cast into a unified paradigm, and then used two coupled difference Riccati
equations to obtain a \emph{necessary and sufficient} finite-horizon
well-posedness condition together with explicit solution representations.
On the infinite horizon, stabilizability and detectability yield a
sufficient condition through the stabilizing solution of two coupled
algebraic Riccati equations, and open-loop solvability of finite- and
infinite-horizon MF-LQ problems follows as an application. Thus, the
existing discrete-time mean-field literature provides a mature theory for
MF-LQ control and, more recently, for fully coupled linear
MF-FBS$\Delta$Es.

The result of~\cite{Ma2025} is the closest discrete-time antecedent of the
present one, and the two differ in four respects. \emph{(i)} Their Riccati
mechanism is tied to linear--quadratic structure: the criterion is read off
from the solvability of two matrix difference equations, which degenerates
once the coefficients are nonlinear. Here the conditions are imposed
directly on the coefficients, through domination of frozen differences and
coercivity of centered and mean matrix combinations, and are closed by
continuation rather than by Riccati solvability. \emph{(ii)} For the linear
class, \cite{Ma2025} characterizes well-posedness and gives explicit
solution representations; the assumptions below are sufficient, and no
characterization of all solvable nonlinear coefficient families is claimed
(Section~\ref{sec:scope}). \emph{(iii)} The mean-field coupling is of a
different type. The linear relations used in~\cite{Ma2025} are compatible
with Riccati decoupling, whereas here the coefficients act on the next-step
conditional projections $y'_{k+1}=\mathbb E[y_{k+1}\mid\mathcal F_{k-1}]$
and $z'_{k+1}=\mathbb E[y_{k+1}w_k\mid\mathcal F_{k-1}]$ together with their
first moments, and these same projections must be paired with the frozen
coefficient differences in order to close the estimate. \emph{(iv)} The
results here are finite-horizon and require no full-rank condition on the
coercivity matrices: directions lying in their kernels are controlled by a
companion one-sided estimate. The infinite-horizon treatment
of~\cite{Ma2025}, by contrast, rests on a stabilizing algebraic Riccati
solution.

In continuous time, the mean-field domination--monotonicity framework of
Tian and Yu~\cite{TianYu2023} was extended to McKean--Vlasov
FBSDEs by Chen and Nie~\cite{ChenNie2025} and, in a nonlinear form, to
extended MF-FBSDEs by Wu~\cite{Wu2026}. These advances concern continuous-time
laws and do not transfer directly to the next-step projection structure of
discrete-time systems studied here.

The present paper studies a first-moment mean-field extension of the
nonlinear discrete-time framework of~\cite{Niu2026a}. Its coefficients depend
on $\theta_k=(x_k,y'_{k+1},z'_{k+1})$ and $\mathbb E[\theta_k]$, and its
terminal mapping depends on $x_N$ and $\mathbb E[x_N]$.
Table~\ref{tab:scope} summarizes the scope of the closest results.

The following four points are what the paper establishes beyond those
frameworks.
\begin{itemize}
\item[(N1)] A \emph{deterministic-matrix} domination--monotonicity formulation in
centered and mean coordinates whose coercivity acts on the combinations
$B_k\widehat Y^{(1)}+C_k\widehat Z^{(1)}+\widetilde B_k\widehat Y^{(2)}
+\widetilde C_k\widehat Z^{(2)}$ that are compatible with the next-step
conditional projections. The continuous-time mean-field construction
of~\cite{TianYu2023} pairs coefficients with It\^o martingale integrands; on a
discrete filtration the corresponding objects are
$y'_{k+1}=\mathbb E[y_{k+1}\mid\mathcal F_{k-1}]$ and
$z'_{k+1}=\mathbb E[y_{k+1}w_k\mid\mathcal F_{k-1}]$, and pairing the frozen
coefficient differences with these projections is what closes the estimate.
\item[(N2)] An estimate whose constant is \emph{uniform in the homotopy
parameter}. Normalizing the single active coercivity parameter
(Lemma~\ref{lem:normalization}) suffices, so no additional smallness restriction
is imposed on coefficients that already satisfy the domination and monotonicity
inequalities. Because one coercion direction is retained, a system satisfying
stronger two-sided conditions needs only one of them to be verified.
\item[(N3)] Domination matrices that may be \emph{degenerate}. Directions in
their kernels are not controlled by coercivity but by the companion one-sided
estimate; no full-rank condition on the domination matrices is required.
\item[(N4)] A genuine extension beyond the linear class.
Corollary~\ref{cor:degenerate} shows that switching off the mean-field terms
returns exactly the non-mean-field theory of~\cite{Niu2026a}, whereas
Example~\ref{ex:nonlinear} is a nonlinear mean-field system with degenerate
directions that satisfies the assumptions and lies outside the linear
MF-FBS$\Delta$E theory of~\cite{Ma2025}.
\end{itemize}
Restrictions that remain in force are collected in
Section~\ref{sec:scope}.

\begin{table*}[t]
\centering\small
\caption{Scope of the closest solvability frameworks.}\label{tab:scope}
\begin{tabular}{p{0.12\textwidth}p{0.19\textwidth}p{0.19\textwidth}p{0.19\textwidth}p{0.19\textwidth}}
\hline
Feature & Niu et al.~\cite{Niu2026a}, Ma, Li, Meng~\cite{MaLiMeng2025}, Chen--Zhang~\cite{ChenZhang2026} & Tian--Yu~\cite{TianYu2023}, Chen--Nie~\cite{ChenNie2025} & Ma et al.~\cite{Ma2025} & Present work\\
\hline
Time and system & Discrete; nonlinear, fully coupled & Continuous; nonlinear mean-field & Discrete; linear MF & Discrete; nonlinear MF\\
Interaction & Next-step projections; no mean-field terms & State and mean-field backward variables & Linear MF relations & First moments of projected triples\\
Method & Domination--monotonicity, continuation, or spectral contraction & Mean-field domination--monotonicity & Coupled Riccati equations & Deterministic matrix domination and continuation\\
Solvability claim & Unique solvability and estimates & Unique solvability, estimates, coefficient dependence & Linear necessary/sufficient criterion & Nonlinear sufficient conditions and stability\\
Role in this paper & Discrete projection and duality mechanism & Mean-field domination construction & Closest linear antecedent & Nonlinear extension and LQ illustration\\
\hline
\end{tabular}
\end{table*}

The technical point is the compatibility of two estimates. The discrete duality
identity pairs the diffusion coefficient with
$z'_{k+1}=\mathbb E[y_{k+1}w_k\mid\mathcal F_{k-1}]$, whereas coercivity is
expressed through deterministic matrix combinations of centered and mean
variables. Orthogonality alone bounds these combinations, not the complete
solution. We therefore prove that the same combinations dominate the frozen
forward or backward coefficient differences. The companion one-sided estimate
then controls the remaining solution variables. Conditional projection bounds
close the source pairing, producing an estimate independent of the current
homotopy parameter. No full-rank condition on the domination matrices is needed.
These steps are assembled into the continuation argument of
Theorem~\ref{thm:main}.

The results are as follows.
\begin{itemize}
\item[(i)] We give a deterministic-matrix domination--monotonicity formulation
for nonlinear first-moment MF-FBS$\Delta$Es, including mixed initial and terminal
conditions. Coercivity acts on specified matrix combinations in centered and
mean coordinates; it need not control each complete mode directly.
\item[(ii)] We prove existence, uniqueness, a priori bounds, and coefficient
stability under global Lipschitz continuity and one active coercivity direction.
The continuation step is uniform in the homotopy parameter; its constants depend
on the active parameter through $\mu^{-2}$ (or $\nu^{-2}$), as made explicit
in~\eqref{eq:mu-dependence}--\eqref{eq:nu-dependence}. Systems satisfying
stronger two-sided conditions can be
treated by retaining one direction. The opposite sign orientation follows by
conjugation.
\item[(iii)] We verify the assumptions for a genuinely nonlinear system with
saturating state and mean interactions. A scalar LQ buffer-adjustment model
shows how the theorem yields a Hamiltonian characterization of the unique
open-loop optimizer. An exact binary-tree computation independently compares
the Hamiltonian equations and direct quadratic optimization.
\end{itemize}
Every restriction that remains in force---sufficient rather than necessary
conditions, unconditional rather than conditional means, one active coercivity
direction, and the illustrative status of the LQ computation---is collected in
Section~\ref{sec:scope}.

Section~\ref{sec:formulation} states the system and assumptions.
Section~\ref{sec:results} gives the main results and nonlinear example.
Section~\ref{sec:proofs} proves the continuation estimates.
Section~\ref{sec:application} treats the LQ illustration and computation.
Section~\ref{sec:scope} states the scope and limitations, and
Section~\ref{sec:conclusion} concludes.

\section{System Formulation and Hypotheses}\label{sec:formulation}
Choose a positive finite horizon $N\in\{1,2,\ldots\}$ and abbreviate the running and extended
index sets by $\mathbb T=\{0,\ldots,N-1\}$ and
$\overline{\mathbb T}=\{0,\ldots,N\}$, respectively. All random objects are
defined on a complete space $(\Omega,\mathcal F,\mathbb P)$ equipped with
$\mathbb F=\{\mathcal F_k\}_{k=-1}^{N-1}$. The scalar innovation sequence
$\{w_k\}_{k\in\mathbb T}$ is normalized conditionally:
\begin{flalign*}
	&
	\mathbb{E}\left[ w_k \mid \mathcal{F}_{k-1} \right] = 0,
	\quad
	\mathbb{E}\left[ w_k^2 \mid \mathcal{F}_{k-1} \right] = 1,
	\qquad k \in \mathbb{T}.
	&
\end{flalign*}
Take $\mathcal F_k$ to be the completion of $\sigma(w_0,\ldots,w_k)$,
and $\mathcal F_{-1}$ to be the completed trivial $\sigma$-algebra.
Variables measurable at time $-1$ are deterministic up to almost-sure equality.
Independence of the innovations is not required by the theory.

We use $|\cdot|$ and $\langle\cdot,\cdot\rangle$ for Euclidean geometry and
$\mathbb S^n$ for the symmetric $n\times n$ matrices. On
$\mathbb R^{n\times m}$ the matrix norm and pairing are fixed by
\begin{flalign*}
&|A|=\sqrt{\mathrm{tr}(AA^\top)},
&\langle A,B\rangle=\mathrm{tr}(AB^\top),\quad A,B\in\mathbb R^{n\times m}.&
\end{flalign*}

Given a Hilbert space $(\mathbb H,\|\cdot\|_{\mathbb H})$, the notation below
distinguishes terminal random variables from adapted sequences:

\begin{itemize}
	\item
	$L^2_{\mathcal{F}_{N-1}}(\Omega; \mathbb{H})$: the set of all $\mathbb{H}$-valued $\mathcal{F}_{N-1}$-measurable random variables $\xi$ satisfying
	\begin{flalign*}
		&\hspace*{-\leftmargin}
		\|\xi\|_{L^2_{\mathcal{F}_{N-1}}(\Omega; \mathbb{H})}
		:= \left[ \mathbb{E} \|\xi\|_{\mathbb{H}}^2 \right]^{\frac{1}{2}} < \infty.
		&
	\end{flalign*}
	
	\item
	$L^2_{\mathbb{F}}(\mathbb{T}; \mathbb{H})$: the set of all $\mathbb{H}$-valued stochastic processes $f(\cdot) = \{f_k\mid f_k \text{ is } \mathcal{F}_{k-1}\text{-measurable}, k\in\mathbb{T}\}$ satisfying
	\begin{flalign*}
		&\hspace*{-\leftmargin}
		\|f(\cdot)\|_{L^2_{\mathbb{F}}(\mathbb{T}; \mathbb{H})}
		:= \left[ \mathbb{E}\left( \sum_{k=0}^{N-1} \|f_k\|_{\mathbb{H}}^2 \right) \right]^{\frac{1}{2}} < \infty.
		&
	\end{flalign*}

	\item
	$L^2_{\mathbb{F}}(\overline{\mathbb{T}}; \mathbb{H})$: the set of all processes $v(\cdot)=\{v_k\}_{k=0}^{N}$ such that $v_k$ is $\mathcal{F}_{k-1}$-measurable for every $k\in\overline{\mathbb{T}}$ and
	\begin{flalign*}
		&\hspace*{-\leftmargin}
		\|v(\cdot)\|_{L^2_{\mathbb{F}}(\overline{\mathbb{T}}; \mathbb{H})}
		:=\left[\mathbb E\sum_{k=0}^{N}\|v_k\|_{\mathbb H}^2\right]^{1/2}<\infty.
		&
	\end{flalign*}
	Here $v_0$ is deterministic because $\mathcal F_{-1}=\{\emptyset,\Omega\}$, whereas $v_N$ is $\mathcal F_{N-1}$-measurable.
	
    \item
    $\mathbb{U}$: the set of all admissible controls
{\small
    \begin{flalign*}
    	&\hspace*{-\leftmargin}
    \mathbb{U} :=\Bigg\{\mathbf{u} = (u_0, u_1, \dots, u_{N-1})
    \,\Bigg|\,
    u_k \in \mathbb{R}^m, \\
    &\qquad \quad u_k \text{ is } \mathcal{F}_{k-1}\text{-measurable},\quad
    \mathbb{E}\left[ \sum_{k=0}^{N-1} |u_k|^2 \right] < \infty
    \Bigg\}.
    &
    \end{flalign*}
}
\end{itemize}

The continuation argument uses three product spaces. Their superscript $2$
indicates square integrability, not the number of components:
\begin{itemize}
	\item
	$N^2(\overline{\mathbb{T}}; \mathbb{R}^{2n})
	:= L^2_{\mathbb{F}}(\overline{\mathbb{T}}; \mathbb{R}^n)
	\times L^2_{\mathbb{F}}(\overline{\mathbb{T}}; \mathbb{R}^n)$.
	The product norm assigned to
	$(x(\cdot),y(\cdot))\in N^2(\overline{\mathbb T};\mathbb R^{2n})$ is
	\begin{flalign*}
		&\hspace*{-\leftmargin}
		\begin{aligned}
			&\|(x(\cdot), y(\cdot))\|_{N^2(\overline{\mathbb{T}}; \mathbb{R}^{2n})}\\
			&:= \left\{
			\mathbb{E}\left[
			\sum_{k=0}^{N} |x(k)|^2
			+ \sum_{k=0}^{N} |y(k)|^2
			\right]
			\right\}^{\frac{1}{2}}.
		\end{aligned}
		&
	\end{flalign*}
	
	\item
	$\mathcal{N}^2(\mathbb{T}; \mathbb{R}^{3n})
	:= L^2_{\mathbb{F}}(\mathbb{T}; \mathbb{R}^n)
	\times L^2_{\mathbb{F}}(\mathbb{T}; \mathbb{R}^n)
	\times L^2_{\mathbb{F}}(\mathbb{T}; \mathbb{R}^n)$.
	Writing $\rho=(\varphi,\psi,\gamma)$, equip
	$\mathcal N^2(\mathbb T;\mathbb R^{3n})$ with
	\begin{flalign*}
		&\hspace*{-\leftmargin}
		\begin{aligned}
			&\|\rho(\cdot)\|_{\mathcal{N}^2(\mathbb{T}; \mathbb{R}^{3n})}\\
			&:= \Bigg\{
			\mathbb{E}\Bigg[
			\sum_{k=0}^{N-1} |\varphi(k)|^2
			+ \sum_{k=0}^{N-1} |\psi(k)|^2 \\
			&\quad + \sum_{k=0}^{N-1} |\gamma(k)|^2
			\Bigg]
			\Bigg\}^{\frac{1}{2}}.
		\end{aligned}
		&
	\end{flalign*}
	
	\item
	$\mathcal{H}(\overline{\mathbb{T}})
	:= \mathbb{R}^n \times L^2_{\mathcal{F}_{N-1}}(\Omega; \mathbb{R}^n)
	\times \mathcal{N}^2(\mathbb{T}; \mathbb{R}^{3n})$.
	Here \(\xi\) and \(\eta\) represent, respectively, the initial and terminal forcing used during continuation.
	The forcing triple is measured through
{\small
	\begin{flalign*}
		&\hspace*{-\leftmargin}
		\begin{aligned}
			&\|(\xi, \eta, \rho(\cdot))\|_{\mathcal{H}(\overline{\mathbb{T}})}\\
			&:= \Bigg\{
			\|\xi\|^2 + \|\eta\|_{L^2_{\mathcal{F}_{N-1}}(\Omega; \mathbb{R}^n)}^2
			+ \|\rho(\cdot)\|_{\mathcal{N}^2(\mathbb{T}; \mathbb{R}^{3n})}^2
			\Bigg\}^{\frac{1}{2}}.
		\end{aligned}
		&
	\end{flalign*}
}
\end{itemize}

\subsection{Coupled mean-field recursion}
The theorem will be stated before the estimates used to prove it. We therefore
fix the complete coupled model and its structural hypotheses at the outset.

Mutual dependence is now restored: $b$ and $\sigma$ depend on $(y'_{k+1},z'_{k+1})$, whereas $f$ also depends on $x_k$. With $\theta_k=(x_k,y'_{k+1},z'_{k+1})$, the system reads
\begin{flalign}\label{11}
	&
	\begin{cases}
		x_{k+1} = b(k, \theta_k, \mathbb{E}[\theta_k]) + \sigma(k, \theta_k, \mathbb{E}[\theta_k]) w_k, \\
		y_k = -f(k+1, \theta_k, \mathbb{E}[\theta_k]), \\
		x_0 = \Lambda(y_0), \\
		y_N = \Phi(x_N, \mathbb{E}[x_N]),\quad k \in \mathbb{T}.
	\end{cases}
	&
\end{flalign}

Here $\theta_k=(x_k,y'_{k+1},z'_{k+1})$. The projections are defined below;
$z_{k+1}=y_{k+1}w_k$ is auxiliary and is not an additional unknown. For
$y_{k+1}\in L^2$ this product is in $L^1$ by Cauchy--Schwarz, but need not be
in $L^2$. Conditional Cauchy--Schwarz and the conditional variance normalization
give $z'_{k+1}\in L^2$. The backward sign follows the Hamiltonian convention.
All state variables and coefficient values lie in $\mathbb R^n$.

Throughout this section, for any solution pair $(x,y)$, we use the notation
\begin{flalign*}
&\begin{aligned}
\theta_k&=(x_k,y'_{k+1},z'_{k+1}),\\
y'_{k+1}&=\mathbb E[y_{k+1}\mid\mathcal F_{k-1}],\\
z'_{k+1}&=\mathbb E[y_{k+1}w_k\mid\mathcal F_{k-1}].
\end{aligned}&
\end{flalign*}
For two solution pairs, a hat denotes their difference. The coefficient hypotheses follow.

\begin{assumption}\label{ass:lipschitz}
	The maps $\Lambda:\mathbb R^n\to\mathbb R^n$, $\Phi:\Omega\times\mathbb R^n\times\mathbb R^n\to\mathbb R^n$, and the running coefficients are jointly measurable in their random and Euclidean arguments. There is a single measurable set of probability one on which all pointwise
Lipschitz inequalities hold simultaneously for every time and all Euclidean
arguments. All evaluated random variables are identified up to almost-sure equality.

	(i) For any $y\in\mathbb{R}^n$, $\Lambda(y)$ is deterministic. For any $x_1,x_2\in\mathbb R^n$, $\Phi(x_1,x_2)\in L^2_{\mathcal{F}_{N-1}}(\Omega;\mathbb{R}^n)$; for deterministic Euclidean arguments, each running coefficient at time $k$ is $\mathcal F_{k-1}$-measurable.
	Furthermore, for any $\theta_1, \theta_2 \in \mathbb{R}^{3n}$, set $\Gamma(k,\theta_1,\theta_2)=\bigl(f(k+1,\theta_1,\theta_2),\ b(k,\theta_1,\theta_2),\ \sigma(k,\theta_1,\theta_2)\bigr)$, then $\Gamma(\cdot,\theta_1,\theta_2)\in \mathcal{N}^2(\mathbb{T};\mathbb{R}^{3n})$. Moreover, $(\Lambda(0),\Phi(0,0),\linebreak
	\Gamma(\cdot,0,0))\in \mathcal{H}(\overline{\mathbb{T}})$.
	
	(ii) A single \(L>0\), independent of \(k\), bounds the increments of
	\(\Lambda,\Phi,\Gamma\): for all admissible arguments,
	\begin{flalign*}
		&
		\begin{cases}
			|\Lambda(y)-\Lambda(\bar{y})|\le L|\hat{y}|,\\
			|\Phi(x_1,x_2)-\Phi(\bar{x}_1,\bar{x}_2)|\le L\bigl(|\hat{x}_1|+|\hat{x}_2|\bigr),\\
			|h(k,\theta_1,\theta_2)-h(k,\bar{\theta}_1,\bar{\theta}_2)|\le L\bigl(|\hat{\theta}_1|+|\hat{\theta}_2|\bigr),\\
			|f(k+1,\theta_1,\theta_2)-f(k+1,\bar{\theta}_1,\bar{\theta}_2)|\le L\bigl(|\hat{\theta}_1|+|\hat{\theta}_2|\bigr),
		\end{cases}
		&
	\end{flalign*}
	where $h=b,\sigma$. Set $\hat{y}=y-\bar{y}$, with the same notation for $\hat{x}_1$, $\hat{x}_2$, $\hat{\theta}_1$, $\hat{\theta}_2$.

\end{assumption}

The domination structure is stated in centered and mean coordinates,
$X^{(1)}=X-\mathbb E[X]$ and $X^{(2)}=\mathbb E[X]$, so that the specified fluctuation and population matrix combinations can be controlled. Its matrices are deterministic, and the controlled combinations may be degenerate. In particular, for deterministic $P,Q$ the centered and mean components are orthogonal,
\begin{flalign*}
	&\quad
	\mathbb{E}\left\langle P(X-\mathbb{E}[X]),Q\mathbb{E}[X]\right\rangle=0,
	&
\end{flalign*}
an identity used repeatedly below. Section~\ref{sec:application} supplies a Hamiltonian realization.

\begin{assumption}\label{ass:dm}
	There exist two constants $\mu\geq0$, $\nu\geq0$, deterministic matrices $M\in\mathbb{R}^{m_1\times n}$, $G,\widetilde{G}\in\mathbb{R}^{m_2\times n}$, and deterministic bounded matrix sequences
	\begin{flalign*}
		&\quad
	    A_k,\widetilde{A}_k,B_k,\widetilde{B}_k,C_k,\widetilde{C}_k\in\mathbb{R}^{m_3\times n},\qquad k\in\mathbb{T},
		&
	\end{flalign*}
	where $m_1,m_2,m_3\in\mathbb{N}$ are given, such that $\mu+\nu>0$. For any square-integrable random variable $X$, define $X^{(1)}:=X-\mathbb{E}[X]$ and $X^{(2)}:=\mathbb{E}[X]$. For $X,Y,Z$ and $\bar{X},\bar{Y},\bar{Z}$, set $\widehat{X}=X-\bar{X}$, $\widehat{Y}=Y-\bar{Y}$, $\widehat{Z}=Z-\bar{Z}$, and $\widehat{X}^{(1)}:=\widehat{X}-\mathbb{E}[\widehat{X}]$, $\widehat{X}^{(2)}:=\mathbb{E}[\widehat{X}]$, with the same notation for $\widehat{Y}$ and $\widehat{Z}$.
	
	(i) Either ``Case A: $\mu>0$, $\nu=0$'', or ``Case B: $\mu=0$, $\nu>0$''. This unilateral condition ensures that the equation at $\alpha=0$ in the continuation method is decoupled and hence directly solvable.
	
	(ii) (Domination conditions)\\
	If $\mu>0$ and $\nu=0$, then for all $y,\bar{y}\in\mathbb{R}^n$,
	\begin{flalign*}
		&\quad
		|\Lambda(y)-\Lambda(\bar{y})|
		\leq \frac{1}{\mu}|M(y-\bar{y})|,
		&
	\end{flalign*}
	and, for $h=b,\sigma$, for all $\Theta=(X,Y,Z)$ and $\bar\Theta=(X,\bar Y,\bar Z)$ in $L^2(\Omega;\mathbb{R}^{3n})$, with the same first component $X$:
	\begin{flalign*}
		&\quad
		\begin{aligned}
			&\mathbb{E}\left[|h(k,\Theta,\mathbb{E}[\Theta])-h(k,\bar\Theta,\mathbb{E}[\bar\Theta])|^2\right]\\
			&\leq \frac{1}{\mu^2}\mathbb{E}\left[|B_k\widehat{Y}^{(1)}+C_k\widehat{Z}^{(1)}+
			\widetilde{B}_k\widehat{Y}^{(2)}+\widetilde{C}_k\widehat{Z}^{(2)}|^2\right].
		\end{aligned}
		&
	\end{flalign*}
	If $\nu>0$ and $\mu=0$, then for all $X,\bar X\in L^2(\Omega;\mathbb{R}^n)$,
	\begin{flalign*}
		&\quad
		\begin{aligned}
			&\mathbb{E}\left[|\Phi(X,\mathbb{E}[X])-\Phi(\bar X,\mathbb{E}[\bar X])|^2\right]\\
			&\leq \frac{1}{\nu^2}\mathbb{E}\left[|G\widehat{X}^{(1)}+\widetilde{G}\widehat{X}^{(2)}|^2\right],
		\end{aligned}
		&
	\end{flalign*}
	and, for $\Theta=(X,Y,Z)$ and $\bar\Theta=(\bar X,Y,Z)$,
	\begin{flalign*}
		&\quad
		\begin{aligned}
		&\mathbb{E}\left[|f(k+1,\Theta,\mathbb{E}[\Theta])-f(k+1,\bar\Theta,\mathbb{E}[\bar\Theta])|^2\right]\\
		&\leq \frac{1}{\nu^2}\mathbb{E}\left[|A_k\widehat{X}^{(1)}+\widetilde{A}_k\widehat{X}^{(2)}|^2\right].
		\end{aligned}
		&
	\end{flalign*}
	
	(iii) (Monotonicity conditions) For all $\Theta=(X,Y,Z)$, $\bar\Theta=(\bar X,\bar Y,\bar Z)\in L^2(\Omega;\mathbb{R}^{3n})$ and $y,\bar y\in\mathbb{R}^n$, a single one of the following two alternatives is assumed globally, with the same choice for every time and every pair of arguments.\\
	\emph{First monotonicity alternative:}
	\begin{flalign*}
		\begin{cases}
			&\langle\Lambda(y)-\Lambda(\bar y),y-\bar y\rangle\leq-\mu|M(y-\bar y)|^2,\\[0.5em]
			&\mathbb{E}\left[\left\langle\Phi(X,\mathbb{E}[X])-\Phi(\bar X,\mathbb{E}[\bar X]),\widehat X\right\rangle\right]\\
			&\quad\geq \nu\mathbb{E}\left[|G\widehat X^{(1)}+\widetilde G\widehat X^{(2)}|^2\right],\\[0.5em]
			&\mathbb{E}\left[\left\langle\Gamma(k,\Theta,\mathbb{E}[\Theta])-\Gamma(k,\bar\Theta,\mathbb{E}[\bar\Theta]),\widehat\Theta\right\rangle\right]\\
			&\quad\leq-\nu\mathbb{E}\left[|A_k\widehat X^{(1)}+\widetilde A_k\widehat X^{(2)}|^2\right]\\
			&\quad-\mu\mathbb{E}\left[|B_k\widehat Y^{(1)}+C_k\widehat Z^{(1)}+
			\widetilde B_k\widehat Y^{(2)}+\widetilde C_k\widehat Z^{(2)}|^2\right].
		\end{cases}
	\end{flalign*}\\
	\emph{Second monotonicity alternative:}
	\begin{flalign*}
		\begin{cases}
		&\langle\Lambda(y)-\Lambda(\bar y),y-\bar y\rangle\geq\mu|M(y-\bar y)|^2,\\[0.5em]
		&\mathbb{E}\left[\left\langle\Phi(X,\mathbb{E}[X])-\Phi(\bar X,\mathbb{E}[\bar X]),\widehat X\right\rangle\right]\\
		&\quad\leq-\nu\mathbb{E}\left[|G\widehat X^{(1)}+\widetilde G\widehat X^{(2)}|^2\right],\\[0.5em]
		&\mathbb{E}\left[\left\langle\Gamma(k,\Theta,\mathbb{E}[\Theta])-\Gamma(k,\bar\Theta,\mathbb{E}[\bar\Theta]),\widehat\Theta\right\rangle\right]\\
		&\quad\geq\nu\mathbb{E}\left[|A_k\widehat X^{(1)}+\widetilde A_k\widehat X^{(2)}|^2\right]\\
		&\quad+\mu\mathbb{E}\left[|B_k\widehat Y^{(1)}+C_k\widehat Z^{(1)}+
		\widetilde B_k\widehat Y^{(2)}+\widetilde C_k\widehat Z^{(2)}|^2\right].
		\end{cases}
	\end{flalign*}
	Here
	\begin{flalign*}
		&\quad
		\begin{aligned}
		&\left\langle\Gamma(k,\Theta,\mathbb{E}[\Theta])-\Gamma(k,\bar\Theta,\mathbb{E}[\bar\Theta]),\widehat\Theta\right\rangle\\
		&:=\left\langle f(k+1,\Theta,\mathbb{E}[\Theta])-f(k+1,\bar\Theta,\mathbb{E}[\bar\Theta]),\widehat X\right\rangle\\
		&\quad+\left\langle b(k,\Theta,\mathbb{E}[\Theta])-b(k,\bar\Theta,\mathbb{E}[\bar\Theta]),\widehat Y\right\rangle\\
		&\quad+\left\langle \sigma(k,\Theta,\mathbb{E}[\Theta])-\sigma(k,\bar\Theta,\mathbb{E}[\bar\Theta]),\widehat Z\right\rangle.
		\end{aligned}
		&
	\end{flalign*}
	
	(iv) (Structural size condition) All matrix norms below are understood as operator norms. For the
	deterministic matrix sequences in Assumption~\ref{ass:dm}, set
	\begin{flalign*}
		&\quad
		\begin{aligned}
			\kappa_M &:= \|M\|,\\
			\kappa_G &:= \sqrt{2\bigl(\|G\|^2+\|\widetilde G\|^2\bigr)},\\
			\kappa_A &:= \sqrt{2\bigl(\|A\|_\infty^2+
				\|\widetilde A\|_\infty^2\bigr)},\\
			\kappa_B &:= \sqrt{2\bigl(\|B\|_\infty^2+
				\|\widetilde B\|_\infty^2\bigr)},\\
			\kappa_C &:= \sqrt{2\bigl(\|C\|_\infty^2+
				\|\widetilde C\|_\infty^2\bigr)},
		\end{aligned}
		&
	\end{flalign*}
	where
	\begin{flalign*}
		&\quad
		\|A\|_\infty:=\sup_{k\in\mathbb T}\|A_k\|,
		\quad
		\|\widetilde A\|_\infty:=\sup_{k\in\mathbb T}\|\widetilde A_k\|,
		&
	\end{flalign*}
	and the other norms \(\|B\|_\infty\), \(\|\widetilde B\|_\infty\),
	\(\|C\|_\infty\), and \(\|\widetilde C\|_\infty\) are defined similarly.\\
	In Case A, namely \(\mu>0\) and \(\nu=0\), we assume that
	\begin{flalign*}
		&\quad
		\mu^2 \max\{\kappa_M,\kappa_B,\kappa_C\}\le 1.
		&
	\end{flalign*}
	In Case B, namely \(\mu=0\) and \(\nu>0\), we assume that
	\begin{flalign*}
		&\quad
		\nu^2 \max\{\kappa_G,\kappa_A\}\le 1.
		&
	\end{flalign*}
	These bounds normalize the active parameter so that the same domination
constants are valid along continuation. They impose no additional smallness
restriction on coefficients already satisfying (i)--(iii); see
Lemma~\ref{lem:normalization}. Absorption of solution energies is a separate
step in Lemma~\ref{lem:apriori}.
\end{assumption}

\begin{remark}[Choice of one coercivity direction]\label{rem:unilateral}
The proof retains one active parameter to make the continuation anchor
sequentially solvable: backward then forward in Case~A, and forward then
backward in Case~B. This does not exclude systems satisfying stronger conditions
with both $\mu_0,\nu_0>0$. Suppose such a system satisfies the corresponding
domination inequalities on both sides and the displayed monotonicity inequality
with both coercive terms. Setting $\nu=0$ and retaining $0<\mu\le\mu_0$
preserves Case~A domination, terminal monotonicity, and running monotonicity;
the discarded term has the favorable sign. Alternatively, set $\mu=0$ and
retain $0<\nu\le\nu_0$. The same observation applies to the second orientation.
Keeping both parameters in the particular anchor~\eqref{17} would leave a
coupled base system, but this is unnecessary for applying the present sufficient
conditions.

Discarding one direction also preserves the remaining hypotheses, so the
reduction is not merely a formal device. Dropping the $\nu$-terms only deletes
$\nu$-domination and the two $\nu$-monotonicity inequalities, which are not
required in Case~A; the $\mu$-inequalities are inherited unchanged. The
structural size condition~(iv) is then still available: after shrinking the
active parameter to any
$c\le\min\{\mu_0,(1+H)^{-1/2}\}$ with $H=\max\{\kappa_M,\kappa_B,\kappa_C\}$
the domination factor $1/c$ widens, the required coercivity $c$ decreases, and
the terminal bound is preserved, so Lemma~\ref{lem:normalization} applies and
Assumption~\ref{ass:dm}(i)--(iv) holds with the same coefficients. The
symmetric statement holds for Case~B with $H=\max\{\kappa_G,\kappa_A\}$. Only
the constants in the solution estimates change.
\end{remark}

\begin{lemma}[Normalization of the active parameter]\label{lem:normalization}
Suppose Assumption~\ref{ass:dm}(i)--(iii) holds with fixed bounded matrices
and an active parameter $c_0>0$. In Case~A let
$H=\max\{\kappa_M,\kappa_B,\kappa_C\}$; in Case~B let
$H=\max\{\kappa_G,\kappa_A\}$. For any
\[
0<c\le\min\{c_0,(1+H)^{-1/2}\},
\]
the same coefficients and matrices satisfy (i)--(iv) with active parameter $c$.
\end{lemma}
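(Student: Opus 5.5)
Fix Case~A; Case~B is handled by the same argument with $(M,B,C)$ replaced by $(G,A)$ and $\mu$ by $\nu$. Suppose (i)--(iii) hold with $\mu=c_0>0$ and $\nu=0$, and take $0<c\le\min\{c_0,(1+H)^{-1/2}\}$. I will check (i)--(iv) one by one with $\mu=c$, keeping the same matrices $M,B_k,C_k,\widetilde B_k,\widetilde C_k$.

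Condition (i) is immediate, because $c>0$ and $\nu$ stays $0$.

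For the domination condition (ii), the key fact is that $c\le c_0$ gives $1/c_0\le 1/c$, and hence $1/c_0^2\le 1/c^2$. This yields
\[
|\Lambda(y)-\Lambda(\bar y)|\le \tfrac1{c_0}|M\hat y|\le \tfrac1c|M\hat y|.
\]
The same reasoning applies to the squared bounds for $h=b,\sigma$: their right-hand sides are nonnegative expectations, and multiplying them by the larger factor $1/c^2$ keeps the inequalities true.

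For the monotonicity condition (iii), I keep whichever alternative was assumed. In the first alternative I use
\[
-c_0|M\hat y|^2\le -c|M\hat y|^2
\quad\text{and}\quad
-c_0\,\mathbb E|\cdot|^2\le -c\,\mathbb E|\cdot|^2 .
\]
Both follow because the squared quantities are nonnegative and $c\le c_0$. The $\nu$-terms are zero on both sides, so the terminal inequality reduces to $\mathbb E\langle\cdot\rangle\ge0$, which is the same statement as before. The second alternative is symmetric, with $c_0|\cdot|^2\ge c|\cdot|^2$.

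For the size condition (iv), the matrices are fixed and so $H$ is unchanged. From $c^2\le (1+H)^{-1}$ I get
\[
c^2H\le \frac{H}{1+H}<1 ,
\]
which is $\mu^2\max\{\kappa_M,\kappa_B,\kappa_C\}\le1$. In Case~B the same computation applies with $\kappa_G,\kappa_A$.

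The argument is elementary. The only point needing care is the direction of each monotone change. Shrinking the parameter weakens coercivity, which is fine because the required lower bound drops. It also weakens domination in the right way, because $1/c$ grows and the required upper bound becomes easier to meet. This has to be checked for the sign convention of each alternative. I also need to confirm that $H$ depends only on the matrices and not on $\mu$, so that no circularity enters through (iv).
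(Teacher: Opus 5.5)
Your proof is correct and matches the paper's argument: shrinking the active parameter to $c$ enlarges the domination factor $1/c$ and weakens the required coercivity in either monotonicity orientation, and $c^2H\le H/(1+H)\le 1$ gives (iv). Your version only writes out each inequality case by case, where the paper states this in two lines.
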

\begin{proof}
Reducing $c_0$ to $c$ increases the domination factor $1/c$ and decreases the
magnitude of the required coercivity $c$. Thus all inequalities are preserved
in either orientation. Also $c^2H\le H/(1+H)\le1$. This choice changes the
constants in the solution estimates; no uniform bound as $c\downarrow0$ is claimed.
\end{proof}

\begin{remark}[Reduction of the second monotonicity alternative]\label{rem:sign}
	Only the first alternative must be proved directly. The involution below
	converts the second one into it.\\
	Define
	\begin{flalign*}
		&
		\mathcal S(x,y,z)=(x,-y,-z).
		&
	\end{flalign*}
	For coefficients in the second orientation, set
	\begin{flalign*}
		&
		\widetilde\Lambda(y):=\Lambda(-y),\qquad
		\widetilde\Phi(x,\bar x):=-\Phi(x,\bar x),
		&
	\end{flalign*}
	and
	\begin{flalign*}
		&
		\begin{aligned}
			\widetilde\Gamma(k,\theta,\bar\theta)
			&:=
			\begin{pmatrix}
				\widetilde f(k+1,\theta,\bar\theta)\\
				\widetilde b(k,\theta,\bar\theta)\\
				\widetilde\sigma(k,\theta,\bar\theta)
			\end{pmatrix}\\&
			:=
			\begin{pmatrix}
				- f(k+1,\mathcal S\theta,\mathcal S\bar\theta)\\
				b(k,\mathcal S\theta,\mathcal S\bar\theta)\\
				\sigma(k,\mathcal S\theta,\mathcal S\bar\theta)
			\end{pmatrix}.
		\end{aligned}
		&
	\end{flalign*}
	For
	\begin{flalign*}
		&
		(\widetilde x_k,\widetilde y_k):=(x_k,-y_k),
		&
	\end{flalign*}
	linearity of both projections gives
	\begin{flalign*}
		&
		\widetilde y'_{k+1}=-y'_{k+1},\qquad
		\widetilde z'_{k+1}=-z'_{k+1}.
		&
	\end{flalign*}
	The transformed MF-FBS$\Delta$E therefore has coefficients
	\begin{flalign*}
		&
		(\widetilde\Lambda,\widetilde\Phi,\widetilde\Gamma)
		&
	\end{flalign*}
	and is solved by
	\begin{flalign*}
		&
		(\widetilde x(\cdot),\widetilde y(\cdot))
		=(x(\cdot),-y(\cdot))
		&
	\end{flalign*}
	if and only if $(x(\cdot),y(\cdot))$ solves the original system. For the initial map,
	\begin{flalign*}
		&
		\begin{aligned}
			&\left\langle
			\widetilde\Lambda(y)-\widetilde\Lambda(\bar y),
			y-\bar y
			\right\rangle\\
			&=
			-\left\langle
			\Lambda(-y)-\Lambda(-\bar y),
			(-y)-(-\bar y)
			\right\rangle  \\
			&\leq -\mu |M(y-\bar y)|^2 .
		\end{aligned}
		&
	\end{flalign*}
	For the terminal map,
	\begin{flalign*}
		&
		\begin{aligned}
			&\mathbb E
			\left[
			\left\langle
			\widetilde\Phi(X,\mathbb E[X])
			-\widetilde\Phi(\bar X,\mathbb E[\bar X]),
			X-\bar X
			\right\rangle
			\right]  \\
			&
			=
			-\mathbb E
			\left[
			\left\langle
			\Phi(X,\mathbb E[X])
			-\Phi(\bar X,\mathbb E[\bar X]),
			X-\bar X
			\right\rangle
			\right]  \\
			&
			\geq
			\nu\,
			\mathbb E
			\left[
			|G\widehat X^{(1)}+\widetilde G\widehat X^{(2)}|^2
			\right].
		\end{aligned}
		&
	\end{flalign*}
	Moreover, since
	\begin{flalign*}
		&
		\mathcal S\widehat\Theta=(\widehat X,-\widehat Y,-\widehat Z),
		&
	\end{flalign*}
	the $\Gamma$ pairing changes sign as well. In Case~A, sign changes in the
	\(Y,Z\) domination expression disappear after squaring; in Case~B the same is
	true for the transformed \(f\)-difference. The bounds for
	\(\widetilde\Lambda\) and \(\widetilde\Phi\) are identical to the original
	ones, and \(\mathcal S\) preserves Lipschitz and integrability properties.
	Hence the transformed coefficients satisfy the first alternative. After
	applying Theorem~\ref{thm:main}, invert the change by
	\begin{flalign*}
		&
		x_k=\widetilde x_k,\qquad y_k=-\widetilde y_k .
		&
	\end{flalign*}
	No parity assumption on the coefficients is used.
\end{remark}

\section{Main Results}\label{sec:results}
\subsection{Principal solvability statement}
The following sufficient solvability result is proved in Section~\ref{sec:proofs}.

\begin{theorem}\label{thm:main}
Suppose $(\Lambda,\Phi,\Gamma)$ obey Assumptions~\ref{ass:lipschitz}
and~\ref{ass:dm}. Then~\eqref{11} is well posed in
$N^2(\overline{\mathbb T};\mathbb R^{2n})$: there is one and only one pair
$(x(\cdot),y(\cdot))$ in this space, and it satisfies
\begin{flalign}\label{12}
&\mathbb E\left[\sum_{k=0}^{N}|x_k|^2+\sum_{k=0}^{N}|y_k|^2\right]\le K\mathbb E[I],&
\end{flalign}
where
\begin{flalign}\label{13}
&\begin{aligned}
I={}&|\Phi(0,0)|^2+|\Lambda(0)|^2
+\sum_{k=0}^{N-1}|b(k,0,0)|^2\\
&+\sum_{k=0}^{N-1}\bigl(|\sigma(k,0,0)|^2+|f(k+1,0,0)|^2\bigr).
\end{aligned}&
\end{flalign}
Here $K>0$ is determined solely by $N$, the common Lipschitz bound,
$\mu,\nu$, and the deterministic matrix bounds in Assumption~\ref{ass:dm}.

More generally, let $(\bar x,\bar y)\in
N^2(\overline{\mathbb T};\mathbb R^{2n})$ be a solution of~\eqref{11}
for another coefficient set $(\bar\Lambda,\bar\Phi,\bar\Gamma)$.
Assume that the evaluated coefficient differences in~\eqref{15} define a
source triple in $\mathcal H(\overline{\mathbb T})$: the initial difference
is deterministic, the terminal difference is $\mathcal F_{N-1}$-measurable,
and the running differences are $\mathcal F_{k-1}$-measurable and square integrable.
The second set need not have the same domination matrices or monotonicity
orientation. Then $(\bar x,\bar y)$ and $(x,y)$ satisfy
\begin{flalign}\label{14}
&\mathbb E\left[\sum_{k=0}^{N}|\widehat x_k|^2+\sum_{k=0}^{N}|\widehat y_k|^2\right]\le K\mathbb E[\widehat I],&
\end{flalign}
where
\begin{flalign}\label{15}
&\begin{aligned}
\widehat I={}&|\Phi(\bar x_N,\mathbb E[\bar x_N])-\bar\Phi(\bar x_N,\mathbb E[\bar x_N])|^2\\
&+|\Lambda(\bar y_0)-\bar\Lambda(\bar y_0)|^2\\
&+\sum_{k=0}^{N-1}|b(k,\bar\theta_k,\mathbb E[\bar\theta_k])-\bar b(k,\bar\theta_k,\mathbb E[\bar\theta_k])|^2\\
&+\sum_{k=0}^{N-1}|\sigma(k,\bar\theta_k,\mathbb E[\bar\theta_k])-\bar\sigma(k,\bar\theta_k,\mathbb E[\bar\theta_k])|^2\\
&+\sum_{k=0}^{N-1}|f(k+1,\bar\theta_k,\mathbb E[\bar\theta_k])-\bar f(k+1,\bar\theta_k,\mathbb E[\bar\theta_k])|^2.
\end{aligned}&
\end{flalign}
Here $\widehat x=x-\bar x$, $\widehat y=y-\bar y$, and $\bar\theta_k=(\bar x_k,\bar y'_{k+1},\bar z'_{k+1})$.
\end{theorem}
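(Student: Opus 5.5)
By Remark~\ref{rem:sign} it suffices to treat the first monotonicity alternative. I would use the method of continuation in the homotopy parameter $\alpha\in[0,1]$.

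\textbf{The family.} For $\alpha\in[0,1]$ and a forcing triple $(\xi,\eta,\rho)\in\mathcal H(\overline{\mathbb T})$, consider the family of MF-FBS$\Delta$Es whose coefficients interpolate between the true ones and a decoupled anchor. In Case~A:
\begin{itemize}
\item initial map $\alpha\Lambda(y)-(1-\alpha)\mu M^\top My+\xi$;
\item running coefficients $\alpha b-(1-\alpha)\mu(B_k,C_k,\widetilde B_k,\widetilde C_k)^\top(\cdots)+\varphi$, and similarly for $\sigma$, so that the anchor drift is $-\mu$ times the adjoint of the combination $B_k\widehat Y^{(1)}+C_k\widehat Z^{(1)}+\widetilde B_k\widehat Y^{(2)}+\widetilde C_k\widehat Z^{(2)}$;
\item $f$ and $\Phi$ multiplied by $\alpha$ plus forcing.
\end{itemize}
Case~B is symmetric, with $\nu$-anchors placed in $\Phi$ and $f$ built from $G,\widetilde G$ and $A_k,\widetilde A_k$. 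At $\alpha=0$ the system decouples. In Case~A one first solves the backward part, because $y_N=\eta$ and $y_k$ is given explicitly in terms of the future. Then $x_0$ and the forward recursion follow, since the anchor depends only on $(y',z')$. In Case~B one solves forward first, then backward. This settles solvability at $\alpha=0$.

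\textbf{Key a priori estimate.} The central claim, uniform in $\alpha$, is that two solutions with forcings $(\xi,\eta,\rho)$ and $(\bar\xi,\bar\eta,\bar\rho)$ satisfy
\[
\mathbb E\Big[\sum_k|\widehat x_k|^2+|\widehat y_k|^2\Big]\le K\,\|(\widehat\xi,\widehat\eta,\widehat\rho)\|^2_{\mathcal H}.
\]
I would prove this in three steps.
\begin{enumerate}
\item \emph{Duality identity.} Expand $\mathbb E\langle\widehat x_N,\widehat y_N\rangle-\langle\widehat x_0,\widehat y_0\rangle$ as a telescoping sum. Using $\mathcal F_{k-1}$-measurability of the coefficients, the sum becomes $\mathbb E\langle\widehat b,\widehat y'_{k+1}\rangle+\mathbb E\langle\widehat\sigma,\widehat z'_{k+1}\rangle+\mathbb E\langle\widehat f,\widehat x_k\rangle$. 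This is exactly the $\Gamma$-pairing of Assumption~\ref{ass:dm}(iii) plus source terms.
\item \emph{Coercive bound.} Insert monotonicity for both the true and the anchor parts. Orthogonality of centered and mean components makes the anchor contribute $-(1-\alpha)\mu|\cdot|^2$. Together with $-\alpha\mu|\cdot|^2$, this gives a total coercivity of $\mu$, independent of $\alpha$:
\[
\mu\,\mathbb E\Big[|M\widehat y_0|^2+\sum_k|B_k\widehat Y^{(1)}+C_k\widehat Z^{(1)}+\cdots|^2\Big]\le\text{source pairings}.
\]
\item \emph{Companion one-sided estimate.} In Case~A, the backward equation gives $\mathbb E|\widehat y_k|^2$, $\mathbb E|\widehat y'|^2$ and $\mathbb E|\widehat z'|^2$ bounded by $C\,\mathbb E|\widehat x|^2$ plus the source. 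This uses Lipschitz continuity and the conditional projection bound $|z'|^2\le\mathbb E[|y|^2\mid\mathcal F_{k-1}]$, iterated backward over the finitely many steps. The forward equation gives $\mathbb E|\widehat x|^2$ bounded by $C$ times the frozen-$x$ differences in $b,\sigma$ and $\Lambda$ plus the source. By domination (ii) and the normalization (iv), these differences are at most $\mu^{-2}$ times the coercive quantities of step 2, with constants uniform in $\alpha$. In Case~B the roles of forward and backward are interchanged.
\end{enumerate}
Combining the three steps yields $\text{energy}\le C\mu^{-2}\cdot\text{pairing}$. The pairing is bounded by $\|\widehat{\text{source}}\|\cdot\text{energy}^{1/2}$, and Young's inequality then closes the estimate.

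\textbf{Continuation step and conclusion.} With the estimate in hand, fix $\alpha_0$ for which the family is solvable for all forcings. For $\alpha=\alpha_0+\delta$, rewrite the equation as the $\alpha_0$-system with forcing shifted by $\delta$ times (true minus anchor coefficients) evaluated at a given pair. The estimate together with the Lipschitz bounds makes this map a contraction for $\delta\le\delta_0$, where $\delta_0$ is independent of $\alpha_0$. Finitely many steps reach $\alpha=1$, giving existence; uniqueness is the estimate with zero source difference. Bound~\eqref{12} follows by comparing with the solution of the system whose coefficients are those of $\Gamma(\cdot,0,0)$-type constants. For the stability estimate~\eqref{14}, view $(\bar x,\bar y)$ as a solution of the $(\Lambda,\Phi,\Gamma)$-system at $\alpha=1$ with forcing equal to the evaluated coefficient differences; this is legitimate because of the assumed $\mathcal H$-membership. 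Then apply the a priori estimate.

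The main obstacle I expect is step 3: the frozen-difference domination must be applied to differences in which only some arguments vary. I would split $b(\theta)-b(\bar\theta)$ into an $x$-varying part, controlled by Lipschitz continuity and Gronwall-type iteration, and a $(y',z')$-varying part, controlled by domination. These estimates must be chained in the right order, without circularity, so that the constants stay independent of $\alpha$.
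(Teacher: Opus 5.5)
Your proposal is correct and follows the paper's proof essentially step for step. The shared steps are: the sign reduction via Remark~\ref{rem:sign}; the anchored homotopy~\eqref{17}, which decouples at $\alpha=0$; an $\alpha$-uniform a priori estimate built from the telescoping duality identity, the total coercivity $\mu$, and the frozen one-sided bounds of Lemmas~\ref{lem:forward}--\ref{lem:backward}, with domination producing the $\mu^{-2}Q_A$ term; then the uniform-step contraction and perturbation sources for~\eqref{14}. Two small precisions: in the closure you should keep the source term, i.e.\ $S\le C(D+\mu^{-2}Q_A)$ together with $\mu Q_A\le\sqrt{2DS}$ before applying Young's inequality, and for~\eqref{12} the clean comparison, as in the paper, is that the zero pair solves the $\alpha=1$ system with sources $-\Lambda(0)$, $-\Phi(0,0)$, $-\Gamma(\cdot,0,0)$.
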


\begin{corollary}[Reduction to the non-mean-field setting]\label{cor:degenerate}
Suppose the coefficients carry no mean-field coupling: $b,\sigma,f$ do not
depend on $\mathbb E[\theta_k]$ and $\Phi$ does not depend on $\mathbb E[x_N]$.
Then $\widetilde A_k=\widetilde B_k=\widetilde C_k=0$ and $\widetilde G=0$
satisfy Assumption~\ref{ass:dm}(ii)--(iii) if and only if the centered
inequalities
\begin{flalign*}
&\quad
\begin{aligned}
|\Lambda(y)-\Lambda(\bar y)|
&\le \tfrac1\mu|M(y-\bar y)|,\\
\mathbb E|h(k,\Theta)-h(k,\bar\Theta)|^2
&\le \tfrac1{\mu^2}\mathbb E|B_k\widehat Y^{(1)}+C_k\widehat Z^{(1)}|^2,\\
&\qquad h=b,\sigma,
\end{aligned}
&
\end{flalign*}
hold whenever $\mu>0$ (with $y'_{k+1},z'_{k+1}$ still given by the conditional
projections), and system~\eqref{11} becomes a fully coupled nonlinear
FBS$\Delta$E without mean-field terms. Under these conditions
Theorem~\ref{thm:main} reduces to the solvability, a priori bound, and
coefficient-stability statement of the non-mean-field domination--monotonicity
framework of~\cite{Niu2026a}: once the mean-field blocks vanish, the displayed
deterministic-matrix conditions are exactly the centered form of the domination
and monotonicity inequalities used there, and the size condition~(iv) is simply
the normalization of the single active parameter supplied by
Lemma~\ref{lem:normalization}. Consequently the mean-field terms
$\widetilde B_k,\widetilde C_k,\widetilde G$ are exactly the additional degrees
of freedom that the present theory adds to \cite{Niu2026a}, and
Example~\ref{ex:nonlinear} exhibits a system in which they are nonzero and the
extra generality is used.
\end{corollary}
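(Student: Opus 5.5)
The statement immediately above is Corollary~\ref{cor:degenerate}, so the plan is to verify an equivalence of hypotheses and then invoke Theorem~\ref{thm:main}. I would split the argument into three parts: (a) the ``if and only if'' between Assumption~\ref{ass:dm}(ii)--(iii) with vanishing tilde blocks and the centered inequalities; (b) the identification of~\eqref{11} with a non-mean-field FBS$\Delta$E; (c) the reduction of the conclusions of Theorem~\ref{thm:main}, with (iv) supplied by Lemma~\ref{lem:normalization}.

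For (a), substitute $\widetilde A_k=\widetilde B_k=\widetilde C_k=0$ and $\widetilde G=0$ into Assumption~\ref{ass:dm}(ii). The right-hand sides then read $\mu^{-2}\mathbb E|B_k\widehat Y^{(1)}+C_k\widehat Z^{(1)}|^2$, $\nu^{-2}\mathbb E|G\widehat X^{(1)}|^2$ and $\nu^{-2}\mathbb E|A_k\widehat X^{(1)}|^2$. Since the coefficients do not depend on $\mathbb E[\theta_k]$, the left-hand sides become $\mathbb E|h(k,\Theta)-h(k,\bar\Theta)|^2$. Thus the displayed inequalities are literally Assumption~\ref{ass:dm}(ii) rewritten, and both directions of the equivalence are immediate. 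The same substitution in (iii) gives the centered monotonicity inequalities.

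Two side points in (a) need care.
\begin{itemize}
\item The equivalence holds only because the inequalities are quantified over all random arguments $\Theta,\bar\Theta\in L^2$. If they were stated pointwise, centering would make them genuinely different.
\item Centering loses nothing in the Case~A setting. By the orthogonality identity, $\mathbb E|P\widehat Y|^2=\mathbb E|P\widehat Y^{(1)}|^2+|P\widehat Y^{(2)}|^2$, so a general $\widetilde B_k$ is an extra degree of freedom. Choosing $\widetilde B_k=B_k$ recovers the uncentered form.
\end{itemize}
This orthogonality check is the point I would spell out explicitly. It is also what justifies the phrase ``additional degrees of freedom'' in the corollary.

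For (b), the recursion~\eqref{11} is unchanged except that the $\mathbb E[\theta_k]$ and $\mathbb E[x_N]$ arguments are dummies. The projections $y'_{k+1}$ and $z'_{k+1}$ are defined exactly as before, so~\eqref{11} is a fully coupled nonlinear FBS$\Delta$E of the type studied in~\cite{Niu2026a}.

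For (c), Lemma~\ref{lem:normalization} lets us shrink the active parameter so that (iv) holds without any new restriction. Theorem~\ref{thm:main} then yields existence, uniqueness, the bounds~\eqref{12}--\eqref{14}, and coefficient stability.

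The main obstacle is the claim of exact agreement with the hypotheses of~\cite{Niu2026a}. The results of the present paper cannot settle that comparison, since it depends on how~\cite{Niu2026a} phrases its conditions. I would therefore present the comparison as a matching of forms: the centered deterministic-matrix inequalities, the same conditional projections, and the same single-direction continuation anchor. I would not attempt a formal proof of identity. The rigorous content of the corollary is parts (a)--(c), which follow directly from Theorem~\ref{thm:main}.
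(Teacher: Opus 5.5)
Your parts (a)--(c) are the paper's own proof, nearly step for step. You substitute the zero mean blocks into Assumption~\ref{ass:dm}(ii)--(iii), note that~\eqref{11} keeps the same conditional projections but loses its mean arguments, and apply Theorem~\ref{thm:main}. The paper instead cites the preservation, a priori and continuation lemmas ``with the same constants''; your route through Lemma~\ref{lem:normalization} is equivalent. The equivalence in (a) is indeed a literal rewrite.

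\textbf{The gap.} Your side remark ``centering loses nothing in Case~A'' is false for the zero-block choice. So is the claim in (b) that the reduced system is fully coupled and of the type in~\cite{Niu2026a}. The paper's proof passes over the same point.
\begin{itemize}
\item \emph{Case~A domination forces decoupling.} Test it with deterministic arguments $X=x$, $Y=y$, $Z=z$, $\bar Y=\bar y$, $\bar Z=\bar z$. Then $\widehat Y^{(1)}=\widehat Z^{(1)}=0$, so with $\widetilde B_k=\widetilde C_k=0$ the right-hand side vanishes and $b(k,x,y,z)=b(k,x,\bar y,\bar z)$ almost surely, and likewise for $\sigma$. Using the common Lipschitz null set and a countable dense set of arguments, $b$ and $\sigma$ do not depend on $(y,z)$ at all.
\item \emph{Monotonicity then kills the matrices.} Take $\widehat X=0$ in running monotonicity. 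The left-hand side is now zero, so $\mathbb E|B_k\widehat Y^{(1)}+C_k\widehat Z^{(1)}|^2\le0$ for every centered increment. Taking $\widehat Y=vw_0$ forces $B_k=0$, and similarly $C_k=0$.
\item \emph{Case~B is symmetric.} The zero blocks force $\Phi$ and $f$ to be independent of $x$, and $G=A_k=0$.
\end{itemize}
So with vanishing mean blocks, the forward drift and diffusion never see $(y'_{k+1},z'_{k+1})$ in Case~A, and the backward equation never sees $x$ in Case~B. The system is not fully coupled in the sense of Example~\ref{ex:nonlinear}. It also cannot reproduce a non-mean-field condition of the form $\mathbb E|B_k\widehat Y+C_k\widehat Z|^2$.

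\textbf{The fix.} Your orthogonality identity already gives the correct embedding. Take $\widetilde A_k=A_k$, $\widetilde B_k=B_k$, $\widetilde C_k=C_k$, $\widetilde G=G$. Then $B_k\widehat Y^{(1)}+C_k\widehat Z^{(1)}+\widetilde B_k\widehat Y^{(2)}+\widetilde C_k\widehat Z^{(2)}=B_k\widehat Y+C_k\widehat Z$. Both domination and monotonicity become the uncentered non-mean-field inequalities.

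The genuine extra freedom of the mean-field theory is therefore $\widetilde B_k\neq B_k$, not $\widetilde B_k\neq0$. Example~\ref{ex:nonlinear} shows this, with $\widetilde B_k=2\neq B_k=1$. Your bullet contains this fix but presents it as compatible with the zero choice, which it is not. With the fix, (b) and (c) go through as you wrote them, subject to your justified caveat about how~\cite{Niu2026a} phrases its hypotheses. Your caution there is warranted, and it is stronger than you stated: the zero-block version demonstrably cannot yield a fully coupled system, so the comparison is not merely unverifiable.
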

\begin{proof}
Setting the mean-field matrices to zero removes the mean-field arguments from
Assumption~\ref{ass:dm}(ii)--(iii), leaving precisely the centered domination
and monotonicity inequalities displayed above together with the terminal
inequality scaled by $\nu$. The forward and backward recursions in~\eqref{11}
then no longer involve $\mathbb E[\theta_k]$, so the projection structure and
the frozen-coefficient comparison of Section~\ref{sec:proofs} are unchanged;
only the mean-field blocks of the domination matrices vanish. Lemma~\ref{lem:preservation},
Lemma~\ref{lem:apriori}, and Lemma~\ref{lem:continuation} therefore apply with
the same constants, and Theorem~\ref{thm:main} yields the stated conclusions.
\end{proof}

\subsection{A nonlinear mean-field example}
\begin{example}[Saturating interactions and degenerate combinations]\label{ex:nonlinear}
Let $n=1$, $N$ be any finite horizon, $x_{\rm init}\in\mathbb R$, and
$d,\delta\in L^2_{\mathbb F}(\mathbb T;\mathbb R)$. For pointwise arguments
$\theta=(x,y,z)$ and $\zeta=(\bar x,\bar y,\bar z)$ set
\[
U_1=(y-\bar y)+(z-\bar z),\qquad U_2=2\bar y+\bar z.
\]
Define
\begin{equation}\label{eq:nonlinear-example}
\begin{aligned}
b(k,\theta,\zeta)&=0.7x+d_k-U_1-2U_2,\\
\sigma(k,\theta,\zeta)&=\delta_k-U_1-U_2,\\
f(k+1,\theta,\zeta)&=-0.7y-\tanh x-\tanh\bar x,\\
\Lambda(y)&=x_{\rm init},\\
\Phi(x,\bar x)&=\tanh x+2\tanh\bar x.
\end{aligned}
\end{equation}
The system is fully coupled: both forward coefficients depend on the projected
backward variables, and the backward recursion and terminal condition depend
nonlinearly on the state and its mean.
\end{example}

\begin{proof}[Verification]
Since $\tanh$ is increasing and globally $1$-Lipschitz, all coefficients satisfy
Assumption~\ref{ass:lipschitz}. Take
\[
\mu=\tfrac12,\quad\nu=0,\quad M=G=\widetilde G=A_k=\widetilde A_k=0,
\]
and $B_k=C_k=\widetilde C_k=1$, $\widetilde B_k=2$ for all $k$.
Then $\kappa_B=\sqrt{10}$, $\kappa_C=2$, and
$\mu^2\max\{\kappa_M,\kappa_B,\kappa_C\}=\sqrt{10}/4<1$.
For a difference with $X$ fixed, put
\[
V_1=\widehat Y^{(1)}+\widehat Z^{(1)},\qquad
V_2=2\widehat Y^{(2)}+\widehat Z^{(2)}.
\]
$V_1$ is centered and $V_2$ is deterministic. Consequently,
\begin{align*}
\mathbb E|\Delta b|^2&=\mathbb E|V_1|^2+4|V_2|^2
 \le4\mathbb E|V_1+V_2|^2,\\
\mathbb E|\Delta\sigma|^2&=\mathbb E|V_1+V_2|^2.
\end{align*}
These are exactly the Case~A domination inequalities. Initial domination is
zero on both sides. For arbitrary differences write
$m=\mathbb E[X]$, $\bar m=\mathbb E[\bar X]$. Cancellation of the $0.7$ terms
and centered/mean orthogonality give
\begin{align*}
\mathbb E\langle\Delta\Gamma,\widehat\Theta\rangle
={}&-\mathbb E[(\tanh X-\tanh\bar X)\widehat X]\\
 &-(\tanh m-\tanh\bar m)(m-\bar m)\\
 &-\mathbb E|V_1|^2-|V_2|^2\\
\le{}&-\mu\mathbb E|V_1+V_2|^2.
\end{align*}
Likewise,
\begin{align*}
\mathbb E[\Delta\Phi\widehat X]
={}&\mathbb E[(\tanh X-\tanh\bar X)\widehat X]\\
&+2(\tanh m-\tanh\bar m)(m-\bar m)\ge0.
\end{align*}
Thus the first orientation holds. Theorem~\ref{thm:main} gives a unique
adapted square-integrable solution for every finite $N$. The nonlinear state
and first-moment terms prevent this system from being a special case of the
linear MF-FBS$\Delta$E theory in~\cite{Ma2025}; its nonzero mean interactions
also lie outside the non-mean-field class in~\cite{Niu2026a}.
\end{proof}

The example also explains the meaning of degeneracy. Centered perturbations
$\widehat Z^{(1)}=-\widehat Y^{(1)}$ and mean perturbations
$\widehat Z^{(2)}=-2\widehat Y^{(2)}$ make both $V_1$ and $V_2$ vanish.
Coercivity therefore does not directly control all $(Y,Z)$ directions.
However, the forward coefficient differences in those directions vanish as
well. The forward estimate first controls $\widehat x$ by the sources and these
combinations; the backward estimate then controls $\widehat y$ by $\widehat x$
and its sources. Lemma~\ref{lem:apriori} makes this closure explicit. These
algebraic perturbations are used to verify coefficient conditions; projected
solution variables additionally satisfy their defining conditional relations.

\subsection{One-sided stability estimates}
The following standard recursive estimates are used with frozen companion variables. Their proofs appear in Section~\ref{sec:proofs}.

For the forward direction, consider the mean-field recursion
\begin{flalign}\label{1}
	&
	\begin{cases}
		x_{k+1}=b(k,x_k,{\mathbb{E}}[x_k])+\sigma(k,x_k,\mathbb{E}[x_k])w_k,\\
		x_0=\eta,\quad k\in \mathbb{T}
	\end{cases}
	&
\end{flalign}

\begin{assumption}\label{ass:forward}
	$\eta\in\mathbb{R}^n$ and $(b,\sigma)$ are two given random mappings:
	\begin{flalign*}
		&
		\begin{aligned}
			b&:\Omega\times \mathbb{T}\times\mathbb{R}^n\times\mathbb{R}^n\to\mathbb{R}^n,\\
			\sigma&:\Omega\times \mathbb{T}\times\mathbb{R}^n\times\mathbb{R}^n\to\mathbb{R}^n,
		\end{aligned}
		&
	\end{flalign*}
	jointly measurable in $(\omega,x_1,x_2)$ and satisfying the following conditions, with all pointwise bounds holding simultaneously on a common set of probability one:
	
	(i) For any $x_1,x_2\in\mathbb{R}^n$, $b(k,x_1,x_2)$ and $\sigma(k,x_1,x_2)$ are $\mathcal{F}_{k-1}$-measurable, $k\in \mathbb{T}$;
	
	(ii) $b(\cdot,0,0)\in L^2_{\mathbb{F}}(\mathbb{T};\mathbb{R}^n)$ and $\sigma(\cdot,0,0)\in L^2_{\mathbb{F}}(\mathbb{T};\mathbb{R}^n)$;
	
	(iii) One constant $L>0$ works for every $k\in\mathbb T$ and all
	$x_1,\bar x_1,x_2,\bar x_2\in\mathbb R^n$ in the bounds
	\[
	\left| b(k,x_1,x_2)-b(k,\bar{x}_1,\bar{x}_2) \right|
	\leq L \left( |x_1-\bar{x}_1|+|x_2-\bar{x}_2| \right),
	\]
	and
	\[
	\left| \sigma(k,x_1,x_2)-\sigma(k,\bar{x}_1,\bar{x}_2) \right|
	\leq L \left( |x_1-\bar{x}_1|+|x_2-\bar{x}_2| \right).
	\]
\end{assumption}

\begin{lemma}\label{lem:forward}
	If Assumption~\ref{ass:forward} holds, the MF-S$\Delta$E~\eqref{1} determined by $(b,\sigma,\eta)$ has exactly one solution $x(\cdot)\in L^2_{\mathbb F}(\overline{\mathbb T};\mathbb R^n)$, and
	\begin{flalign}\label{2}
		&
		\begin{aligned}
			&\mathbb{E}\Bigg[\sum_{k=0}^{N}|x_k|^2\Bigg]\\ &\leq C\mathbb{E} \Big[|\eta|^2 + \sum_{k=0}^{N-1}\left(|b(k,0,0)|^2 +|\sigma(k,0,0)|^2\right) \Big],
		\end{aligned}
		&
	\end{flalign}
	where $C=C(N,L)>0$. For another admissible coefficient triple $(\bar b,\bar\sigma,\bar\eta)$ and its solution $\bar x(\cdot)$, one has
	
	\begin{flalign}\label{3}
		&
		\begin{aligned}
			&\mathbb{E}\left[\sum_{k=0}^{N}|\hat{x}_{k}|^{2}\right]\\
			&\leq C\mathbb{E}\Bigg[ |\eta-\bar{\eta}|^{2}\\
			&\quad+\sum_{k=0}^{N-1}\left|b(k,\bar{x}_{k},\mathbb{E}[\bar{x}_{k}])-\bar{b}(k,\bar{x}_{k},\mathbb{E}[\bar{x}_{k}])\right|^{2} \\
			&\quad+\sum_{k=0}^{N-1}\left|\sigma(k,\bar{x}_{k},\mathbb{E}[\bar{x}_{k}])-\bar{\sigma}(k,\bar{x}_{k},\mathbb{E}[\bar{x}_{k}])\right|^{2}\Bigg],
		\end{aligned}
		&
	\end{flalign}
	with the same dependence of $C$.
\end{lemma}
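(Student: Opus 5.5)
The plan is to argue by forward induction on $k$. Because the recursion~\eqref{1} is explicit, existence and uniqueness follow from the construction itself, and the estimates follow by propagating second moments one step at a time.

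\emph{Existence and uniqueness.} Since $\eta$ is deterministic, set $x_0=\eta$. Suppose $x_k$ is $\mathcal F_{k-1}$-measurable and square integrable. Then $\mathbb E[x_k]$ is a deterministic vector. Joint measurability together with Assumption~\ref{ass:forward}(i) shows that $b(k,x_k,\mathbb E[x_k])$ and $\sigma(k,x_k,\mathbb E[x_k])$ are $\mathcal F_{k-1}$-measurable; one can see this by approximating $x_k$ with simple functions and using continuity coming from the Lipschitz bound. These two quantities are square integrable because
\[
|b(k,x_k,\mathbb E[x_k])|\le |b(k,0,0)|+L(|x_k|+|\mathbb E[x_k]|),
\]
and the same bound holds for $\sigma$. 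Define $x_{k+1}$ by~\eqref{1}. It is $\mathcal F_k$-measurable because $w_k$ is. Any other solution must coincide with this one step by step, since each $x_{k+1}$ is determined by $x_k$; this gives uniqueness up to a.s.\ equality.

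\emph{The one-step energy identity.} This is the key step, and the only place where care is needed, because $w_k$ is not assumed independent of the past. Let $\beta,s$ be $\mathcal F_{k-1}$-measurable and square integrable. Conditioning on $\mathcal F_{k-1}$ and using $\mathbb E[w_k\mid\mathcal F_{k-1}]=0$ and $\mathbb E[w_k^2\mid\mathcal F_{k-1}]=1$ gives
\[
\mathbb E|\beta+s w_k|^2=\mathbb E|\beta|^2+\mathbb E|s|^2 .
\]
To justify this, we need the products to be integrable. I would first apply the conditional identity on the sets $\{|\beta|+|s|\le R\}\in\mathcal F_{k-1}$ and then let $R\to\infty$ by monotone convergence. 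This identity also shows that $x_{k+1}\in L^2$.

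Applying it with the Lipschitz bounds and Jensen's inequality $|\mathbb E[x_k]|^2\le\mathbb E|x_k|^2$, we obtain
\[
\mathbb E|x_{k+1}|^2\le 2\,\mathbb E\bigl[|b(k,0,0)|^2+|\sigma(k,0,0)|^2\bigr]+16L^2\,\mathbb E|x_k|^2 .
\]
A discrete Gronwall iteration, followed by summation over $k\le N$, yields~\eqref{2} with $C=C(N,L)$.

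\emph{Stability.} For~\eqref{3}, write
\[
\hat x_{k+1}=\bigl[b(k,x_k,\mathbb E x_k)-b(k,\bar x_k,\mathbb E\bar x_k)\bigr]+\bigl[b(k,\bar x_k,\mathbb E\bar x_k)-\bar b(k,\bar x_k,\mathbb E\bar x_k)\bigr]+(\text{same for }\sigma)\,w_k .
\]
Both brackets are $\mathcal F_{k-1}$-measurable. Applying the energy identity, the Lipschitz bound of $b$ and $\sigma$ (but not of $\bar b,\bar\sigma$, which are only evaluated at $\bar x_k$), and $|\mathbb E\hat x_k|^2\le\mathbb E|\hat x_k|^2$, we get
\[
\mathbb E|\hat x_{k+1}|^2\le 16L^2\,\mathbb E|\hat x_k|^2+2\,\mathbb E\bigl[|\Delta b_k|^2+|\Delta\sigma_k|^2\bigr],
\]
where $\Delta b_k,\Delta\sigma_k$ denote the frozen differences at $\bar x_k$ that appear in~\eqref{3}. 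Together with $\hat x_0=\eta-\bar\eta$, the same Gronwall summation gives~\eqref{3} with a constant depending only on $N$ and $L$.
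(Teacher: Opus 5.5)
Your proposal is correct and follows essentially the same route as the paper: explicit forward recursion, the one-step identity $\mathbb E|\beta+sw_k|^2=\mathbb E|\beta|^2+\mathbb E|s|^2$ from predictability and conditional unit variance, the Lipschitz bounds together with $|\mathbb E\hat x_k|^2\le\mathbb E|\hat x_k|^2$, and finite iteration with summation. The only differences are cosmetic. You derive \eqref{2} directly rather than by comparing with the zero data, and you write out the truncation argument behind the energy identity, which the paper only asserts.
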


For the reverse direction, consider
	\begin{flalign}\label{5}
	&
	\begin{cases}
		y_k = f\bigl(k+1, \theta_{k+1}, \mathbb{E}[\theta_{k+1}]\bigr), & \\
		\theta_{k+1}=\bigl(y'_{k+1},z'_{k+1}\bigr), & \\
		y_N = \xi, \quad k \in \mathbb{T}.
	\end{cases}
	&
\end{flalign}
where $z_{k+1} = y_{k+1} w_k$, $y'_{k+1} = \mathbb{E}[y_{k+1} | \mathcal{F}_{k-1}]$,
$z'_{k+1} = \mathbb{E}[y_{k+1} w_k | \mathcal{F}_{k-1}]$. Thus the driver is
written in the same packaged form $f(k+1,\theta,\mathbb E[\theta])$ that is used
for the coupled system~\eqref{11}; here the package is the $2$-dimensional
$\theta_{k+1}=(y'_{k+1},z'_{k+1})$.
Its data $(\xi,f)$ are required to satisfy:

\begin{assumption}\label{ass:backward}
	$\xi \in L^2_{\mathcal{F}_{N-1}}(\Omega, \mathbb{R}^n)$, $f$ is a given mapping:
	$
	f:\Omega \times \{1,2,\ldots,N\}
	\times \mathbb R^{2n}
	\times \mathbb R^{2n}
	\longrightarrow \mathbb R^n$, jointly measurable in its random and Euclidean arguments and satisfying the following conditions, with all pointwise bounds holding simultaneously on a common set of probability one:
	
	(i) For any $\theta, \bar\theta \in \mathbb{R}^{2n}$, $f(k+1, \theta, \bar\theta)$ is $\mathcal{F}_{k-1}$-measurable, $k \in \mathbb{T}$;
	
	(ii) $\{f(k+1,0,0)\}_{k\in\mathbb T}\in L^2_{\mathbb F}(\mathbb T;\mathbb R^n)$;
	
	(iii) There is an $L>0$, uniform in \(k\), for which every
	$\theta,\bar\theta,\widetilde\theta,\widetilde{\bar\theta}\in\mathbb R^{2n}$ satisfies
	\begin{flalign*}
		&
		\begin{aligned}
			& \bigl| f(k+1, \theta, \bar\theta) - f(k+1, \widetilde{\theta}, \widetilde{\bar\theta}) \bigr| \\
			& \le L\bigl( |\theta - \widetilde{\theta}| + |\bar\theta - \widetilde{\bar\theta}| \bigr).
		\end{aligned}
		&
	\end{flalign*}
\end{assumption}

\begin{lemma}\label{lem:backward}
	Under Assumption~\ref{ass:backward}, equation~\eqref{5} has a unique $y(\cdot)\in L^2_{\mathbb F}(\overline{\mathbb T};\mathbb R^n)$ satisfying
	\begin{flalign}\label{6}
		&
		\begin{aligned}
			&\mathbb{E}\left[ \sum_{k=0}^N |y_k|^2 \right]\\
			&\le C\mathbb{E}\left[ |\xi|^2 + \sum_{k=0}^{N-1} |f(k+1, 0,0)|^2 \right].
		\end{aligned}
		&
	\end{flalign}
	
	If another admissible pair $(\bar\xi,\bar f)$ produces $\bar y(\cdot)$, then
	
	\begin{flalign}\label{7}
		&
		\begin{aligned}
			&\mathbb{E}\left[ \sum_{k=0}^N |\hat{y}_k|^2 \right]\\
			&\le C\mathbb{E}\Bigg[|\xi - \bar{\xi}|^2\\
			&\quad+ \sum_{k=0}^{N-1} \Big| f\bigl(k+1, \bar\theta_{k+1}, \mathbb{E}[\bar\theta_{k+1}]\bigr) \\
			&\quad- \bar{f}\bigl(k+1, \bar\theta_{k+1}, \mathbb{E}[\bar\theta_{k+1}]\bigr) \Big|^2 \Bigg].
		\end{aligned}
		&
	\end{flalign}
	In both displays, $C=C(N,L)>0$, and $\bar\theta_{k+1}=(\bar y'_{k+1},\bar z'_{k+1})$.
\end{lemma}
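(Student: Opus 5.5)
Since the driver at step $k$ depends only on the next-step projections $(y'_{k+1},z'_{k+1})$ and their first moments, equation~\eqref{5} is explicit backward in time and no fixed point is needed. I will construct $y$ by backward induction, prove the bounds by a one-step estimate iterated $N$ times, and handle mean-field coupling via Jensen.

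\textbf{Existence and uniqueness.} Set $y_N=\xi$. Suppose $y_{k+1}$ has been constructed, is $\mathcal F_k$-measurable and lies in $L^2$. Then $y'_{k+1}=\mathbb E[y_{k+1}\mid\mathcal F_{k-1}]$ is $\mathcal F_{k-1}$-measurable and square integrable. The same holds for $z'_{k+1}$: conditional Cauchy--Schwarz with $\mathbb E[w_k^2\mid\mathcal F_{k-1}]=1$ gives
\[
|z'_{k+1}|^2\le \mathbb E[|y_{k+1}|^2\mid\mathcal F_{k-1}].
\]
Hence $\theta_{k+1}=(y'_{k+1},z'_{k+1})\in L^2$ and $\mathbb E[\theta_{k+1}]$ is a deterministic vector. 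Define $y_k:=f(k+1,\theta_{k+1},\mathbb E[\theta_{k+1}])$. Measurability of this composition follows from joint measurability of $f$ and the $\mathcal F_{k-1}$-measurability of $f(k+1,\cdot,\cdot)$ for fixed arguments (Assumption~\ref{ass:backward}(i)). The usual argument applies: $f$ is Lipschitz, hence Carathéodory, in the Euclidean variables. So $y_k$ is $\mathcal F_{k-1}$-measurable. Square integrability follows from Assumption~\ref{ass:backward}(ii)--(iii):
\[
|y_k|\le |f(k+1,0,0)|+L(|\theta_{k+1}|+|\mathbb E\theta_{k+1}|).
\]
Uniqueness is immediate because each $y_k$ is determined by $y_{k+1}$; two solutions agree at $N$ and then at every earlier step by induction.

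\textbf{A priori bound~\eqref{6}.} From the pointwise inequality above, together with $|a+b+c|^2\le3(\cdots)$ and Jensen $|\mathbb E\theta|^2\le\mathbb E|\theta|^2$, I get
\[
\mathbb E|y_k|^2\le 3\mathbb E|f(k+1,0,0)|^2+6L^2\mathbb E|\theta_{k+1}|^2.
\]
The conditional projection bounds give $\mathbb E|\theta_{k+1}|^2\le 2\mathbb E|y_{k+1}|^2$. Thus
\[
\mathbb E|y_k|^2\le 3\mathbb E|f(k+1,0,0)|^2+12L^2\mathbb E|y_{k+1}|^2.
\]
Iterating from $k=N-1$ down to $0$ yields $\mathbb E|y_k|^2\le C\,\mathbb E[|\xi|^2+\sum_j|f(j+1,0,0)|^2]$ with $C$ depending on $(1+12L^2)^N$. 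Summing over $k$ gives~\eqref{6}.

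\textbf{Stability bound~\eqref{7}.} Since the recursion is linear in the differences, I write
\[
\hat y_k=\bigl[f(k+1,\theta_{k+1},\mathbb E\theta_{k+1})-f(k+1,\bar\theta_{k+1},\mathbb E\bar\theta_{k+1})\bigr]+\bigl[f-\bar f\bigr](k+1,\bar\theta_{k+1},\mathbb E\bar\theta_{k+1}).
\]
The first bracket is bounded by $L(|\hat\theta_{k+1}|+|\mathbb E\hat\theta_{k+1}|)$. Because both projections are linear, $\hat\theta_{k+1}$ is the projection pair of $\hat y_{k+1}$, so the same estimate gives $\mathbb E|\hat\theta_{k+1}|^2\le2\mathbb E|\hat y_{k+1}|^2$. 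The recursion then reads
\[
\mathbb E|\hat y_k|^2\le 2\,\mathbb E|(f-\bar f)(\cdots)|^2+8L^2\mathbb E|\hat y_{k+1}|^2,
\]
and iterating it gives~\eqref{7}. The only mildly delicate point is the $z'$ projection bound, which relies on the conditional normalization of $w_k$ rather than on independence. The rest is a finite discrete Gronwall iteration, so I expect no real obstacle.
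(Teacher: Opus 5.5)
Your proposal is correct and follows essentially the same route as the paper: explicit backward evaluation from $y_N=\xi$, the conditional projection bounds for $y'_{k+1}$ and $z'_{k+1}$, a one-step Lipschitz--Jensen estimate, and finite iteration. The paper obtains uniqueness and \eqref{6} from \eqref{7} by comparison with identical (resp.\ zero) data, whereas you prove them directly.

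There is one harmless arithmetic slip. With the bound $\mathbb E|\hat\theta_{k+1}|^2\le 2\,\mathbb E|\hat y_{k+1}|^2$ that you invoke, the stability recursion gives $16L^2$ rather than $8L^2$. Your $8L^2$ can be recovered from the sharper orthogonality bound $\mathbb E|\mathbb E[Y\mid\mathcal F_{k-1}]|^2+\mathbb E|\mathbb E[Yw_k\mid\mathcal F_{k-1}]|^2\le\mathbb E|Y|^2$. In any case only $C=C(N,L)$ matters.
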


The dependencies are now explicit: Lemmas~\ref{lem:forward} and
\ref{lem:backward} supply the decoupled bounds, the continuation family
preserves the domination--monotonicity constants, and a uniform contraction
step completes Theorem~\ref{thm:main}. The details follow.

\section{Proofs and Continuation Construction}\label{sec:proofs}
\subsection{Proofs of the one-sided estimates}
\begin{proof}[Proof of Lemma~\ref{lem:forward}]
Chronological recursion constructs the solution. Predictability and conditional
unit variance give
\[
\mathbb E|b_k+\sigma_kw_k|^2=\mathbb E|b_k|^2+\mathbb E|\sigma_k|^2,
\]
since the mixed term has zero conditional expectation. Global Lipschitz
continuity ensures square integrability at each finite step.
For two data sets, let
\begin{align*}
\beta_k&=b(k,\bar x_k,\mathbb E\bar x_k)-\bar b(k,\bar x_k,\mathbb E\bar x_k),\\
\chi_k&=\sigma(k,\bar x_k,\mathbb E\bar x_k)-\bar\sigma(k,\bar x_k,\mathbb E\bar x_k).
\end{align*}
The same identity, the Lipschitz bounds, and
$|\mathbb E\widehat x_k|^2\le\mathbb E|\widehat x_k|^2$ yield
\[
\mathbb E|\widehat x_{k+1}|^2
\le C_L\mathbb E|\widehat x_k|^2
+C_L\mathbb E(|\beta_k|^2+|\chi_k|^2).
\]
Starting from $\widehat x_0=\eta-\bar\eta$, finite iteration and summation
give~\eqref{3} with $C(N,L)$. Identical data imply uniqueness; comparison
with the zero recursion gives~\eqref{2}.
\end{proof}

\begin{proof}[Proof of Lemma~\ref{lem:backward}]
For $Y\in L^2_{\mathcal F_k}$, conditional Jensen and Cauchy--Schwarz give
\begin{equation}\label{eq:projection-bound}
\begin{aligned}
\mathbb E|\mathbb E[Y\mid\mathcal F_{k-1}]|^2&\le\mathbb E|Y|^2,\\
\mathbb E|\mathbb E[Yw_k\mid\mathcal F_{k-1}]|^2&\le\mathbb E|Y|^2.
\end{aligned}
\end{equation}
In the second inequality use $\mathbb E[w_k^2\mid\mathcal F_{k-1}]=1$.
In fact, writing $P_kY=\mathbb E[Y\mid\mathcal F_{k-1}]$ and
$Q_kY=\mathbb E[Yw_k\mid\mathcal F_{k-1}]$, the residual
$R=Y-P_kY-(Q_kY)w_k$ belongs to $L^2$ and satisfies
\[
\mathbb E|Y|^2=\mathbb E|P_kY|^2+\mathbb E|Q_kY|^2+\mathbb E|R|^2.
\]
Indeed, conditional centering and unit variance make $P_kY$ and $(Q_kY)w_k$
orthogonal, and the defining projections make $R$ orthogonal to both.
For unbounded predictable factors, these identities follow first after
truncation and then by $L^2$ convergence. Thus both projection operators are
bounded, including when $Yw_k$ itself is only integrable.
Thus, starting from $y_N=\xi$, backward evaluation gives a predictable
square-integrable solution. Write $e_k$ for the coefficient difference evaluated
at $(\bar y'_{k+1},\mathbb E\bar y'_{k+1},\bar z'_{k+1},\mathbb E\bar z'_{k+1})$,
as in~\eqref{7}. Lipschitz continuity,~\eqref{eq:projection-bound}, and Jensen
give
\[
\mathbb E|\widehat y_k|^2
\le C_L\mathbb E|\widehat y_{k+1}|^2+C_L\mathbb E|e_k|^2.
\]
Iteration from $\widehat y_N=\xi-\bar\xi$ and summation prove~\eqref{7}.
Identical data imply uniqueness, and comparison with $(\bar f,\bar\xi)=(0,0)$
proves~\eqref{6}.
\end{proof}

\subsection{Continuation family and structural preservation}
We now build the homotopy required by the already stated principal theorem.
The first monotonicity orientation is handled directly; the second is recovered
through Remark~\ref{rem:sign}.

For any $(\xi, \eta, \rho(\cdot)) \in \mathcal{H}(\overline{\mathbb{T}})$ with $\rho(\cdot)=(\varphi(\cdot),\psi(\cdot),\gamma(\cdot))$, introduce the family of MF-FBS$\Delta$Es indexed by $\alpha\in[0,1]$:
\begin{flalign}\label{16}
	&
	\begin{cases}
		\begin{aligned}
			x_{k+1}^{\alpha} = &\left[ b^{\alpha}(k, \theta_k^{\alpha}, \mathbb{E}[\theta_k^{\alpha}]) + \psi_k \right]
			\\&+ \left[ \sigma^{\alpha}(k, \theta_k^{\alpha}, \mathbb{E}[\theta_k^{\alpha}]) + \gamma_k \right] w_k,
		\end{aligned} \\
		y_k^{\alpha} = - \left[ f^{\alpha}(k+1, \theta_k^{\alpha}, \mathbb{E}[\theta_k^{\alpha}]) + \varphi_k \right], \\
		x_0^{\alpha} = \Lambda^{\alpha}(y_0^{\alpha}) + \xi, \\
		y_N^{\alpha} = \Phi^{\alpha}(x_N^{\alpha}, \mathbb{E}[x_N^{\alpha}]) + \eta,
	\end{cases}
	&
\end{flalign}
where
\begin{flalign*}
	&
	\begin{aligned}
			\theta_k^{\alpha}
			&= (x_k^{\alpha}, y_{k+1}'^{\alpha}, z_{k+1}'^{\alpha}) \\
			&= \left( x_k^{\alpha},\ \mathbb{E}\left[ y_{k+1}^{\alpha} \big| \mathcal{F}_{k-1} \right],\ \mathbb{E}\left[ y_{k+1}^{\alpha} w_k \big| \mathcal{F}_{k-1} \right] \right).
	\end{aligned}
	&
\end{flalign*}

For pointwise arguments
\begin{flalign*}
	&
	\theta=(x,y,z),\qquad
	\bar{\theta}=(\bar{x},\bar{y},\bar{z})
	\in \mathbb{R}^{3n},
	&
\end{flalign*}
define the coefficient family
\begin{flalign}\label{17}
	&
	\begin{cases}
		\begin{aligned}
			\Phi^{\alpha}(x,\bar{x})
			={}& \alpha\Phi(x,\bar{x})
			\\
			&+ (1-\alpha)\nu\Big\{
			G^TG(x-\bar{x})
			+\widetilde{G}^T\widetilde{G}\bar{x}
			\Big\},
		\end{aligned}
		\\[1em]
		\begin{aligned}
			\Lambda^{\alpha}(y)
			={}& \alpha\Lambda(y)\\
			&-(1-\alpha)\mu M^TMy,
		\end{aligned}
		\\[1em]
		\begin{aligned}
			f^{\alpha}(k+1,\theta,\bar{\theta})
			={}& \alpha f(k+1,\theta,\bar{\theta})
			\\
			&-(1-\alpha)\nu\Big\{
			A_k^TA_k(x-\bar{x})
			+\widetilde{A}_k^T\widetilde{A}_k\bar{x}
			\Big\},
		\end{aligned}
		\\[1em]
		\begin{aligned}
			&b^{\alpha}(k,\theta,\bar{\theta})
			=\alpha b(k,\theta,\bar{\theta})
			\\
			&\quad-(1-\alpha)\mu\Bigg\{
			B_k^T\Big[
			B_k(y-\bar{y})+C_k(z-\bar{z})
			\Big]
			\\
			&\qquad\qquad
			+\widetilde{B}_k^T\Big(
			\widetilde{B}_k\bar{y}
			+\widetilde{C}_k\bar{z}
			\Big)
			\Bigg\},
		\end{aligned}
		\\[0.3em]
		\begin{aligned}
			&\sigma^{\alpha}(k,\theta,\bar{\theta})
			=\alpha \sigma(k,\theta,\bar{\theta})
			\\
			&\quad-(1-\alpha)\mu\Bigg\{
			C_k^T\Big[
			B_k(y-\bar{y})+C_k(z-\bar{z})
			\Big]
			\\
			&\qquad\qquad
			+\widetilde{C}_k^T\Big(
			\widetilde{B}_k\bar{y}
			+\widetilde{C}_k\bar{z}
			\Big)
			\Bigg\}.
		\end{aligned}
	\end{cases}
	&
\end{flalign}
Here \(k\in\mathbb{T}\), \(\alpha\in[0,1]\), and the coefficients are
evaluated along the stochastic system by taking
\begin{flalign*}
	&
	(\theta,\bar{\theta})
	=
	\bigl(\theta_k,\mathbb{E}[\theta_k]\bigr).
	&
\end{flalign*}

Thus the pointwise definitions above reproduce the centered/mean
form of the continuation system~\eqref{16}.

Also write
\begin{flalign*}
	&
	\begin{aligned}
		\Gamma^\alpha(k,\theta,\bar{\theta})
		:={}&\bigl(
		f^\alpha(k+1,\theta,\bar{\theta}),
		b^\alpha(k,\theta,\bar{\theta}),
		\sigma^\alpha(k,\theta,\bar{\theta})
		\bigr).
	\end{aligned}
	&
\end{flalign*}
All matrix norms are operator norms. A generic \(K\), allowed to vary, depends
only on \(N,L,\mu,\nu\) and the deterministic bounds in Assumption~\ref{ass:dm}.

Under the first alternative, enlarge the original Lipschitz constant \(L_0\), if needed, so that
\begin{flalign*}
	&
	L \geq \max\left\{ L_0,\ \mu \kappa_M^2,\ \frac{\nu}{2}\kappa_G^2,\ \frac{\nu}{2}\kappa_A^2,\ \frac{\mu}{2}\left(\kappa_B^2 + \kappa_C^2\right) \right\},
	&
\end{flalign*}
Then the coefficient triple
\begin{flalign*}
	&
	(\Lambda^\alpha, \Phi^\alpha, \Gamma^\alpha)
	&
\end{flalign*}
is covered uniformly by the following lemma.

\begin{lemma}[Preservation of the domination-monotonicity structure]\label{lem:preservation}
	If \((\Lambda,\Phi,\Gamma)\) satisfies
	Assumptions~\ref{ass:lipschitz} and~\ref{ass:dm} in the first monotonicity orientation, let
	\begin{flalign*}
		&
		(\Lambda^\alpha,\Phi^\alpha,\Gamma^\alpha),
		\qquad \alpha\in[0,1],
		&
	\end{flalign*}
	be given by~\eqref{17}. For every \(\alpha\in[0,1]\), this coefficient triple
	satisfies the same assumptions with constants \(\mu,\nu\) and matrices
	\begin{flalign*}
		&
		M,\quad G,\widetilde G,\quad
		A_k,\widetilde A_k,\quad B_k,\widetilde B_k,\quad
		C_k,\widetilde C_k .
		&
	\end{flalign*}
	Its Lipschitz constant is uniform in \(\alpha\).
	
\end{lemma}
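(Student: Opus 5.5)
We verify each hypothesis separately for the convex combination \eqref{17}. We write the perturbation parts as linear maps evaluated at $(\theta,\bar\theta)=(\Theta,\mathbb E[\Theta])$. The key observation is that, along random arguments, these maps reduce to centered/mean expressions. For instance,
\[
b^\alpha-\alpha b=-(1-\alpha)\mu\bigl\{B_k^\top V^{(1)}+\widetilde B_k^\top V^{(2)}\bigr\},
\]
where $V^{(1)}=B_kY^{(1)}+C_kZ^{(1)}$ and $V^{(2)}=\widetilde B_kY^{(2)}+\widetilde C_kZ^{(2)}$. The analogous identities hold for $\sigma^\alpha$, $f^\alpha$ and $\Phi^\alpha$, where $x-\bar x$ becomes $X^{(1)}$ and $\bar x$ becomes $X^{(2)}$.

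\emph{Lipschitz continuity and measurability.} Measurability and integrability of $(\Lambda^\alpha(0),\Phi^\alpha(0,0),\Gamma^\alpha(\cdot,0,0))=\alpha(\Lambda(0),\Phi(0,0),\Gamma(\cdot,0,0))$ are immediate, because the added terms are deterministic and linear. For the Lipschitz bound, each added term is linear with operator norm at most $\mu\kappa_M^2$, $\nu(\|G\|^2+\|\widetilde G\|^2)=\tfrac\nu2\kappa_G^2$, $\tfrac\nu2\kappa_A^2$, or a bound by $\tfrac\mu2(\kappa_B^2+\kappa_C^2)$ up to a fixed factor. Hence the triangle inequality gives a Lipschitz constant $\alpha L_0+(1-\alpha)L\le L$ (possibly times a numeric constant), which is uniform in $\alpha$.

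\emph{Monotonicity (first orientation).} This is linear in the coefficients. Take Case A, so $\nu=0$ and only $\Lambda,b,\sigma$ are perturbed. For $\Lambda^\alpha$ we get $\alpha(\le-\mu|M\hat y|^2)-(1-\alpha)\mu|M\hat y|^2\le-\mu|M\hat y|^2$. For the running pairing, the perturbation contributes
\[
-(1-\alpha)\mu\,\mathbb E\bigl[\langle B^\top\hat V^{(1)}+\widetilde B^\top\hat V^{(2)},\widehat Y\rangle+\langle C^\top\hat V^{(1)}+\widetilde C^\top\hat V^{(2)},\widehat Z\rangle\bigr].
\]
Splitting $\widehat Y=\widehat Y^{(1)}+\widehat Y^{(2)}$ and using centered/mean orthogonality, this equals $-(1-\alpha)\mu\,\mathbb E[|\hat V^{(1)}|^2+|\hat V^{(2)}|^2]=-(1-\alpha)\mu\,\mathbb E|\hat V^{(1)}+\hat V^{(2)}|^2$. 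Adding $\alpha$ times the original inequality yields exactly the required bound. The terminal inequality is trivial: $\Phi^\alpha=\alpha\Phi$ and $\alpha\nu=0$. Case B is symmetric. There the terms $\nu\mathbb E[\langle G^\top G\widehat X^{(1)}+\widetilde G^\top\widetilde G\widehat X^{(2)},\widehat X\rangle]=\nu\mathbb E|G\widehat X^{(1)}+\widetilde G\widehat X^{(2)}|^2$ and the corresponding $A$-terms arise by the same orthogonality, and $\Lambda^\alpha=\alpha\Lambda$ is monotone with $\mu=0$.

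\emph{Domination (the main obstacle).} This is the only step where the size condition (iv) enters. In Case A with $X$ fixed, we have
\[
\|\Delta b^\alpha\|_{L^2}\le\alpha\|\Delta b\|+(1-\alpha)\mu\|B^\top\hat V^{(1)}+\widetilde B^\top\hat V^{(2)}\|.
\]
By orthogonality, the last norm squared is at most $\|B\|^2\mathbb E|\hat V^{(1)}|^2+\|\widetilde B\|^2|\hat V^{(2)}|^2\le\tfrac12\kappa_B^2\,\mathbb E|\hat V^{(1)}+\hat V^{(2)}|^2$. So we need $\mu\kappa_B/\sqrt2\le1/\mu$, i.e.\ $\mu^2\kappa_B\le\sqrt2$, which follows from (iv). Combined with the original bound $\alpha/\mu$, the convex combination gives the factor $1/\mu$. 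The same argument handles $\sigma^\alpha$ using $\kappa_C$. For $\Lambda^\alpha$, $|\mu M^\top M\hat y|\le\mu\kappa_M|M\hat y|\le\mu^{-1}|M\hat y|$ by $\mu^2\kappa_M\le1$. Case B is analogous with $\kappa_G,\kappa_A$ and $\nu$. The delicate point is checking that the operator-norm bounds combine with orthogonality exactly into the normalized constants of (iv). If a factor $\sqrt2$ mismatch appears, I would absorb it using the definition of the $\kappa$'s, which already carry the factor $2$.
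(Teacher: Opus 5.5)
Your proposal is correct and takes essentially the same route as the paper. The added terms are deterministic and linear, which gives measurability and a Lipschitz bound uniform in $\alpha$. Centered/mean orthogonality turns the added pairings into $-(1-\alpha)\mu\,\mathbb E|B_k\widehat Y^{(1)}+C_k\widehat Z^{(1)}+\widetilde B_k\widehat Y^{(2)}+\widetilde C_k\widehat Z^{(2)}|^2$ (respectively the $\nu$-terms), which gives monotonicity. For domination, the $L^2$ triangle inequality together with $\mu^2\max\{\kappa_M,\kappa_B,\kappa_C\}\le1$ (respectively $\nu^2\max\{\kappa_G,\kappa_A\}\le1$) suffices. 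Your orthogonality bound with factor $\tfrac12\kappa_B^2$ is slightly sharper than the paper's $\kappa_B^2$, so the $\sqrt2$ mismatch you worry about at the end does not arise.
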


\begin{proof}
	Work under the first alternative. The added terms in~\eqref{17} are bounded
	deterministic linear maps of centered and mean variables. Measurability,
	integrability, and a Lipschitz bound uniform in \(\alpha\) follow immediately.
	For the remaining properties, write
	\begin{flalign*}
		&
		X^{(1)}=X-\mathbb E[X],\qquad X^{(2)}=\mathbb E[X].
		&
	\end{flalign*}
	For two triples
	\begin{flalign*}
		&
		\Theta=(X,Y,Z),\qquad \bar\Theta=(\bar X,\bar Y,\bar Z),
		&
	\end{flalign*}
	set
	\begin{flalign*}
		&
		\widehat X=X-\bar X,\quad
		\widehat Y=Y-\bar Y,\quad
		\widehat Z=Z-\bar Z,
		&
	\end{flalign*}
	and define
	\begin{flalign*}
		&
		\mathcal A_k(\widehat X)
		:= A_k\widehat X^{(1)}+\widetilde A_k\widehat X^{(2)},
		&
	\end{flalign*}
	\begin{flalign*}
		&
		\mathcal B_k(\widehat Y,\widehat Z)
		:=B_k\widehat Y^{(1)}+C_k\widehat Z^{(1)}
		+\widetilde B_k\widehat Y^{(2)}
		+\widetilde C_k\widehat Z^{(2)} .
		&
	\end{flalign*}
	Deterministic matrices preserve centered/mean orthogonality; for instance,
	\begin{flalign*}
		&
		\mathbb E\left[
		\left|G\widehat X^{(1)}+\widetilde G\widehat X^{(2)}\right|^2
		\right]
		=
		\mathbb E\left[|G\widehat X^{(1)}|^2
		+
		|\widetilde G\widehat X^{(2)}|^2 \right].
		&			
	\end{flalign*}
	The size condition also gives
	\begin{flalign*}
		&
		|M^TMv|\le \kappa_M|Mv|,
		&
	\end{flalign*}
	and
	\begin{flalign*}
		&
		\begin{aligned}
			&\mathbb E\left|B_k^TU_k+\widetilde B_k^T\widetilde U_k\right|^2
			\le \kappa_B^2\,\mathbb E|U_k+\widetilde U_k|^2,\\
			&\mathbb E\left|C_k^TU_k+\widetilde C_k^T\widetilde U_k\right|^2
			\le \kappa_C^2\,\mathbb E|U_k+\widetilde U_k|^2,
		\end{aligned}
		&
	\end{flalign*}
	for centered \(U_k\) and deterministic \(\widetilde U_k\).
	
	For monotonicity, begin with \(\Lambda^\alpha\):
	\begin{flalign*}
		&
		\begin{aligned}
			&\left\langle
			\Lambda^\alpha(y)-\Lambda^\alpha(\bar y),\,y-\bar y
			\right\rangle  \\
			& =
			\alpha
			\left\langle
			\Lambda(y)-\Lambda(\bar y),\,y-\bar y
			\right\rangle
			-(1-\alpha)\mu |M(y-\bar y)|^2  \\
			& \le
			-\alpha\mu |M(y-\bar y)|^2
			-(1-\alpha)\mu |M(y-\bar y)|^2  \\
			& =
			-\mu |M(y-\bar y)|^2 .
		\end{aligned}
		&
	\end{flalign*}
	The terminal map gives
	\begin{flalign*}
		&
		\begin{aligned}
			&\mathbb E\left[
			\left\langle
			\Phi^\alpha(X,\mathbb E[X])
			-\Phi^\alpha(\bar X,\mathbb E[\bar X]),
			\widehat X
			\right\rangle
			\right]   \\
			& =
			\alpha\mathbb E\left[
			\left\langle
			\Phi(X,\mathbb E[X])
			-\Phi(\bar X,\mathbb E[\bar X]),
			\widehat X
			\right\rangle
			\right]   \\
			&\quad
			+(1-\alpha)\nu
			\mathbb E\left[
			\left\langle
			G^{T}G\widehat X^{(1)}
			+\widetilde G^{T}\widetilde G\widehat X^{(2)},
			\widehat X
			\right\rangle
			\right]  \\
			& \ge
			\nu\mathbb E\left[
			\left|
			G\widehat X^{(1)}+\widetilde G\widehat X^{(2)}
			\right|^2
			\right].
		\end{aligned}
		&
	\end{flalign*}
	The same decomposition applied to \(\Gamma^\alpha=(f^\alpha,b^\alpha,\sigma^\alpha)\) yields
	\begin{flalign*}
		&
		\begin{aligned}
			&\mathbb E\left[
			\left\langle
			\Gamma^\alpha(k,\Theta,\mathbb E[\Theta])
			-\Gamma^\alpha(k,\bar\Theta,\mathbb E[\bar\Theta]),
			\Theta-\bar\Theta
			\right\rangle
			\right]  \\
			& =
			\alpha\mathbb E\left[
			\left\langle
			\Gamma(k,\Theta,\mathbb E[\Theta])
			-\Gamma(k,\bar\Theta,\mathbb E[\bar\Theta]),
			\Theta-\bar\Theta
			\right\rangle
			\right]  \\
			&\quad
			-(1-\alpha)\nu
			\mathbb E\left[
			|\mathcal A_k(\widehat X)|^2
			\right]
			-(1-\alpha)\mu
			\mathbb E\left[
			|\mathcal B_k(\widehat Y,\widehat Z)|^2
			\right]  \\
			& \le
			-\nu\mathbb E\left[
			|\mathcal A_k(\widehat X)|^2
			\right]
			-\mu\mathbb E\left[
			|\mathcal B_k(\widehat Y,\widehat Z)|^2
			\right].
		\end{aligned}
		&
	\end{flalign*}
	This proves preservation of monotonicity. In Case~A, domination and the size bound give
	\begin{flalign*}
		&
		\begin{aligned}
			&|\Lambda^\alpha(y)-\Lambda^\alpha(\bar y)|\\
			&\le
			\alpha |\Lambda(y)-\Lambda(\bar y)|
			+(1-\alpha)\mu |M^TM(y-\bar y)|  \\
			&\le
			\frac{\alpha}{\mu}|M(y-\bar y)|
			+(1-\alpha)\mu\|M\|\,|M(y-\bar y)|  \\
			&\le
			\frac{1}{\mu}|M(y-\bar y)| .
		\end{aligned}
		&
	\end{flalign*}
	With \(X\) fixed, the drift satisfies
	\begin{flalign*}
		&
		\begin{aligned}
			&\left(
			\mathbb E\left[
			\left|
			b^\alpha(k,\Theta,\mathbb E[\Theta])
			-b^\alpha(k,\bar\Theta,\mathbb E[\bar\Theta])
			\right|^2
			\right]
			\right)^{1/2}  \\
			& \le
			\frac{\alpha}{\mu}
			\left(
			\mathbb E\left[
			|\mathcal B_k(\widehat Y,\widehat Z)|^2
			\right]
			\right)^{1/2}\\
			&\quad+(1-\alpha)\mu\kappa_B
			\left(
			\mathbb E\left[
			|\mathcal B_k(\widehat Y,\widehat Z)|^2
			\right]
			\right)^{1/2}  \\
			& \le
			\frac{1}{\mu}
			\left(
			\mathbb E\left[
			|\mathcal B_k(\widehat Y,\widehat Z)|^2
			\right]
			\right)^{1/2}.
		\end{aligned}
		&
	\end{flalign*}
	Squaring and using \(\mu^2\kappa_B\le1\) gives drift domination; replacing
	\(B\) by \(C\) and using \(\mu^2\kappa_C\le1\) treats \(\sigma^\alpha\).
	
	In Case~B the terminal estimate is
	\begin{flalign*}
		&
		\begin{aligned}
			&\left(
			\mathbb E\left[
			\left|
			\Phi^\alpha(X,\mathbb E[X])
			-\Phi^\alpha(\bar X,\mathbb E[\bar X])
			\right|^2
			\right]
			\right)^{1/2}  \\
			& \le
			\frac{1}{\nu}
			\left(
			\mathbb E\left[
			\left|
			G\widehat X^{(1)}+\widetilde G\widehat X^{(2)}
			\right|^2
			\right]
			\right)^{1/2},
		\end{aligned}
		&
	\end{flalign*}
	by \(\nu^2\kappa_G\le1\). With \(Y,Z\) fixed, the driver obeys
	\begin{flalign*}
		&
		\begin{aligned}
			&\left(
			\mathbb E\left[
			\left|
			f^\alpha(k+1,\Theta,\mathbb E[\Theta])
			-f^\alpha(k+1,\bar\Theta,\mathbb E[\bar\Theta])
			\right|^2
			\right]
			\right)^{1/2}  \\
			& \le
			\frac{1}{\nu}
			\left(
			\mathbb E\left[
			|\mathcal A_k(\widehat X)|^2
			\right]
			\right)^{1/2},
		\end{aligned}
		&
	\end{flalign*}
	using \(\nu^2\kappa_A\le1\). Thus all required bounds hold independently of
	\(\alpha\).
\end{proof}

\subsection{Uniform estimate and continuation step}

We henceforth work with the first orientation.

When $\alpha=0$, MF-FBS$\Delta$E~\eqref{16} can be rewritten in the following form:
\begin{flalign}\label{18}
	&
	\begin{cases}
		\begin{aligned}
			x_{k+1}^{0}
			= &-\mu\Big[B_k^T\Big(B_k y_{k+1}'^{0(1)} + C_k z_{k+1}'^{0(1)}\Big)
			\\&+ \widetilde{B}_k^T\Big(\widetilde{B}_k y_{k+1}'^{0(2)} + \widetilde{C}_k z_{k+1}'^{0(2)}\Big)\Big] + \psi_k
			\\&+ \Big\{-\mu\Big[C_k^T\Big(B_k y_{k+1}'^{0(1)} + C_k z_{k+1}'^{0(1)}\Big)
			\\&+ \widetilde{C}_k^T\Big(\widetilde{B}_k y_{k+1}'^{0(2)} + \widetilde{C}_k z_{k+1}'^{0(2)}\Big)\Big] + \gamma_k\Big\}w_k,
		\end{aligned}\\
		y_k^{0}
		= -\Big[-\nu\Big(A_k^TA_k x_k^{0(1)} + \widetilde{A}_k^T\widetilde{A}_k x_k^{0(2)}\Big) + \varphi_k\Big], \\
		x_0^{0}
		= -\mu M^T M y_0^{0} + \xi, \\
		y_N^{0}
		= \nu\Big(G^TG x_N^{0(1)} + \widetilde{G}^T\widetilde{G} x_N^{0(2)}\Big) + \eta.
	\end{cases}
	&
\end{flalign}
Here the superscripts \((1),(2)\) are those of Assumption~\ref{ass:dm}; for example,
$y_{k+1}^{\prime0(1)}=(\mathbb E[y_{k+1}^0\mid\mathcal F_{k-1}])^{(1)}$.

Equation~\eqref{18} is sequentially solvable. In Case~A, Lemma~\ref{lem:backward}
first determines \(y^0\), after which Lemma~\ref{lem:forward} determines \(x^0\).
Case~B reverses this order. The intermediate projected terms are admissible
square-integrable inputs, so the resulting pair is the unique element of
$N^2(\overline{\mathbb T};\mathbb R^{2n})$ solving~\eqref{18}.

At \(\alpha=1\) with zero sources,~\eqref{16} equals~\eqref{11}. A uniform
local step from any solvable \(\alpha_0\) therefore reaches \(1\) after finitely
many iterations, as in~\cite{Hu1995}. The required estimate is next.

\begin{lemma}\label{lem:apriori}
	Work in the first monotonicity orientation at a fixed
	\(\alpha\in[0,1]\). For the two solutions of~\eqref{16} driven,
	respectively, by \((\xi,\eta,\rho)\) and
	\((\bar\xi,\bar\eta,\bar\rho)\), the total difference energy obeys
	\begin{flalign}\label{19}
		&
		\begin{aligned}
			\mathbb{E}\left[ \sum_{k=0}^{N} |\widehat{x}_k|^2 + \sum_{k=0}^{N} |\widehat{y}_k|^2 \right] \leq K \mathbb{E}[\widehat{J}].
		\end{aligned}
		&
	\end{flalign}
	where
	\begin{flalign}\label{20}
		&
		\begin{aligned}
			\widehat{J} = |\widehat{\eta}|^2 + \sum_{k=0}^{N-1} |\widehat{\varphi}_k|^2 + \sum_{k=0}^{N-1} |\widehat{\psi}_k|^2 + \sum_{k=0}^{N-1} |\widehat{\gamma}_k|^2 + |\widehat{\xi}|^2.
		\end{aligned}
		&
	\end{flalign}
	Here $\widehat{\xi} = \xi - \bar{\xi}$, $\widehat{\varphi} = \varphi - \bar{\varphi}$, etc. The constant $K$ depends only on $N$, the common Lipschitz bound, $\mu$, $\nu$, and the deterministic matrix bounds in Assumption~\ref{ass:dm}; in particular, it is independent of $\alpha$. In the active cases this allows dependence on \(\mu^{-1}\) when \(\mu>0\) or on \(\nu^{-1}\) when \(\nu>0\), but on no solution-dependent quantity.
\end{lemma}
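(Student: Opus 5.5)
The plan is the standard duality--monotonicity argument. First derive a discrete Itô-type duality identity for $\mathbb E\langle\widehat x_{k+1},\widehat y_{k+1}\rangle-\mathbb E\langle\widehat x_k,\widehat y_k\rangle$; then use monotonicity of $(\Lambda^\alpha,\Phi^\alpha,\Gamma^\alpha)$ from Lemma~\ref{lem:preservation} to bound the coercive combinations by the sources; finally close the estimate with the domination inequalities and the one-sided Lemmas~\ref{lem:forward}--\ref{lem:backward}. Throughout, write $\widehat\theta_k=(\widehat x_k,\widehat y'_{k+1},\widehat z'_{k+1})$ and $\Delta\Gamma^\alpha_k$ for the coefficient difference evaluated at $\theta_k$ versus $\bar\theta_k$.

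\textbf{Step 1 (duality).} Since $\widehat x_{k+1}=\Delta b^\alpha_k+\widehat\psi_k+(\Delta\sigma^\alpha_k+\widehat\gamma_k)w_k$ has $\mathcal F_{k-1}$-measurable coefficients, conditioning on $\mathcal F_{k-1}$ gives
\[
\mathbb E\langle\widehat x_{k+1},\widehat y_{k+1}\rangle
=\mathbb E\langle\Delta b^\alpha_k+\widehat\psi_k,\widehat y'_{k+1}\rangle
+\mathbb E\langle\Delta\sigma^\alpha_k+\widehat\gamma_k,\widehat z'_{k+1}\rangle .
\]
For the integrability of this pairing, truncate as in the proof of Lemma~\ref{lem:backward}. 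Using $\widehat y_k=-(\Delta f^\alpha_k+\widehat\varphi_k)$ and summing over $k\in\mathbb T$ yields
\[
\mathbb E\langle\widehat x_N,\widehat y_N\rangle-\langle\widehat x_0,\widehat y_0\rangle
=\sum_k\mathbb E\langle\Delta\Gamma^\alpha_k,\widehat\theta_k\rangle
+\sum_k\mathbb E\bigl[\langle\widehat\varphi_k,\widehat x_k\rangle+\langle\widehat\psi_k,\widehat y'_{k+1}\rangle+\langle\widehat\gamma_k,\widehat z'_{k+1}\rangle\bigr].
\]
Insert $\widehat y_N=\Delta\Phi^\alpha+\widehat\eta$ and $\widehat x_0=\Delta\Lambda^\alpha+\widehat\xi$, and apply the first orientation via Lemma~\ref{lem:preservation}. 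The monotone pieces all come with favourable sign, giving
\[
\nu\,\mathbb E|\mathcal G(\widehat x_N)|^2+\mu|M\widehat y_0|^2+\sum_k\bigl(\nu\,\mathbb E|\mathcal A_k|^2+\mu\,\mathbb E|\mathcal B_k|^2\bigr)
\le \mathbb E\bigl[\text{source pairings}\bigr],
\]
where the right-hand side consists of terms such as $-\mathbb E\langle\widehat x_N,\widehat\eta\rangle$, $\langle\widehat\xi,\widehat y_0\rangle$, and the three running source pairings. By Cauchy--Schwarz and \eqref{eq:projection-bound}, this right-hand side is at most $\varepsilon\,\mathbb E[\sum|\widehat x_k|^2+\sum|\widehat y_k|^2]+\varepsilon^{-1}\mathbb E\widehat J$.

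\textbf{Step 2 (closure, Case A).} Freeze the backward variables of the solution and view $\widehat x$ as the difference of two solutions of the forward recursion~\eqref{1}, whose coefficients differ by the $(Y,Z)$-increments. Lemma~\ref{lem:forward} (estimate~\eqref{3}) then bounds $\mathbb E\sum|\widehat x_k|^2$ by
\[
|\widehat\xi+\Lambda^\alpha(y_0)-\Lambda^\alpha(\bar y_0)|^2+\sum_k\mathbb E\bigl(|\widehat\psi_k|^2+|\widehat\gamma_k|^2\bigr)+\text{frozen $b^\alpha,\sigma^\alpha$ differences with $X$ fixed}.
\]
By Case~A domination (uniform in $\alpha$ by Lemma~\ref{lem:preservation}), the frozen differences are at most $\mu^{-2}\mathbb E|\mathcal B_k|^2$, and the initial difference is at most $\mu^{-2}|M\widehat y_0|^2$. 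Next, Lemma~\ref{lem:backward} (estimate~\eqref{7}) for the $y$-equation, with $x$ treated as part of the driver via Lipschitz continuity, gives
\[
\mathbb E\sum|\widehat y_k|^2\le K\,\mathbb E\Bigl[\sum|\widehat x_k|^2+\widehat J\Bigr],
\]
where the terminal term $|\Phi^\alpha(x_N)-\Phi^\alpha(\bar x_N)|^2$ is bounded by $L^2|\widehat x_N|^2$. Combining the two bounds gives
\[
\mathbb E\Bigl[\sum|\widehat x|^2+\sum|\widehat y|^2\Bigr]\le K\mu^{-2}\Bigl(\mu|M\widehat y_0|^2+\sum_k\mu\,\mathbb E|\mathcal B_k|^2\Bigr)\mu^{-1}+K\,\mathbb E\widehat J .
\]
Feeding in Step~1 and choosing $\varepsilon$ small relative to $K\mu^{-3}$ absorbs the energy and proves~\eqref{19}, with $K$ independent of $\alpha$.

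\textbf{Step 2 (closure, Case B).} This is symmetric. First apply Lemma~\ref{lem:backward} with $x$ frozen; the $f^\alpha$ and $\Phi^\alpha$ differences in $X$ are dominated by $\nu^{-2}\mathbb E|\mathcal A_k|^2$ and $\nu^{-2}\mathbb E|\mathcal G(\widehat x_N)|^2$. Then apply Lemma~\ref{lem:forward} to $x$, using Lipschitz continuity in $(\widehat y',\widehat z')$ together with \eqref{eq:projection-bound}.

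\textbf{Main obstacle.} The delicate point is the freezing in Step~2. The domination inequalities only control increments in which one group of variables is held fixed. So, for instance, $\Delta b^\alpha_k$ must be split as the increment at $(x_k,\bar y',\bar z')$ versus $(\bar x_k,\bar y',\bar z')$, which Lemma~\ref{lem:forward} propagates through Lipschitz continuity, plus the increment at fixed $x_k$, which is dominated by $\mathcal B_k$. This split has to be arranged so that the "source" fed into \eqref{3} is exactly the fixed-$X$ increment. I would also check carefully that the dependence on $\mathbb E[\theta]$ is handled consistently: the domination is stated for random $\Theta$ with expectations included, which is precisely what the frozen comparison requires.
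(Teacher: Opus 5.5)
Your proposal is correct and is essentially the paper's proof. The frozen one-sided estimates of Lemmas~\ref{lem:forward}--\ref{lem:backward}, together with the $\alpha$-uniform domination of Lemma~\ref{lem:preservation}, bound the solution energy by $C(D+\mu^{-2}Q_A)$ (or $C(D+\nu^{-2}Q_B)$). The duality identity plus monotonicity bounds $\mu Q_A+\nu Q_B$ by the source pairings as in~\eqref{30}, and absorption closes the estimate.

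The differences are only presentational. You put the duality step first, you apply the monotonicity of $(\Lambda^\alpha,\Phi^\alpha,\Gamma^\alpha)$ directly instead of re-expanding the $(1-\alpha)$ terms through \eqref{27}--\eqref{29}, and you use Young's inequality instead of the $\sqrt{2DS}$ bound \eqref{eq:source-pairing}. Your $\mu^{-3}$ factor in front of $\mu Q_A$ before absorption is the accurate one; the paper's displayed factor $\sqrt2C_*/c$ should read $\sqrt2C_*/c^3$, which is harmless since $K$ may depend on $\mu^{-1}$. The truncation you mention is unnecessary, because $Hw_k\in L^2$ whenever $H\in L^2_{\mathcal F_{k-1}}$.
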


\begin{proof}
	Here \(K\) has the dependence stated in the lemma and may vary; only the active
	coercivity parameter is inverted.
	
	\emph{Step 1: forward and backward perturbation estimates.}
	For compactness, define the two mixed triples
	\[
	\theta_k^{,y}:=(\bar x_k,y'_{k+1},z'_{k+1}),
	\qquad
	\theta_k^{,x}:=(x_k,\bar y'_{k+1},\bar z'_{k+1}).
	\]
	Freeze $(y'_{k+1},z'_{k+1})$ when applying Lemma~\ref{lem:forward}; the initial difference is
	\begin{flalign*}
		&
		\alpha(\Lambda(y_0)-\Lambda(\bar y_0))-(1-\alpha)\mu M^TM\hat y_0+\hat\xi,
		&
	\end{flalign*}
	while drift and diffusion are compared at the frozen state $\bar x_k$. Thus

\begin{flalign}
	&
	\mathbb{E}\left[\sum_{k=0}^{N}|\hat x_k|^2\right] \notag\\
	&\le K\mathbb{E}\Bigg[
	\Big|\alpha(\Lambda(y_0)-\Lambda(\bar y_0))-(1-\alpha)\mu M^TM\hat y_0+\hat\xi\Big|^2 \notag\\
	&+\sum_{k=0}^{N-1}\Big|\alpha\big(b(k,\theta_k^{,y},\mathbb{E}[\theta_k^{,y}]) \notag\\
	&\qquad-b(k,\bar\theta_k,\mathbb{E}[\bar\theta_k])\big)
	-(1-\alpha)\mu\Big(B_k^T(B_k\hat y_{k+1}'^{(1)}+C_k\hat z_{k+1}'^{(1)}) \notag\\
	&\qquad+\widetilde B_k^T(\widetilde B_k\hat y_{k+1}'^{(2)}+\widetilde C_k\hat z_{k+1}'^{(2)})\Big)
	+\hat\psi_k\Big|^2 \notag\\
	&+\sum_{k=0}^{N-1}\Big|\alpha\big(\sigma(k,\theta_k^{,y},\mathbb{E}[\theta_k^{,y}]) \notag\\
	&\qquad-\sigma(k,\bar\theta_k,\mathbb{E}[\bar\theta_k])\big)
	-(1-\alpha)\mu\Big(C_k^T(B_k\hat y_{k+1}'^{(1)}+C_k\hat z_{k+1}'^{(1)}) \notag\\
	&\qquad+\widetilde C_k^T(\widetilde B_k\hat y_{k+1}'^{(2)}+\widetilde C_k\hat z_{k+1}'^{(2)})\Big)
	+\hat\gamma_k\Big|^2\Bigg]. \label{21}
	&
\end{flalign}

	Conversely, freeze $(\bar y'_{k+1},\bar z'_{k+1})$ and regard $x_k$ as the parameter in Lemma~\ref{lem:backward}. This gives
\begin{flalign}
	&
	\mathbb{E}\left[\sum_{k=0}^{N}|\hat y_k|^2\right] \notag\\
	&\le K\mathbb{E}\Bigg[\Big|\alpha(\Phi(x_N,\mathbb{E}[x_N])-\Phi(\bar x_N,\mathbb{E}[\bar x_N])) \notag\\
	&\qquad +(1-\alpha)\nu(G^TG\hat x_N^{(1)}+\widetilde G^T\widetilde G\hat x_N^{(2)})+\hat\eta\Big|^2 \notag\\
	&+\sum_{k=0}^{N-1}\Big|\alpha\big(f(k+1,\theta_k^{,x},\mathbb{E}[\theta_k^{,x}]) \notag\\
	&\qquad-f(k+1,\bar\theta_k,\mathbb{E}[\bar\theta_k])\big) \notag\\
	&\qquad-(1-\alpha)\nu(A_k^TA_k\hat x_k^{(1)}+\widetilde A_k^T\widetilde A_k\hat x_k^{(2)})
	+\hat\varphi_k\Big|^2\Bigg]. \label{22}
	&
\end{flalign}
	These two displays fix the comparison convention used below.
The frozen coefficient maps satisfy the one-sided assumptions: their random
parameters are predictable and square integrable, the zero-argument values
are square integrable by the growth bounds, and the Lipschitz constants in
the remaining arguments are unchanged. The forward initial value is
deterministic up to almost-sure equality. Joint measurability and the common
Lipschitz null set justify evaluation at these random parameters.

	The way in which the active parameter enters is now explicit. Define the
	nonnegative combination energies
	\begin{flalign}\label{eq:combination-energies}
		&
		\begin{aligned}
			Q_A={}&|M\widehat y_0|^2
			+\sum_{k=0}^{N-1}\mathbb E\bigl|\mathcal B_k(\widehat y'_{k+1},\widehat z'_{k+1})\bigr|^2,\\
			Q_B={}&\mathbb E|G\widehat x_N^{(1)}+\widetilde G\widehat x_N^{(2)}|^2
			+\sum_{k=0}^{N-1}\mathbb E\bigl|\mathcal A_k(\widehat x_k)\bigr|^2,
		\end{aligned}
		&
	\end{flalign}
	where $\mathcal A_k(\widehat x_k)=A_k\widehat x_k^{(1)}+\widetilde A_k\widehat x_k^{(2)}$
	and $\mathcal B_k(\widehat y'_{k+1},\widehat z'_{k+1})=
	B_k\widehat y_{k+1}'^{(1)}+C_k\widehat z_{k+1}'^{(1)}
	+\widetilde B_k\widehat y_{k+1}'^{(2)}+\widetilde C_k\widehat z_{k+1}'^{(2)}$.
	In Case~A the frozen coercivity term in~\eqref{21} scales as $(1-\alpha)\mu$,
	while domination bounds each coefficient difference by $\mu^{-1}$ times its
	combination; the size condition~(iv) then collapses the two contributions and
	\eqref{22} closes the backward recursion. Hence
	\begin{flalign}\label{eq:mu-dependence}
		&
		\begin{aligned}
			\mathbb E\sum_{k=0}^{N}|\widehat x_k|^2
			&\le C\Bigl(D+\frac{Q_A}{\mu^{2}}\Bigr),\\
			\mathbb E\sum_{k=0}^{N}|\widehat y_k|^2
			&\le C\Bigl(D+\mathbb E\sum_{k=0}^{N}|\widehat x_k|^2\Bigr),
		\end{aligned}
		&
	\end{flalign}
	with $D=\mathbb E[\widehat J]$, and in Case~B the roles are exchanged and
	$\mu^{-2}$ is replaced by $\nu^{-2}$:
	\begin{flalign}\label{eq:nu-dependence}
		&
		\begin{aligned}
			\mathbb E\sum_{k=0}^{N}|\widehat y_k|^2
			&\le C\Bigl(D+\frac{Q_B}{\nu^{2}}\Bigr),\\
			\mathbb E\sum_{k=0}^{N}|\widehat x_k|^2
			&\le C\Bigl(D+\mathbb E\sum_{k=0}^{N}|\widehat y_k|^2\Bigr).
		\end{aligned}
		&
	\end{flalign}
	The constant $C$ depends only on $N$, the common Lipschitz bound, and the
	deterministic matrix bounds; the active parameter appears only through the
	squared factor multiplying $Q_A$ (respectively $Q_B$). This is the precise
	sense in which the constant $K$ of Lemma~\ref{lem:apriori} depends on $\mu$ and
	$\nu$, and it shows that the dependence enters through $\mu^{-2}$ or
	$\nu^{-2}$ alone. The same statements are used, with exactly this meaning of
	$K$, in Lemma~\ref{lem:continuation} and in the completion of
	Theorem~\ref{thm:main}.

	\emph{Step 2: monotonicity identity.}
	Let $H\in L^2_{\mathcal F_{k-1}}$ and $Y\in L^2_{\mathcal F_k}$. Since $H$ is
$\mathcal F_{k-1}$-measurable and $\mathbb E[w_k^2\mid\mathcal F_{k-1}]=1$, the
tower property gives
\[
\mathbb E|Hw_k|^2=\mathbb E\bigl[H^2\,\mathbb E[w_k^2\mid\mathcal F_{k-1}]\bigr]
=\mathbb E|H|^2 ,
\]
so $Hw_k\in L^2$. Cauchy--Schwarz then makes $\langle Hw_k,Y\rangle$
integrable, and pulling out the $\mathcal F_{k-1}$-measurable factor $H$ is
legitimate:
\[
\mathbb E\langle Hw_k,Y\rangle
=\mathbb E\bigl[\bigl\langle H,\mathbb E[Yw_k\mid\mathcal F_{k-1}]\bigr\rangle\bigr]
=\mathbb E\langle H,Q_kY\rangle .
\]
No boundedness or truncation of $H$ is needed, but the hypothesis
$H\in L^2_{\mathcal F_{k-1}}$ is essential, since the argument uses
$\mathcal F_{k-1}$-measurability of $H$ twice; this is precisely where the
discrete-time conditional projection replaces the continuous-time It\^o pairing.
Note also that $z_{k+1}=y_{k+1}w_k$ is only required to lie in $L^1$: for
$y_{k+1}\in L^2_{\mathcal F_k}$ we have $\mathbb E|y_{k+1}w_k|\le
\|y_{k+1}\|_{L^2}\|w_k\|_{L^2}<\infty$, whereas $\mathbb E|y_{k+1}w_k|^2$ may be
infinite because $y_{k+1}$ is not $\mathcal F_{k-1}$-measurable. The projected
variable $Q_kY=\mathbb E[Yw_k\mid\mathcal F_{k-1}]$, by contrast, lies in $L^2$
with $\mathbb E|Q_kY|^2\le\mathbb E|Y|^2$ by conditional Cauchy--Schwarz. This is
why the coercivity--domination pairing is closed at the level of the projected
products $Q_kY$ and not of the raw products $Yw_k$. The corresponding identity
without $w_k$ uses $P_kY$. Thus no martingale representation property or fourth
moment of the innovations is used. The telescoping identity gives
	\begin{flalign}\label{23}
		&
		\begin{aligned}
		&\mathbb{E}[\langle\hat x_N,\hat y_N\rangle-\langle\hat x_0,\hat y_0\rangle]\\
		&
		=\sum_{k=0}^{N-1}\mathbb{E}[\langle\hat x_{k+1},\hat y_{k+1}\rangle-\langle\hat x_k,\hat y_k\rangle].
		\end{aligned}
		&
	\end{flalign}
	
	The left-hand side can be written as
	\begin{flalign}\label{24}
		&\mathbb{E}\left[\langle \hat{x}_N, \hat{y}_N \rangle - \langle \hat{x}_0, \hat{y}_0 \rangle\right] \nonumber\\
		&= \mathbb{E}\Bigg[\Big\langle \hat{x}_N, \alpha\Big( \Phi(x_N, \mathbb{E}[x_N]) - \Phi(\bar{x}_N, \mathbb{E}[\bar{x}_N]) \Big) \nonumber\\
		&\quad + (1-\alpha)\nu\Big( G^T G \hat{x}_N^{(1)} + \widetilde{G}^T \widetilde{G} \hat{x}_N^{(2)} \Big) + \hat{\eta} \Big\rangle \nonumber\\
		&\quad - \Big\langle \alpha\Big( \Lambda(y_0) - \Lambda(\bar{y}_0) \Big) - (1-\alpha)\mu M^T M \hat{y}_0 \nonumber\\
		&\quad + \hat{\xi}, \hat{y}_0 \Big\rangle\Bigg] \displaybreak[3]\nonumber\\
		&= \alpha\mathbb{E}\Bigg[\Big\langle \hat{x}_N, \Phi(x_N, \mathbb{E}[x_N]) - \Phi(\bar{x}_N, \mathbb{E}[\bar{x}_N]) \Big\rangle\Bigg] \nonumber\\
		&\quad + (1-\alpha)\nu\mathbb{E}\Bigg[\Big\langle \hat{x}_N, G^T G \hat{x}_N^{(1)} + \widetilde{G}^T \widetilde{G} \hat{x}_N^{(2)} \Big\rangle\Bigg] \nonumber\\
		&\quad + \mathbb{E}\Bigg[\langle \hat{x}_N, \hat{\eta} \rangle\Bigg] - \mathbb{E}\Bigg[\Big\langle \alpha\Big( \Lambda(y_0) - \Lambda(\bar{y}_0) \Big) \nonumber\\
		&\quad - (1-\alpha)\mu M^T M \hat{y}_0 + \hat{\xi}, \hat{y}_0 \Big\rangle\Bigg].&
	\end{flalign}
	
	Similarly, the right-hand side equals
	\begin{flalign}\label{25}
		&
		\sum_{k=0}^{N-1} \mathbb{E}\left[ \langle \hat{x}_{k+1}, \hat{y}_{k+1} \rangle - \langle \hat{x}_k, \hat{y}_k \rangle \right] \nonumber \\
		&= \sum_{k=0}^{N-1} \mathbb{E}\Bigg[ \mathbb{E}\Big[\Big\langle \hat{b}_k^\alpha + \hat{\psi}_k + (\hat{\sigma}_k^\alpha + \hat{\gamma}_k) w_k, \hat{y}_{k+1} \Big\rangle \Big| \mathcal{F}_{k-1} \Big] \nonumber \\
		&\quad- \langle \hat{x}_k, \hat{y}_k \rangle \Bigg] \nonumber \\[0.5em]
		&= \sum_{k=0}^{N-1} \mathbb{E}\Bigg[ \langle \hat{b}_k^\alpha, \hat{y}_{k+1}' \rangle + \langle \hat{\psi}_k, \hat{y}_{k+1}' \rangle + \langle \hat{\sigma}_k^\alpha, \hat{z}_{k+1}' \rangle \nonumber \\
		&\quad+ \langle \hat{\gamma}_k, \hat{z}_{k+1}' \rangle + \langle \hat{f}_{k+1}^\alpha, \hat{x}_k \rangle + \langle \hat{x}_k, \hat{\varphi}_k \rangle \Bigg] \nonumber \\[0.5em]
		&= \sum_{k=0}^{N-1} \mathbb{E}\Bigg[
		\Big\langle \alpha \hat{b}_k - (1-\alpha)\mu \Big[ B_k^T\big( B_k \hat{y}_{k+1}'^{(1)} + C_k \hat{z}_{k+1}'^{(1)} \big) \nonumber \\
		&\quad+ \widetilde{B}_k^T \Big( \widetilde{B}_k \hat{y}_{k+1}'^{(2)} + \widetilde{C}_k \hat{z}_{k+1}'^{(2)} \Big) \Big], \hat{y}_{k+1}' \Big\rangle \nonumber \\
		&\quad+ \left\langle \alpha \hat{\sigma}_k - (1-\alpha)\mu \Big[ C_k^T\big( B_k \hat{y}_{k+1}'^{(1)} + C_k \hat{z}_{k+1}'^{(1)} \big) \right. \nonumber \\
		&\quad+ \widetilde{C}_k^T\big( \widetilde{B}_k \hat{y}_{k+1}'^{(2)} + \widetilde{C}_k \hat{z}_{k+1}'^{(2)} \big) \Big], \hat{z}_{k+1}' \Big\rangle \nonumber \\
		&\quad+ \left\langle \alpha \hat{f}_{k+1} - (1-\alpha)\nu \big( A_k^T A_k \hat{x}_k^{(1)} + \widetilde{A}_k^T \widetilde{A}_k \hat{x}_k^{(2)} \big), \hat{x}_k \right\rangle \nonumber \\
		&\quad+ \langle \hat{\psi}_k, \hat{y}_{k+1}' \rangle + \langle \hat{\gamma}_k, \hat{z}_{k+1}' \rangle + \langle \hat{x}_k, \hat{\varphi}_k \rangle \Bigg] \displaybreak[3]\nonumber \\[0.5em]
		&= \sum_{k=0}^{N-1} \mathbb{E}\Bigg[
		\langle \alpha \hat{f}_{k+1}, \hat{x}_k \rangle + \langle \alpha \hat{b}_k, \hat{y}_{k+1}' \rangle + \langle \alpha \hat{\sigma}_k, \hat{z}_{k+1}' \rangle \nonumber \\
		&\quad- (1-\alpha)\Bigg( \nu\left\langle A_k^T A_k \hat{x}_k^{(1)} + \widetilde{A}_k^T \widetilde{A}_k \hat{x}_k^{(2)}, \hat{x}_k \right\rangle \nonumber \\
		&\quad+ \mu\left\langle B_k^T\big( B_k \hat{y}_{k+1}'^{(1)} + C_k \hat{z}_{k+1}'^{(1)} \big) \right. \nonumber \\
		&\quad+ \widetilde{B}_k^T\big( \widetilde{B}_k \hat{y}_{k+1}'^{(2)} + \widetilde{C}_k \hat{z}_{k+1}'^{(2)} \big), \hat{y}_{k+1}' \Big\rangle \nonumber \\
		&\quad+ \mu\left\langle C_k^T\big( B_k \hat{y}_{k+1}'^{(1)} + C_k \hat{z}_{k+1}'^{(1)} \big) \right. \nonumber \\
		&\quad+ \widetilde{C}_k^T\big( \widetilde{B}_k \hat{y}_{k+1}'^{(2)} + \widetilde{C}_k \hat{z}_{k+1}'^{(2)} \big), \hat{z}_{k+1}' \Big\rangle\Bigg) \nonumber \\
		&\quad+ \langle \hat{\psi}_k, \hat{y}_{k+1}' \rangle + \langle \hat{\gamma}_k, \hat{z}_{k+1}' \rangle + \langle \hat{x}_k, \hat{\varphi}_k \rangle \Bigg] \displaybreak[3] \nonumber \\[0.5em]
		&= \sum_{k=0}^{N-1} \alpha \mathbb{E}\Bigg[ \big\langle \Gamma(k,\theta,\mathbb{E}[\theta])
		- \Gamma(k,\bar{\theta},\mathbb{E}[\bar{\theta}]), \hat{\theta} \big\rangle \Bigg] \nonumber \\
		&\quad- \sum_{k=0}^{N-1} (1-\alpha)\mathbb{E}\Bigg[ \nu\Big\langle A_k^T A_k \hat{x}_k^{(1)}
		+ \widetilde{A}_k^T \widetilde{A}_k \hat{x}_k^{(2)}, \hat{x}_k \Big\rangle \nonumber \\
		&\quad+ \mu\Big\langle B_k^T\big(B_k \hat{y}_{k+1}'^{(1)} + C_k \hat{z}_{k+1}'^{(1)}\big) \nonumber \\
		&\quad+ \widetilde{B}_k^T\big(\widetilde{B}_k \hat{y}_{k+1}'^{(2)}
		+ \widetilde{C}_k \hat{z}_{k+1}'^{(2)}\big), \hat{y}_{k+1}' \Big\rangle \nonumber \\
		&\quad+ \mu\Big\langle C_k^T\big(B_k \hat{y}_{k+1}'^{(1)} + C_k \hat{z}_{k+1}'^{(1)}\big) \nonumber \\
		&\quad+ \widetilde{C}_k^T\big(\widetilde{B}_k \hat{y}_{k+1}'^{(2)}
		+ \widetilde{C}_k \hat{z}_{k+1}'^{(2)}\big), \hat{z}_{k+1}' \Big\rangle\Bigg] \nonumber \\
		&\quad+ \sum_{k=0}^{N-1} \mathbb{E}\Bigg[ \langle \hat{\psi}_k, \hat{y}_{k+1}' \rangle
		+ \langle \hat{\gamma}_k, \hat{z}_{k+1}' \rangle
		+ \langle \hat{\varphi}_k, \hat{x}_k \rangle \Bigg],
		&
	\end{flalign}
	where, more precisely,
		\begin{flalign*}
			&
			\begin{aligned}
				\widehat b_k
				&=b(k,\theta_k,\mathbb E[\theta_k])
				-b(k,\bar\theta_k,\mathbb E[\bar\theta_k]),\\
				\widehat \sigma_k
				&=\sigma(k,\theta_k,\mathbb E[\theta_k])
				-\sigma(k,\bar\theta_k,\mathbb E[\bar\theta_k]),\\
				\widehat f_{k+1}
				&=f(k+1,\theta_k,\mathbb E[\theta_k])
				-f(k+1,\bar\theta_k,\mathbb E[\bar\theta_k]),
			\end{aligned}
			&
		\end{flalign*}
		and \(\widehat\Gamma_k=(\widehat f_{k+1},\widehat b_k,\widehat\sigma_k)\). Substituting (\ref{24}) and (\ref{25}) into the identity (\ref{23}), and rearranging terms, we obtain:
	\begin{flalign}\label{26}
		&
		\begin{aligned}
		&\alpha\mathbb{E}\langle\hat x_N,\Phi(x_N,\mathbb{E}[x_N])-\Phi(\bar x_N,\mathbb{E}[\bar x_N])\rangle\\
		&\quad+(1-\alpha)\nu\mathbb{E}\langle\hat x_N,G^TG\hat x_N^{(1)}+\widetilde G^T\widetilde G\hat x_N^{(2)}\rangle\\
		&=\alpha\mathbb{E}\langle\Lambda(y_0)-\Lambda(\bar y_0),\hat y_0\rangle
		-(1-\alpha)\mu\mathbb{E}|M\hat y_0|^2\\
		&\quad+\mathbb{E}\langle\hat\xi,\hat y_0\rangle-\mathbb{E}\langle\hat x_N,\hat\eta\rangle\\
		&\quad+\sum_{k=0}^{N-1}\Bigg\{
		\alpha\mathbb{E}\langle\Gamma(k,\theta_k,\mathbb{E}[\theta_k])-\Gamma(k,\bar\theta_k,\mathbb{E}[\bar\theta_k]),\hat\theta_k\rangle\\
		&\quad-(1-\alpha)\nu\mathbb{E}\langle A_k^TA_k\hat x_k^{(1)}+\widetilde A_k^T\widetilde A_k\hat x_k^{(2)},\hat x_k\rangle\\
		&\quad-(1-\alpha)\mu\mathbb{E}\langle B_k^T(B_k\hat y_{k+1}'^{(1)}+C_k\hat z_{k+1}'^{(1)})\\
		&\quad+\widetilde B_k^T(\widetilde B_k\hat y_{k+1}'^{(2)}+\widetilde C_k\hat z_{k+1}'^{(2)}),\hat y_{k+1}'\rangle\\
		&\quad-(1-\alpha)\mu\mathbb{E}\langle C_k^T(B_k\hat y_{k+1}'^{(1)}+C_k\hat z_{k+1}'^{(1)})\\
		&\quad+\widetilde C_k^T(\widetilde B_k\hat y_{k+1}'^{(2)}+\widetilde C_k\hat z_{k+1}'^{(2)}),\hat z_{k+1}'\rangle\\
		&\quad+\mathbb{E}\big[\langle\hat\psi_k,\hat y_{k+1}'\rangle+\langle\hat\gamma_k,\hat z_{k+1}'\rangle+\langle\hat\varphi_k,\hat x_k\rangle\big]\Bigg\}.
		\end{aligned}
		&
	\end{flalign}
	Determinism of the domination matrices gives, for example,
	\begin{flalign}\label{27}
		&
		\begin{aligned}
		&\mathbb{E}\langle\hat x_N,G^TG\hat x_N^{(1)}+\widetilde G^T\widetilde G\hat x_N^{(2)}\rangle\\
		&=\mathbb{E}|G\hat x_N^{(1)}|^2+\mathbb{E}|\widetilde G\hat x_N^{(2)}|^2\\
		&=\mathbb{E}|G\hat x_N^{(1)}+\widetilde G\hat x_N^{(2)}|^2,
		\end{aligned}
		&
	\end{flalign}
	and, for every $k$,
	\begin{flalign}\label{28}
		&
		\begin{aligned}
		&\mathbb{E}\langle A_k^TA_k\hat x_k^{(1)}+\widetilde A_k^T\widetilde A_k\hat x_k^{(2)},\hat x_k\rangle\\
		&=\mathbb{E}|A_k\hat x_k^{(1)}|^2+\mathbb{E}|\widetilde A_k\hat x_k^{(2)}|^2\\
		&=\mathbb{E}|A_k\hat x_k^{(1)}+\widetilde A_k\hat x_k^{(2)}|^2,
		\end{aligned}
		&
	\end{flalign}
	\begin{flalign}\label{29}
		&
		\begin{aligned}
		&\mathbb{E}\Big[\langle B_k^T U_k+\widetilde B_k^T\widetilde U_k,\hat y_{k+1}'\rangle
		+\langle C_k^T U_k+\widetilde C_k^T\widetilde U_k,\hat z_{k+1}'\rangle\Big]\\
		&=\mathbb{E}|U_k|^2+\mathbb{E}|\widetilde U_k|^2
		=\mathbb{E}|U_k+\widetilde U_k|^2,
		\end{aligned}
		&
	\end{flalign}
	where $U_k:=B_k\hat y_{k+1}'^{(1)}+C_k\hat z_{k+1}'^{(1)}$ and $\widetilde U_k:=\widetilde B_k\hat y_{k+1}'^{(2)}+\widetilde C_k\hat z_{k+1}'^{(2)}$. The mixed products vanish because $U_k$ is centered while $\widetilde U_k$ is deterministic.
	
	Insert~\eqref{27}--\eqref{29} into~\eqref{26} and use the first monotonicity alternative:
	\begin{flalign}\label{30}
		&
		\begin{aligned}
		&\mathbb{E}\Bigg[\nu|G\hat x_N^{(1)}+\widetilde G\hat x_N^{(2)}|^2
		+\nu\sum_{k=0}^{N-1}|A_k\hat x_k^{(1)}+\widetilde A_k\hat x_k^{(2)}|^2\\
		&\quad+\mu|M\hat y_0|^2+\mu\sum_{k=0}^{N-1}|B_k\hat y_{k+1}'^{(1)}+C_k\hat z_{k+1}'^{(1)}+\widetilde B_k\hat y_{k+1}'^{(2)}\\
		&\quad+\widetilde C_k\hat z_{k+1}'^{(2)}|^2\Bigg]\\
		&\leq\mathbb{E}\Bigg[-\langle\hat x_N,\hat\eta\rangle+\langle\hat\xi,\hat y_0\rangle
		+\sum_{k=0}^{N-1}\big(\langle\hat\psi_k,\hat y_{k+1}'\rangle+\langle\hat\gamma_k,\hat z_{k+1}'\rangle\\
		&\quad+\langle\hat\varphi_k,\hat x_k\rangle\big)\Bigg].
		\end{aligned}
		&
	\end{flalign}
	
\emph{Step 3: closure and absorption.}
Put
\[
S=\mathbb E\sum_{k=0}^{N}(|\widehat x_k|^2+|\widehat y_k|^2),
\qquad D=\mathbb E\widehat J,
\]
and recall the nonnegative combination energies $Q_A,Q_B$ defined
in~\eqref{eq:combination-energies}.
The initial term is deterministic up to almost-sure equality. By
\eqref{eq:projection-bound}, the squared norm of the solution vector paired
with the sources on the right of~\eqref{30} is bounded by
\begin{align*}
&\mathbb E|\widehat x_N|^2+|\widehat y_0|^2\\
&\quad+\sum_{k=0}^{N-1}\mathbb E\bigl(|\widehat x_k|^2
+|\widehat y'_{k+1}|^2+|\widehat z'_{k+1}|^2\bigr)\le2S.
\end{align*}
Cauchy--Schwarz on the product space of sources therefore gives
\begin{equation}\label{eq:source-pairing}
\mu Q_A+\nu Q_B\le\sqrt{2DS}.
\end{equation}
This bound uses no inverse matrix or full-rank assumption.

In Case~A, the preserved domination bounds in~\eqref{21} and the Lipschitz
bounds in~\eqref{22} yield \eqref{eq:mu-dependence}, that is,
\begin{align*}
\mathbb E\sum_{k=0}^{N}|\widehat x_k|^2&\le C\Bigl(D+\frac{Q_A}{\mu^2}\Bigr),\\
\mathbb E\sum_{k=0}^{N}|\widehat y_k|^2&\le C\left(D+
\mathbb E\sum_{k=0}^{N}|\widehat x_k|^2\right).
\end{align*}
Consequently $S\le C_A(D+Q_A/\mu^2)$ for a fixed $C_A$ independent of $\alpha$.
In Case~B the roles reverse and \eqref{eq:nu-dependence} gives $S\le
C_B(D+Q_B/\nu^2)$. The constants depend only on the structural bounds stated in
the lemma. Kernel directions do not enter the dominated coefficient differences
and are controlled by the companion estimate.

In either case let $c$ be the active parameter, $Q$ the corresponding energy,
and $C_*$ the fixed closure constant. Then~\eqref{eq:source-pairing} gives
\begin{align*}
S&\le C_*D+\frac{\sqrt2C_*}{c}\sqrt{DS}\\
 &\le C_*D+\tfrac12 S+\frac{C_*^2}{c^2}D.
\end{align*}
Thus $S\le(2C_*+2C_*^2/c^2)D$, proving~\eqref{19}. This explicitly identifies
the absorption and its independence of $\alpha$; it does not give a bound
uniform as $c\downarrow0$.
\end{proof}

\begin{lemma}\label{lem:continuation}
	Under the first alternative, suppose~\eqref{16} is uniquely solvable at some
	\(\alpha_0<1\) for every source in \(\mathcal H(\overline{\mathbb T})\).
	There is a step \(\delta_0>0\), independent of \(\alpha_0\), for which the same
	is true at \(\alpha_0+\delta\le1\) whenever \(0<\delta\le\delta_0\).
\end{lemma}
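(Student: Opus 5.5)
We follow the standard Hu--Peng contraction argument, using Lemma~\ref{lem:apriori} as the uniform estimate. Fix \(\alpha_0\) and a source \((\xi,\eta,\rho)\in\mathcal H(\overline{\mathbb T})\). For \(\delta>0\), write the system~\eqref{16} at \(\alpha_0+\delta\) as the system at \(\alpha_0\) with a modified source. This works because~\eqref{17} is affine in \(\alpha\):
\[
\Gamma^{\alpha_0+\delta}=\Gamma^{\alpha_0}+\delta\,\Delta\Gamma,\qquad
\Delta\Gamma:=\Gamma+\bigl(\nu\mathcal A^{*}\mathcal A,\ \mu\mathcal B_b^{*}\mathcal B,\ \mu\mathcal B_\sigma^{*}\mathcal B\bigr),
\]
and analogously \(\Lambda^{\alpha_0+\delta}=\Lambda^{\alpha_0}+\delta(\Lambda+\mu M^TM\,\cdot)\) and \(\Phi^{\alpha_0+\delta}=\Phi^{\alpha_0}+\delta(\Phi-\nu(G^TG(\cdot)^{(1)}+\widetilde G^T\widetilde G(\cdot)^{(2)}))\). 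Here the starred terms denote exactly the linear correction terms appearing in~\eqref{17}, with signs arranged so that the identity holds.

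Given an input pair \((X,Y)\in N^2(\overline{\mathbb T};\mathbb R^{2n})\), form \(\Theta_k=(X_k,\mathbb E[Y_{k+1}\mid\mathcal F_{k-1}],\mathbb E[Y_{k+1}w_k\mid\mathcal F_{k-1}])\). Define the frozen source
\[
\xi^{X,Y}=\xi+\delta\Delta\Lambda(Y_0),\quad
\eta^{X,Y}=\eta+\delta\Delta\Phi(X_N,\mathbb E X_N),\quad
\rho_k^{X,Y}=\rho_k+\delta\,\Delta\Gamma(k,\Theta_k,\mathbb E\Theta_k).
\]
This source lies in \(\mathcal H(\overline{\mathbb T})\): \(\xi^{X,Y}\) is deterministic because \(Y_0\) is, and the measurability and square integrability follow from Assumption~\ref{ass:lipschitz}, the projection bounds~\eqref{eq:projection-bound}, and boundedness of the matrices. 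By the hypothesis at \(\alpha_0\), there is a unique solution \((x,y)=:\mathcal T(X,Y)\) of~\eqref{16} at \(\alpha_0\) with this source. A fixed point of \(\mathcal T\) is exactly a solution at \(\alpha_0+\delta\) with source \((\xi,\eta,\rho)\). Conversely, any solution at \(\alpha_0+\delta\) is a fixed point, so uniqueness at \(\alpha_0+\delta\) follows from uniqueness of the fixed point.

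For the contraction, take two inputs \((X,Y)\) and \((\bar X,\bar Y)\) and apply Lemma~\ref{lem:apriori} at \(\alpha_0\). This is legitimate since by Lemma~\ref{lem:preservation} the coefficients at \(\alpha_0\) satisfy the assumptions with \(\alpha\)-independent constants, so \(K\) does not depend on \(\alpha_0\). The lemma bounds \(\|\mathcal T(X,Y)-\mathcal T(\bar X,\bar Y)\|^2\) by \(K\) times the source difference energy, which equals
\[
\delta^2\,\mathbb E\Bigl[|\widehat{\Delta\Lambda}|^2+|\widehat{\Delta\Phi}|^2+\sum_k|\widehat{\Delta\Gamma}_k|^2\Bigr].
\]
The \(\Delta\)-maps are Lipschitz with a constant \(L'\) depending only on \(L,\mu,\nu\) and \(\kappa\)'s. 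With the projection bounds \(\mathbb E|\widehat Y'_{k+1}|^2,\ \mathbb E|\widehat Z'_{k+1}|^2\le\mathbb E|\widehat Y_{k+1}|^2\) and \(|\mathbb E\widehat X|^2\le\mathbb E|\widehat X|^2\), the bracket is at most \(C' \|(\widehat X,\widehat Y)\|^2_{N^2}\), where \(C'\) depends only on structural constants. Hence \(\mathcal T\) is Lipschitz with constant \(\delta\sqrt{KC'}\). Choosing \(\delta_0=1/(2\sqrt{KC'})\), which is independent of \(\alpha_0\), makes \(\mathcal T\) a contraction on the Banach space \(N^2\), and Banach's fixed point theorem concludes.

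The only delicate point is the uniformity of \(K\) in \(\alpha_0\), which is precisely the content of Lemma~\ref{lem:apriori}. A secondary check is that the frozen source is admissible. The initial term must be deterministic, which holds since \(\Lambda\) is deterministic on \(\mathbb R^n\) and \(Y_0\) is \(\mathcal F_{-1}\)-measurable. The running terms must be \(\mathcal F_{k-1}\)-measurable, which holds because the projections are.
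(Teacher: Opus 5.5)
Your proposal is correct and follows the paper's proof essentially step for step. You recast the $\alpha_0+\delta$ system as the $\alpha_0$ system with a frozen, input-dependent source, which is the paper's \eqref{39}--\eqref{40} with the roles of the upper- and lower-case pairs swapped; you then use the $\alpha$-uniform constant of Lemma~\ref{lem:apriori} to get a Lipschitz bound of order $\delta$ for the solution map, and conclude by Banach's theorem with $\delta_0$ independent of $\alpha_0$. Your explicit observation that every solution at $\alpha_0+\delta$ is a fixed point, so that uniqueness transfers, only spells out what the paper states ``by construction.''
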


\begin{proof}
	Let $\delta_0 > 0$ be determined below. For any $(x(\cdot), y(\cdot)) \in N^2(\overline{\mathbb{T}}; \mathbb{R}^{2n})$, $\theta(k) = (x_k, y'_{k+1}, z'_{k+1})$ and $\Theta(k) = (X_k, Y'_{k+1},$\\$ Z'_{k+1})$, we introduce the following MF-FBS$\Delta$E with unknown $(X(\cdot), Y(\cdot)) \in N^2(\overline{\mathbb{T}}; \mathbb{R}^{2n})$:
	
	\begin{flalign}\label{39}
		&
		\left\{
		\begin{aligned}
			X_{k+1} &=  \Bigg\{
			-(1-\alpha_0)\mu \Big[
			B_k^\top \big(B_k  Y_{k+1}'^{(1)} + C_k  Z_{k+1}'^{(1)}\big) \\
			& \quad + \widetilde{B}_k^\top \big(\widetilde{B}_k Y_{k+1}'^{(2)} + \widetilde{C}_k Z_{k+1}'^{(2)}\big)
			\Big] \\
			& \quad + \alpha_0 b(k, \Theta_k, \mathbb{E}[\Theta_k]) + \widetilde{\psi}_k
			\Bigg\} \\
			& \quad+ \Bigg\{
			-(1-\alpha_0)\mu \Big[
			C_k^\top \big(B_k Y_{k+1}'^{(1)} + C_k Z_{k+1}'^{(1)}\big) \\
			& \quad + \widetilde{C}_k^\top \big(\widetilde{B}_k  Y_{k+1}'^{(2)} + \widetilde{C}_k Z_{k+1}'^{(2)}\big)
			\Big] \\
			& \quad + \alpha_0 \sigma(k, \Theta_k, \mathbb{E}[\Theta_k]) + \widetilde{\gamma}_k
			\Bigg\} w_k, \\
			Y_k = {} & -\Bigg\{
			-(1-\alpha_0)\nu \Big[
			A_k^\top A_k X_k^{(1)} + \widetilde{A}_k^\top \widetilde{A}_k X_k^{(2)}
			\Big] \\
			& + \alpha_0 f(k+1, \Theta_k, \mathbb{E}[\Theta_k]) + \widetilde{\varphi}_k
			\Bigg\}, \\
			X_0 = {} & \alpha_0 \Lambda(Y_0) - (1-\alpha_0)\mu M^\top M Y_0 + \widetilde{\xi}, \\
			Y_N = {} & \alpha_0 \Phi(X_N, \mathbb{E}[X_N]) + (1-\alpha_0)\nu \Big[
			G^\top G X_N^{(1)} \\
			&+ \widetilde{G}^\top \widetilde{G} X_N^{(2)}
			\Big] + \widetilde{\eta}.
		\end{aligned}
		\right.
		&
	\end{flalign}
	where
	\begin{flalign}\label{40}
		&\widetilde{\psi}_k = \psi_k + \delta b(k, \theta_k, \mathbb{E}[\theta_k]) \nonumber\\
		&\quad + \delta \mu \Bigg[B_k^\top \big(B_k y_{k+1}'^{(1)} + C_k z_{k+1}'^{(1)}\big) \nonumber\\
		&\qquad\qquad + \widetilde{B}_k^\top \big(\widetilde{B}_k y_{k+1}'^{(2)} + \widetilde{C}_k z_{k+1}'^{(2)}\big)\Bigg], \displaybreak[3]\nonumber\\
		&\widetilde{\gamma}_k = \gamma_k + \delta \sigma(k, \theta_k, \mathbb{E}[\theta_k]) \nonumber\\
		&\quad + \delta \mu \Bigg[C_k^\top \big(B_k y_{k+1}'^{(1)} + C_k z_{k+1}'^{(1)}\big) \nonumber\\
		&\qquad\qquad + \widetilde{C}_k^\top \big(\widetilde{B}_k y_{k+1}'^{(2)} + \widetilde{C}_k z_{k+1}'^{(2)}\big)\Bigg], \displaybreak[3]\nonumber\\
		&\widetilde{\varphi}_k = \varphi_k + \delta f(k+1, \theta_k, \mathbb{E}[\theta_k]) \nonumber\\
		&\quad + \delta \nu \Big[A_k^\top A_k x_k^{(1)} + \widetilde{A}_k^\top \widetilde{A}_k x_k^{(2)}\Big], \displaybreak[3]\nonumber\\
		&\widetilde{\eta} = \eta + \delta \Phi(x_N, \mathbb{E}[x_N])
		- \delta \nu \Big[G^\top G x_N^{(1)} + \widetilde{G}^\top \widetilde{G} x_N^{(2)}\Big], \nonumber\\
		&\widetilde{\xi} = \xi + \delta \Lambda(y_0) + \delta \mu M^\top M y_0, \nonumber\\
		&Z_{k+1}=Y_{k+1}w_k,\qquad
		Y_{k+1}'=\mathbb{E}[Y_{k+1}\mid\mathcal F_{k-1}], \nonumber\\
		&Z_{k+1}'=\mathbb{E}[Y_{k+1}w_k\mid\mathcal F_{k-1}].&
	\end{flalign}
	
	Furthermore, we also denote $\widetilde{\rho}(\cdot) = (\widetilde{\varphi}(\cdot), \widetilde{\psi}(\cdot), \widetilde{\gamma}(\cdot))$.
	The Lipschitz and square-integrability assumptions imply that $(\widetilde{\xi}, \widetilde{\eta}, \widetilde{\rho}) \in \mathcal{H}(\overline{\mathbb{T}})$.
	Solvability at \(\alpha_0\) makes~\eqref{39} uniquely solvable:
	$(X(\cdot), Y(\cdot)) \in N^2(\overline{\mathbb{T}}; \mathbb{R}^{2n})$.
	These observations associate with every trial pair $(x,y)$ a unique output
	pair $(X,Y)$. Denote this input--output operator by
	\(\mathcal{T}_{\alpha_0+\delta}:N^2(\overline{\mathbb{T}};\mathbb{R}^{2n})
	\to N^2(\overline{\mathbb{T}};\mathbb{R}^{2n})\), with
	\((X(\cdot),Y(\cdot))
	=\mathcal{T}_{\alpha_0+\delta}((x(\cdot),y(\cdot)))\).
	It remains to choose \(\delta\) so that this map contracts.
	
	Choose two inputs $(x,y)$ and $(\bar x,\bar y)$ in
	$N^2(\overline{\mathbb T};\mathbb R^{2n})$. Write their images as
	$(X,Y)=\mathcal T_{\alpha_0+\delta}(x,y)$ and
	$(\bar X,\bar Y)=\mathcal T_{\alpha_0+\delta}(\bar x,\bar y)$.
	Hats will now mean pairwise subtraction at either level; thus
	$\hat x=x-\bar x$, $\hat y=y-\bar y$, $\hat X=X-\bar X$, and
	$\hat Y=Y-\bar Y$, with the same convention for all derived quantities.
	Lemma~\ref{lem:apriori}, with the time argument suppressed, gives
	\begin{flalign*}
		&
			\mathbb{E}\left[ \sum_{k=0}^{N} |\hat{X}_k|^2
			+ \sum_{k=0}^{N} |\hat{Y}_k|^2 \right] \\
			& \le K \delta^2 \mathbb{E}\Bigg\{
			\Big| \Big( \Phi(x_N, \mathbb{E}[x_N]) - \Phi(\bar{x}_N, \mathbb{E}[\bar{x}_N]) \Big)\\
			&\quad - \nu \big( G^\top G \hat{x}_N^{(1)}
			+ \widetilde{G}^\top \widetilde{G} \hat{x}_N^{(2)} \big) \Big|^2 \\
			& \quad + \big| \Lambda(y_0) - \Lambda(\bar{y}_0)
			+ \mu M^\top M \hat{y}_0 \big|^2 \\
			& \quad + \sum_{k=0}^{N-1} \Big| \Big(
			b(k,\theta_k,\mathbb{E}[\theta_k]) - b(k,\bar{\theta}_k,\mathbb{E}[\bar{\theta}_k]) \Big) \\
			& \quad  + \mu \Big( B_k^\top (B_k \hat{y}_{k+1}'^{(1)} + C_k \hat{z}_{k+1}'^{(1)}) \\
			& \quad  + \widetilde{B}_k^\top (\widetilde{B}_k \hat{y}_{k+1}'^{(2)}
			+ \widetilde{C}_k \hat{z}_{k+1}'^{(2)}) \Big) \Big|^2 \displaybreak[3]\\
			& \quad + \sum_{k=0}^{N-1} \Big| \Big(
			\sigma(k,\theta_k,\mathbb{E}[\theta_k]) - \sigma(k,\bar{\theta}_k,\mathbb{E}[\bar{\theta}_k]) \Big) \\
			& \quad  + \mu \Big( C_k^\top (B_k \hat{y}_{k+1}'^{(1)} + C_k \hat{z}_{k+1}'^{(1)}) \\
			& \quad  + \widetilde{C}_k^\top (\widetilde{B}_k \hat{y}_{k+1}'^{(2)}
			+ \widetilde{C}_k \hat{z}_{k+1}'^{(2)}) \Big) \Big|^2 \\
			& \quad + \sum_{k=0}^{N-1} \Big| \Big(
			f(k{+}1,\theta_k,\mathbb{E}[\theta_k]) - f(k{+}1,\bar{\theta}_k,\mathbb{E}[\bar{\theta}_k]) \\
			& \quad  + \nu \big( A_k^\top A_k \hat{x}_k^{(1)}
			+ \widetilde{A}_k^\top \widetilde{A}_k \hat{x}_k^{(2)} \big) \Big) \Big|^2
			\Bigg\}.
			&
	\end{flalign*}
	Every term on the right is controlled by the common Lipschitz constant $L$
	and the deterministic bounds for
	$G,\widetilde G,M,$ \\ $A_k,\widetilde A_k,B_k,\widetilde B_k,C_k,\widetilde C_k$.
	Consequently one can choose $K'>0$, without dependence on either
	$\alpha_0$ or $\delta$, such that
	\begin{flalign*}
		&
		\left\| (\widehat{X}(\cdot), \widehat{Y}(\cdot)) \right\|_{N^2(\overline{\mathbb{T}}; \mathbb{R}^{2n})}^2
		\le K' \delta^2
		\left\| (\hat{x}(\cdot), \hat{y}(\cdot)) \right\|_{N^2(\overline{\mathbb{T}}; \mathbb{R}^{2n})}^2.
		&
	\end{flalign*}
	This is precisely the product norm of \(N^2(\overline{\mathbb T};\mathbb R^{2n})\).
	
	By selecting \(\delta_0 = 1/(2\sqrt{K'})\), when \(\delta \in (0, \delta_0]\), we obtain \(K'\delta^2\le 1/4\). Equivalently,
			\begin{flalign*}
				&
				\begin{aligned}
					&\|\mathcal T_{\alpha_0+\delta}(x,y)
					-\mathcal T_{\alpha_0+\delta}(\bar x,\bar y)\|_{N^2(\overline{\mathbb T};\mathbb R^{2n})}\\
					&\quad\le \frac12
					\|(x,y)-(\bar x,\bar y)\|_{N^2(\overline{\mathbb T};\mathbb R^{2n})}.
				\end{aligned}
				&
			\end{flalign*}
	Hence the ordinary norm contracts by at most \(1/2\) (equivalently, the
	squared norm by at most \(1/4\)). Banach's theorem supplies exactly one fixed
	point of \(\mathcal T_{\alpha_0+\delta}\); by construction, that fixed point
	is the unique solution of~\eqref{16}.
	
	It remains to verify that $\delta_0$ can be chosen \emph{independent of $\alpha_0$}. Indeed, the constant $K$ in Lemma~\ref{lem:apriori} depends only on $\mathbb{T}$, the Lipschitz constant $L$, $\mu$, $\nu$, and the uniform bounds of the matrices appearing in Assumption~\ref{ass:dm}. By the construction of the $\alpha$-family in (\ref{17}), the coefficients $(\Lambda^\alpha,\Phi^\alpha,\Gamma^\alpha)$ satisfy Assumptions~\ref{ass:lipschitz} and~\ref{ass:dm} with the \emph{same} constants $L,\mu,\nu$ and matrix bounds for all $\alpha\in[0,1]$. Consequently, Lemma~\ref{lem:apriori} yields a constant $K$ that is uniform in $\alpha$, and the derived $K'$ is also independent of $\alpha_0$. Hence $\delta_0$ does not depend on $\alpha_0$, and we can reach $\alpha=1$ from $\alpha=0$ in finitely many steps (at most $\lceil 1/\delta_0 \rceil$ steps). The proof is finished.
\end{proof}

\subsection{Completion of the principal theorem}
The preceding ingredients are now assembled in the order dictated by their
logical dependence.

\begin{proof}
Assume first that the first monotonicity alternative holds. At $\alpha=0$, equation~\eqref{18} is uniquely solvable: in Case~A one first solves the backward equation and then the forward equation, while in Case~B one proceeds in the reverse order. The structural size condition is used in Lemma~\ref{lem:preservation} to keep the domination constants unchanged along the whole continuation family. Lemma~\ref{lem:continuation}, whose step size is uniform in $\alpha$, therefore extends solvability from $\alpha=0$ to $\alpha=1$ in finitely many steps. With zero source terms, equation~\eqref{16} at $\alpha=1$ is exactly~\eqref{11}, proving existence. Uniqueness follows from Lemma~\ref{lem:apriori} with identical source terms.

For the stability estimate, regard the solution associated with $(\bar\Lambda,\bar\Phi,\bar\Gamma)$ as a solution of the $\alpha=1$ equation constructed from $(\Lambda,\Phi,\Gamma)$ with perturbation source terms
\begin{flalign*}
&\begin{aligned}
\xi^\Delta={}&\bar\Lambda(\bar y_0)-\Lambda(\bar y_0),\\
\eta^\Delta={}&\bar\Phi(\bar x_N,\mathbb E[\bar x_N])-\Phi(\bar x_N,\mathbb E[\bar x_N]),\\
\rho^\Delta_k={}&\bar\Gamma(k,\bar\theta_k,\mathbb E[\bar\theta_k])
-\Gamma(k,\bar\theta_k,\mathbb E[\bar\theta_k]).
\end{aligned}&
\end{flalign*}
	Although these differences have the reverse orientation from~\eqref{15}, only
	their squared norms enter Lemma~\ref{lem:apriori}; hence~\eqref{14}--\eqref{15}.

	For~\eqref{12}, compare $(x,y)$ with the zero pair, which solves the
	\(\alpha=1\) source equation with
\begin{flalign*}
&\xi^0=-\Lambda(0),\qquad \eta^0=-\Phi(0,0),&\\
&\varphi^0_k=-f(k+1,0,0),&\\
&\psi^0_k=-b(k,0,0),\qquad \gamma^0_k=-\sigma(k,0,0).&
\end{flalign*}
These sources belong to $\mathcal H(\overline{\mathbb T})$ by Assumption~\ref{ass:lipschitz}. Lemma~\ref{lem:apriori} then yields~\eqref{12}--\eqref{13}.

Suppose now that the second monotonicity alternative holds. Apply the sign transformation in Remark~\ref{rem:sign} to the reference coefficients and to both solution pairs. The transformed reference coefficients satisfy the first alternative, so the result just proved applies; the comparison pair is still an admissible solution with square-integrable evaluated differences. Since the transformation $(x,y)\mapsto(x,-y)$ is an isometry of $N^2(\overline{\mathbb T};\mathbb R^{2n})$ and leaves all squared source differences unchanged, existence, uniqueness, and estimates~\eqref{12} and~\eqref{14} transfer back to the original system.
\end{proof}

\section{Linear--Quadratic Buffer-Adjustment Illustration}\label{sec:application}
\subsection{Control problem and result}
Consider a one-dimensional finite-horizon buffer-adjustment illustration
motivated by aggregate interbank effects~\cite{CarmonaFouqueSun2015}.
The state $x_k$ is positive for surplus and negative for shortage; $u_k$ is
the adjustment chosen before the next innovation. The term $\mathbb E[u_k]$
describes unconditional average adjustment; it is deterministic and is not a
common-noise conditional aggregate (Section~\ref{sec:scope}). The dynamics are
\begin{flalign}\label{eq:lq-state}
&\begin{aligned}
x_{k+1}={}&a_kx_k+\alpha_ku_k+\bar\alpha_k\mathbb E[u_k]+d_k\\
&+\bigl(c_ku_k+\bar c_k\mathbb E[u_k]+\delta_k\bigr)w_k,
\qquad x_0=x,
\end{aligned}&
\end{flalign}
for $k\in\mathbb T$, and the cost functional
\begin{flalign}\label{eq:lq-cost}
&\begin{aligned}
J(u)=\frac12\mathbb E\Bigg[&\sum_{k=0}^{N-1}\Big(q_k|x_k^{(1)}|^2
+\bar q_k|x_k^{(2)}-m_k|^2\\
&+r_k|u_k^{(1)}|^2+(r_k+\bar r_k)|u_k^{(2)}|^2+2\ell_kx_k\Big)\\
&+h|x_N^{(1)}|^2+\bar h|x_N^{(2)}-m_N|^2+2\ell_Nx_N\Bigg],
\end{aligned}&
\end{flalign}
where $v^{(1)}=v-\mathbb E[v]$ and $v^{(2)}=\mathbb E[v]$. After taking expectations, the two control terms equal
$r_k\mathbb E|u_k|^2+\bar r_k|\mathbb E[u_k]|^2$.
The state cost penalizes variance and deviation of the population mean from
$m_k$; it does not impose an asymmetric individual shortage penalty, and the
model is a stylized convex adjustment problem rather than a systemic-risk
formulation (Section~\ref{sec:scope}).
Here $a_k$ describes buffer persistence, $(\alpha_k,\bar\alpha_k)$ measure the direct and aggregate effects of adjustment on the conditional mean, and $(c_k,\bar c_k)$ measure the corresponding innovation exposure. We specialize to $n=m=1$, so the admissible set $\mathbb U$ is understood to contain scalar controls.

\begin{assumption}\label{ass:lq}
The coefficient sequences $a_k,\alpha_k,\bar\alpha_k,c_k,\bar c_k$,
$q_k,\bar q_k,r_k,\bar r_k$ and the terminal weights $h,\bar h$ are deterministic and uniformly bounded. There exists $\lambda>0$ such that
\begin{flalign*}
	&
	r_k\ge\lambda,
	\qquad r_k+\bar r_k\ge\lambda,
	\qquad k\in\mathbb T.
	&
\end{flalign*}
Moreover, $q_k,\bar q_k,h,\bar h\ge0$; the processes $d(\cdot)$ and $\delta(\cdot)$ belong to $L^2_{\mathbb F}(\mathbb T;\mathbb R)$ and are therefore $\mathcal F_{k-1}$-measurable at time $k$; and $m_k,\ell_k$ ($k\in\overline{\mathbb T}$) are deterministic and bounded. The coefficient \(\alpha_k\) is unrelated to the continuation parameter \(\alpha\) used in Section~\ref{sec:proofs}.
\end{assumption}

\begin{problem}\label{prob:lq}
Find $u^*\in\mathbb U$ such that $J(u^*)=\inf_{u\in\mathbb U}J(u)$ subject to~\eqref{eq:lq-state}.
\end{problem}

This positive-weight scalar problem illustrates the nonlinear theorem's
Hamiltonian mechanism within an established MF-LQ class, and its optimizer also
exists by coercive quadratic optimization on the control Hilbert space. The aim
here is to verify the abstract assumptions of Sections~\ref{sec:formulation}
and~\ref{sec:proofs} directly on a concrete MF-LQ instance; the resulting scope,
in particular the absence of a Riccati-based feedback representation, is stated
in Section~\ref{sec:scope}.

\begin{theorem}\label{thm:lq}
Assumption~\ref{ass:lq} permits a choice $\mu>0$ and $\nu=0$ for which the
coefficients in~\eqref{eq:lq-coefficients} meet
Assumptions~\ref{ass:lipschitz} and~\ref{ass:dm} in the first monotonicity
orientation; some domination matrices may be degenerate. The associated
Hamiltonian system~\eqref{11} is therefore well posed, with its sole solution
\begin{flalign*}
	&
	(x^*(\cdot),y(\cdot))\in
	N^2(\overline{\mathbb T};\mathbb R^2).
	&
\end{flalign*}
	The control in~\eqref{eq:optimal-control}, evaluated at this Hamiltonian
solution, is the unique open-loop optimizer of Problem~\ref{prob:lq}.
\end{theorem}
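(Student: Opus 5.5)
The plan is to write down the Hamiltonian system explicitly, check Assumptions~\ref{ass:lipschitz} and~\ref{ass:dm} for it in Case~A under the first orientation, and apply Theorem~\ref{thm:main}. A separate convexity (duality) argument then identifies the resulting control as the unique optimizer.

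\emph{Step 1: Hamiltonian system and candidate control.} Pair the state difference with an adjoint $y$ using the discrete duality identity of Step~2 in Lemma~\ref{lem:apriori}. A control perturbation $v$ then enters $\mathbb E\langle \widehat x_{k+1},y_{k+1}\rangle$ through
\[
\mathbb E\bigl[v_k(\alpha_ky'_{k+1}+c_kz'_{k+1})+\mathbb E[v_k](\bar\alpha_k\mathbb E y'_{k+1}+\bar c_k\mathbb E z'_{k+1})\bigr].
\]
Split into centered and mean parts. Setting the G\^ateaux derivative of $J$ to zero gives the candidate
\[
u_k=-\frac{1}{r_k}\bigl(\alpha_ky'_{k+1}+c_kz'_{k+1}\bigr)^{(1)}-\frac{1}{r_k+\bar r_k}\bigl((\alpha_k+\bar\alpha_k)\mathbb E y'_{k+1}+(c_k+\bar c_k)\mathbb E z'_{k+1}\bigr),
\]
which should coincide with~\eqref{eq:optimal-control}. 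Substituting it into~\eqref{eq:lq-state} gives $b,\sigma$, which are linear in the centered and mean parts of $(y',z')$ plus $a_kx_k+d_k$ and $\delta_k$. The backward map $f$ collects $a_ky'_{k+1}$, $q_kx_k^{(1)}$, $\bar q_k(x_k^{(2)}-m_k)$ and $\ell_k$, with the sign fixed by $y_k=-f$. The terminal map is $\Phi(x,\bar x)=h(x-\bar x)+\bar h(\bar x-m_N)+\ell_N$, and $\Lambda\equiv x$. Lipschitz continuity and the integrability in Assumption~\ref{ass:lipschitz} follow from boundedness of the coefficients and $r_k,r_k+\bar r_k\ge\lambda$.

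\emph{Step 2: domination and monotonicity (main obstacle).} Take $\nu=0$, $M=G=\widetilde G=A_k=\widetilde A_k=0$, and set
\[
B_k=\alpha_k/\sqrt{r_k},\quad C_k=c_k/\sqrt{r_k},\quad \widetilde B_k=(\alpha_k+\bar\alpha_k)/\sqrt{r_k+\bar r_k},\quad \widetilde C_k=(c_k+\bar c_k)/\sqrt{r_k+\bar r_k}.
\]
In the monotonicity pairing, the $a_k$ terms between $f$ and $b$ cancel, just as the $0.7$ terms cancel in Example~\ref{ex:nonlinear}. Centered/mean orthogonality then leaves
\[
-\mathbb E\bigl[q_k|\widehat x_k^{(1)}|^2+\bar q_k|\widehat x_k^{(2)}|^2\bigr]-\mathbb E|\mathcal B_k|^2\le-\mu\,\mathbb E|\mathcal B_k|^2
\]
for any $\mu\le1$. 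The terminal pairing is $h\mathbb E|\widehat x^{(1)}|^2+\bar h|\widehat x^{(2)}|^2\ge0$, and the initial condition is trivial because $\Lambda$ is constant.

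For domination with $X$ fixed, $\Delta b$ equals $-\alpha_k r_k^{-1/2}$ times the centered part of $\mathcal B_k$, minus the corresponding mean factor times the mean part. Orthogonality then gives $\mathbb E|\Delta b|^2\le \lambda^{-1}\max_k(|\alpha_k|^2+|\alpha_k+\bar\alpha_k|^2)\,\mathbb E|\mathcal B_k|^2$, and $\sigma$ is handled in the same way. Choosing $\mu$ small enough, and invoking Lemma~\ref{lem:normalization} for condition~(iv), closes Case~A. Some of $B_k,C_k,\widetilde B_k,\widetilde C_k$ may vanish, which is exactly the degeneracy Theorem~\ref{thm:main} tolerates.

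The delicate point is the sign bookkeeping in Step~2: the mean-field coefficients $\bar\alpha_k,\bar c_k$ must enter $b$ and $\sigma$ in precisely the combination whose pairing with $(\widehat y',\widehat z')$ reproduces $-|\mathcal B_k|^2$, with the convention $y_k=-f$. I would verify this first, on the scalar case with the mean and centered parts written separately.

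\emph{Step 3: optimality and uniqueness.} Theorem~\ref{thm:main} gives the unique Hamiltonian solution $(x^*,y)$. Let $u^*$ be the control above and $u\in\mathbb U$ arbitrary, with state $x$. Expand $J(u)-J(u^*)$, using convexity of the quadratic state terms (all of $q_k,\bar q_k,h,\bar h\ge0$). The duality identity, applied to $\widehat x=x-x^*$, which has $\widehat x_0=0$, turns the linear terms into the control pairing, and this pairing vanishes by the stationarity relation defining $u^*$. What remains is
\[
J(u)-J(u^*)\ge\tfrac12\,\mathbb E\sum_k\bigl(r_k|\widehat u_k^{(1)}|^2+(r_k+\bar r_k)|\widehat u_k^{(2)}|^2\bigr)\ge\tfrac{\lambda}{2}\,\mathbb E\sum_k|\widehat u_k|^2 ,
\]
which yields both optimality and uniqueness of $u^*$.
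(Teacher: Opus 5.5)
Your proposal is correct and follows essentially the same route as the paper. You use Case~A with $\nu=0$, $M=G=\widetilde G=A_k=\widetilde A_k=0$ and the same $B_k,C_k,\widetilde B_k,\widetilde C_k$; you get domination and monotonicity from centered/mean orthogonality and the cancellation of the $a_k$ terms; and you get optimality and uniqueness from the quadratic expansion combined with duality and stationarity. The one step the paper makes explicit and you leave implicit is that $u^*\in\mathbb U$, which follows from \eqref{eq:projection-bound} through the bound $\mathbb E\bigl(|y'_{k+1}|^2+|z'_{k+1}|^2\bigr)\le 2\,\mathbb E|y_{k+1}|^2$.
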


\subsection{Derivation of the Hamiltonian system}
The derivation is elementary but lengthy: it differentiates the affine state
recursion, introduces the adjoint process, and evaluates the resulting
first-order condition. The complete computation, which yields the
stationarity condition~\eqref{eq:stationarity}, the open-loop
control~\eqref{eq:optimal-control}, and the Hamiltonian
coefficients~\eqref{eq:lq-coefficients}, is collected in
Appendix~\ref{app:hamiltonian}.

\subsection{Structural verification and optimality proof}
\begin{proof}[Proof of Theorem~\ref{thm:lq}]
\emph{Step 1: regularity and domination.}
The coefficients~\eqref{eq:lq-coefficients} are affine in $(\theta,\zeta)$ with
bounded deterministic linear parts. Their values at zero are
\[
\Lambda(0)=x,\quad\Phi(0,0)=-\bar h m_N+\ell_N,
\]
and $\Gamma(k,0,0)=(\bar q_km_k-\ell_k,d_k,\delta_k)$.
Assumption~\ref{ass:lipschitz} follows from Assumption~\ref{ass:lq}.
For Case~A take $M=G=\widetilde G=A_k=\widetilde A_k=0$ and use the
displayed $B_k,C_k,\widetilde B_k,\widetilde C_k$ as domination matrices.
Let
\[
K_0=\sup_k\max\{|B_k|,|C_k|,|\widetilde B_k|,|\widetilde C_k|\}.
\]
Choose
\begin{align*}
0<\mu&\le\min\bigl\{1,(1+K_0)^{-1},\\
&\qquad(1+\kappa_B)^{-1/2},(1+\kappa_C)^{-1/2}\bigr\},\qquad\nu=0.
\end{align*}
This gives $\mu K_0\le1$ and the normalization in
Assumption~\ref{ass:dm}(iv). Initial domination is immediate.
For differences with $X$ fixed, put
\[
V_1=B_k\widehat Y^{(1)}+C_k\widehat Z^{(1)},\qquad
V_2=\widetilde B_k\widehat Y^{(2)}+\widetilde C_k\widehat Z^{(2)}.
\]
Then $\Delta b=-B_kV_1-\widetilde B_kV_2$ and
$\Delta\sigma=-C_kV_1-\widetilde C_kV_2$.
Because $V_1$ is centered and $V_2$ deterministic,
\begin{align*}
\mathbb E|\Delta b|^2
&=B_k^2\mathbb E|V_1|^2+\widetilde B_k^2|V_2|^2\\
&\le\mu^{-2}\mathbb E|V_1+V_2|^2,
\end{align*}
and the identical estimate with $C_k,\widetilde C_k$ proves diffusion domination.
Zero or rank-deficient combinations cause no difficulty: their kernels also
annihilate these coefficient differences.

\emph{Step 2: monotonicity.}
The initial pairing is zero and the terminal pairing is
\[
\mathbb E[\Delta\Phi\widehat X]
=h\mathbb E|\widehat X^{(1)}|^2+\bar h|\widehat X^{(2)}|^2\ge0.
\]
For arbitrary differences, the $a_k$ terms cancel, and orthogonality gives
\begin{align*}
\mathbb E\langle\Delta\Gamma,\widehat\Theta\rangle
={}&-q_k\mathbb E|\widehat X^{(1)}|^2-\bar q_k|\widehat X^{(2)}|^2\\
&-\mathbb E|V_1|^2-|V_2|^2\\
\le{}&-\mu\mathbb E|V_1+V_2|^2.
\end{align*}
Thus Assumption~\ref{ass:dm} holds. Theorem~\ref{thm:main} supplies the unique
Hamiltonian solution $(x^*,y)\in N^2$.

\emph{Step 3: admissibility, optimality, and uniqueness.}
Bounded coefficients and $r_k,r_k+\bar r_k\ge\lambda$ give
\[
\mathbb E|u_k^*|^2\le C\mathbb E(|y'_{k+1}|^2+|z'_{k+1}|^2)
\le2C\mathbb E|y_{k+1}|^2.
\]
The last inequality is~\eqref{eq:projection-bound}; predictability and summation
give $u^*\in\mathbb U$. The state map from $\mathbb U$ to the state $L^2$
space is bounded and affine by the forward estimate. Hence $J$ is a continuous
quadratic functional with a Fr\'echet derivative, so the variation and expansion
used below are valid on the entire control Hilbert space. Substitution in~\eqref{eq:lq-state} recovers $x^*$.
For any $u\in\mathbb U$, set $\Delta u=u-u^*$ and $\Delta x=x-x^*$.
Affinity of the state equation makes $\Delta x$ its exact first variation.
The exact quadratic expansion is
\begin{align*}
J(u)-J(u^*)={}&DJ(u^*)\Delta u+\tfrac12\mathbb E\Bigg[
\sum_{k=0}^{N-1}\Big(q_k|\Delta x_k^{(1)}|^2\\
&+\bar q_k|\Delta x_k^{(2)}|^2+r_k|\Delta u_k^{(1)}|^2\\
&+(r_k+\bar r_k)|\Delta u_k^{(2)}|^2\Big)\\
&+h|\Delta x_N^{(1)}|^2+\bar h|\Delta x_N^{(2)}|^2\Bigg].
\end{align*}
Duality~\eqref{eq:duality-lq} and stationarity~\eqref{eq:stationarity} make the
first term zero. All state weights are nonnegative, so
\[
J(u)-J(u^*)\ge\frac\lambda2\mathbb E\sum_{k=0}^{N-1}|u_k-u_k^*|^2.
\]
This proves optimality and uniqueness in the control Hilbert space.
\end{proof}

\subsection{Exact finite scenario-tree computation}\label{subsec:computation}
For a reproducible illustration take $N=4$, $x_0=-0.5$, and independent
Rademacher innovations, each equal to $-1$ or $1$ with probability $1/2$.
Use the following constants at every running time:
\begin{align*}
&(a,\alpha,c,d,\delta)=(0.9,0.6,0.2,-0.1,0.25),\\
&(q,\bar q,r,\bar r)=(0.5,1,0.4,0.2),\qquad(h,\bar h)=(1,2),\\
&m_k=\ell_k=0\quad(0\le k\le N),\\
&\bar\alpha=\eta,\qquad\bar c=0.1\eta,
\qquad\eta\in\{0,0.3,0.6\}.
\end{align*}
Here $\eta$ is an experiment parameter, unrelated to the terminal source in
the continuation proof. All three configurations satisfy Assumption~\ref{ass:lq}.
This is a synthetic model check, not an empirical calibration.

At time $k$ there are $2^k$ equiprobable histories. Assign one control to each
history, so the entire admissible control is a vector $v\in\mathbb R^{15}$.
With children ordered as $(-1,+1)$, exact conditional projections are
\[
y'_{k+1}=\tfrac12(y^-_{k+1}+y^+_{k+1}),\qquad
z'_{k+1}=\tfrac12(y^+_{k+1}-y^-_{k+1}).
\]
Unconditional means are averages over the $2^k$ histories at each level.
Because the state is affine in $v$, the cost is
\[
J(v)=\tfrac12v^\top H v+g^\top v+J(0).
\]
The direct method constructs $H$ and $g$ from the affine state sensitivities
and solves $Hv=-g$. A separate Hamiltonian method constructs the terminal and
backward adjoints from the states, forms the nodewise stationarity residual
$s(v)=Kv+s(0)$ using~\eqref{eq:stationarity}, and solves $Kv=-s(0)$.
The implementations additionally check $H=WK$, where $W$ is diagonal with
entry $2^{-k}$ for each time-$k$ control. This accounts for the probability
weights in the Euclidean cost gradient. Both methods enumerate the entire tree;
there is no sampling or Monte Carlo error.

\begin{table}[t]
\centering\small
\caption{Exact scenario-tree results, rounded to six decimals.}\label{tab:tree-results}
\begin{tabular}{rrrrr}
\hline
$\eta$ & $u_0^*$ & $\mathbb E[x_N^*]$ & $\mathrm{sd}(x_N^*)$ & $J(u^*)$\\
\hline
0.0 & 0.504422 & -0.111067 & 0.328640 & 0.485773\\
0.3 & 0.418873 & -0.062637 & 0.319500 & 0.390460\\
0.6 & 0.352644 & -0.044631 & 0.314766 & 0.347350\\
\hline
\end{tabular}
\end{table}

The uncontrolled cost is $J(0)=1.282822$ in all three cases. This common value
is expected: $\eta$ enters the dynamics only through the aggregate terms
$\bar\alpha_k\mathbb E[u_k]$ and $\bar c_k\mathbb E[u_k]w_k$, both of which
vanish when $u\equiv0$, so the parameter drops out of the state recursion and
hence out of the cost. The control energy
$\mathbb E\sum_k|u_k^*|^2$ is respectively $0.573689$, $0.397404$, and
$0.306183$. Across the three configurations, the largest difference between
the two computed control vectors is below $8\times10^{-16}$; the largest
nodewise stationarity residual is below $1.3\times10^{-15}$; and
$\|H-WK\|_{\max}<3.6\times10^{-15}$. The smallest Euclidean Hessian eigenvalue
exceeds $0.0759$ in every case. These are floating-point checks of the finite
instance, complementing the analytical uniqueness proof.

The same two constructions were repeated at larger horizons to check that the
agreement is not an artifact of the smallest tree. For $N=6$ ($63$ controls) the
largest discrepancy between the two computed control vectors is below
$5.6\times10^{-15}$ and the largest nodewise stationarity residual below
$1.4\times10^{-15}$; for $N=8$ ($255$ controls) the corresponding bounds are
$2.1\times10^{-14}$ and $7.2\times10^{-15}$. The smallest Euclidean Hessian
eigenvalue decreases from $0.0760$ at $N=4$ through $0.0190$ at $N=6$ to
$0.0047$ at $N=8$: the quadratic form remains strictly coercive at every horizon
tested, while the margin shrinks as the tree grows. Both horizons are produced by
the same script described below.

For these selected positive aggregate-response coefficients, increasing $\eta$
reduces the optimal initial adjustment, terminal mean shortage, and total cost.
The parameters $\bar\alpha$ and $\bar c$ vary together, so this comparison
does not isolate the separate drift and diffusion effects or establish a general
monotonicity law. The same finite-tree construction can also compute
Riccati-based policies; the present cross-check instead uses direct quadratic
optimization, independently of the Hamiltonian stationarity assembly.
The accompanying script \texttt{revision0930\_liquidity\_check.py} records
all nodewise states, adjoints, controls, and unrounded diagnostics in JSON and
summary results in CSV, including the $N=6$ and $N=8$ runs reported above.

\section{Scope and Limitations}\label{sec:scope}
The results are deliberately restricted, and the restrictions that remain in
force are collected here.

\emph{Sufficient, coefficient-level conditions.} Assumptions~\ref{ass:lipschitz}
and~\ref{ass:dm} are sufficient conditions. No necessary conditions and no
characterization of all solvable coefficient families are provided. The theory
is stated at the level of the coefficients: only the first moment
$\mathbb E[\theta_k]$ and the terminal mean $\mathbb E[x_N]$ are coupled, and
dependence on the full probability law of the state is not covered.

\emph{No linear characterization.} For linear MF-FBS$\Delta$Es the necessary
and sufficient Riccati criterion of~\cite{Ma2025} is not recovered: the
assumptions above remain sufficient, and they do not locate the boundary
between solvable and non-solvable linear coefficient families.

\emph{Unconditional means.} The mean-field terms involve unconditional
expectations. Scalar innovations satisfying
$\mathbb E[w_k^2\mid\mathcal F_{k-1}]=1$, together with unconditional means, do
not model a population law conditioned on common noise. Conditioning the
aggregate law on a common noise would change the projection structure
$y'_{k+1}=\mathbb E[y_{k+1}\mid\mathcal F_{k-1}]$,
$z'_{k+1}=\mathbb E[y_{k+1}w_k\mid\mathcal F_{k-1}]$ and is not treated.

\emph{One active coercivity direction.} The continuation anchor~\eqref{17}
requires a sequentially solvable system at $\alpha=0$, so a single coercivity
direction is retained. Systems with two genuinely active directions are covered
only through Remark~\ref{rem:unilateral}, that is, after discarding one of them.
The solution constants depend on the active parameter through $\mu^{-2}$ or
$\nu^{-2}$ (see \eqref{eq:mu-dependence}--\eqref{eq:nu-dependence}) and are not
bounded uniformly as it tends to zero; by Lemma~\ref{lem:normalization} this is
not an additional restriction on coefficients already satisfying (i)--(iii).

\emph{The LQ part is an illustration.} The scalar buffer-adjustment model
verifies the abstract assumptions within an established MF-LQ class and exposes
the Hamiltonian mechanism. It does not supply a new general MF-LQ solvability
criterion, a Riccati-based state-feedback representation, or an economic
calibration. Its control formula is evaluated along the Hamiltonian solution,
and the exact scenario-tree computation is a finite-instance check at $N=4$.

\emph{Unaddressed extensions.} Constraints on the state or the control, infinite
horizons, law-dependent coefficients, and scalable numerical methods require
additional analysis and are outside the present results.

\section{Conclusion}\label{sec:conclusion}
For a finite-horizon nonlinear stochastic difference system with unconditional
first-moment interactions, domination of frozen coefficient differences and
coercivity of the same centered/mean matrix combinations close the continuation
estimate. This yields a unique square-integrable adapted solution and coefficient
stability. The bounds are uniform in the homotopy parameter, with dependence on
the active coercivity constant. Normalization is always available for fixed
bounded domination matrices, and stronger two-sided conditions can be weakened
to the one-sided construction. The nonlinear saturation example verifies that
the result extends beyond the linear class and permits degenerate combinations,
and Corollary~\ref{cor:degenerate} shows that switching off the mean-field terms
returns the non-mean-field theory.

The positive-weight scalar LQ buffer model illustrates the theorem within
existing mean-field control theory. Its optimality formula is an open-loop
Hamiltonian characterization, and the exact scenario-tree comparison supplies a
finite-instance computational check. The limitations of the setting are listed
in Section~\ref{sec:scope}.

\appendix
\section{Derivation of the Hamiltonian system}\label{app:hamiltonian}
This appendix supplies the derivation summarized in
Section~\ref{sec:application}. The equation numbers below are those of the
appendix; all labels are shared with the main text.

Fix $u\in\mathbb U$ and a direction $v\in\mathbb U$. Let $x^\varepsilon$ be the state corresponding to $u+\varepsilon v$. Since the state equation is affine, the variational process $\xi_k=(x_k^\varepsilon-x_k)/\varepsilon$ is independent of $\varepsilon$ and satisfies
\begin{flalign}\label{eq:variation}
&\begin{cases}
\begin{aligned}
\xi_{k+1}={}&a_k\xi_k+\alpha_kv_k+\bar\alpha_k\mathbb E[v_k]\\
&+\bigl(c_kv_k+\bar c_k\mathbb E[v_k]\bigr)w_k,
\end{aligned}\\
\xi_0=0.
\end{cases}&
\end{flalign}
Introduce the adjoint process
\begin{flalign}\label{eq:adjoint}
&\begin{cases}
y_k=a_ky'_{k+1}+q_kx_k^{(1)}+\bar q_k(x_k^{(2)}-m_k)+\ell_k,\\
y_N=h x_N^{(1)}+\bar h(x_N^{(2)}-m_N)+\ell_N,
\end{cases}&
\end{flalign}
where $y'_{k+1}=\mathbb E[y_{k+1}\mid\mathcal F_{k-1}]$ and $z'_{k+1}=\mathbb E[y_{k+1}w_k\mid\mathcal F_{k-1}]$. Multiplying~\eqref{eq:variation} by $y_{k+1}$ and taking conditional expectations gives, for each $k$,
\begin{flalign*}
&\begin{aligned}
\mathbb E[\xi_{k+1}y_{k+1}]={}&
\mathbb E\Big[\big(a_k\xi_k+\alpha_kv_k+\bar\alpha_k\mathbb E[v_k]\big)y'_{k+1}\Big]\\
&+\mathbb E\Big[\big(c_kv_k+\bar c_k\mathbb E[v_k]\big)z'_{k+1}\Big].
\end{aligned}&
\end{flalign*}
Summing the adjoint identity and using \(\xi_0=0\) cancels the state-variation
terms, leaving
\begin{flalign}\label{eq:duality-lq}
&\begin{aligned}
DJ(u)v=\mathbb E\sum_{k=0}^{N-1}\Big[&\bigl(r_ku_k^{(1)}+\alpha_ky_{k+1}'^{(1)}+c_kz_{k+1}'^{(1)}\bigr)v_k^{(1)}\\
&+\bigl((r_k+\bar r_k)u_k^{(2)}+(\alpha_k+\bar\alpha_k)y_{k+1}'^{(2)}\\
&+(c_k+\bar c_k)z_{k+1}'^{(2)}\bigr)v_k^{(2)}\Big].
\end{aligned}&
\end{flalign}
The centered and mean components of admissible variations can be varied independently. Indeed, for any square-integrable adapted centered process \(\tilde v_k\) with \(\mathbb E[\tilde v_k]=0\) and any deterministic square-summable sequence \(\bar v_k\), the process \(v_k=\tilde v_k+\bar v_k\) belongs to \(\mathbb U\), with \(v_k^{(1)}=\tilde v_k\) and \(v_k^{(2)}=\bar v_k\). Therefore the first-order condition \(DJ(u^*)v=0\) for every \(v\in\mathbb U\) is equivalent to
\begin{flalign}\label{eq:stationarity}
&\begin{cases}
r_ku_k^{*(1)}+\alpha_ky_{k+1}'^{(1)}+c_kz_{k+1}'^{(1)}=0,\\
\begin{aligned}
&(r_k+\bar r_k)u_k^{*(2)}+(\alpha_k+\bar\alpha_k)y_{k+1}'^{(2)}\\
&\qquad +(c_k+\bar c_k)z_{k+1}'^{(2)}=0.
\end{aligned}
\end{cases}&
\end{flalign}
Solving~\eqref{eq:stationarity},
\begin{flalign}\label{eq:optimal-control}
&\begin{aligned}
u_k^*={}&-\frac{\alpha_ky_{k+1}'^{(1)}+c_kz_{k+1}'^{(1)}}{r_k}\\
&-\frac{(\alpha_k+\bar\alpha_k)y_{k+1}'^{(2)}}{r_k+\bar r_k}
-\frac{(c_k+\bar c_k)z_{k+1}'^{(2)}}{r_k+\bar r_k}.
\end{aligned}&
\end{flalign}

Formula~\eqref{eq:optimal-control} is an implicit Hamiltonian characterization:
the projected adjoint must be obtained with the optimal state. It does not by
itself give a state-feedback implementation. Section~\ref{subsec:computation}
computes the open-loop solution on a finite tree.

Set
\begin{flalign*}
& B_k=\frac{\alpha_k}{\sqrt{r_k}},\qquad
C_k=\frac{c_k}{\sqrt{r_k}}, &\\[-0.2em]
& \widetilde B_k=\frac{\alpha_k+\bar\alpha_k}{\sqrt{r_k+\bar r_k}},\qquad
\widetilde C_k=\frac{c_k+\bar c_k}{\sqrt{r_k+\bar r_k}}. &
\end{flalign*}
Set
\begin{flalign*}
&\mathcal V_k^{(1)}=B_ky_{k+1}'^{(1)}+C_kz_{k+1}'^{(1)},&\\
&\mathcal V_k^{(2)}=\widetilde B_ky_{k+1}'^{(2)}+\widetilde C_kz_{k+1}'^{(2)}.&
\end{flalign*}
Substitution of~\eqref{eq:optimal-control} into~\eqref{eq:lq-state} gives the Hamiltonian MF-FBS$\Delta$E~\eqref{11}, with $\Lambda(y)\equiv x$ and
\begin{flalign}\label{eq:lq-coefficients}
&\left\{
\begin{aligned}
b(k,\theta_k,\mathbb E[\theta_k])={}&a_kx_k+d_k-B_k\mathcal V_k^{(1)}\\
&-\widetilde B_k\mathcal V_k^{(2)},\\
\sigma(k,\theta_k,\mathbb E[\theta_k])={}&\delta_k-C_k\mathcal V_k^{(1)}\\
&-\widetilde C_k\mathcal V_k^{(2)},\\
f(k+1,\theta_k,\mathbb E[\theta_k])={}&-a_ky_{k+1}'-q_kx_k^{(1)}\\
&-\bar q_kx_k^{(2)}+\bar q_km_k-\ell_k,\\
\Phi(x_N,\mathbb E[x_N])={}&h x_N^{(1)}+\bar h x_N^{(2)}\\
&-\bar h m_N+\ell_N.
\end{aligned}
\right.&
\end{flalign}

\section*{CRediT authorship contribution statement}
\textbf{Yingjie Zheng:} Conceptualization, Methodology, Formal analysis,
Software, Validation, Writing -- original draft.
\textbf{Qingxin Meng:} Conceptualization, Methodology, Supervision, Writing --
review and editing.
\textbf{Maoning Tang:} Conceptualization, Supervision, Project administration,
Writing -- review and editing.

\section*{Declaration of competing interest}
The authors declare that they have no known competing financial interests or
personal relationships that could have appeared to influence the work reported
in this paper.

\section*{Data availability}
No external data were used. All numerical results are reproduced by the
accompanying script \texttt{revision0930\_liquidity\_}\allowbreak\texttt{check.py},
which is provided with the submission and generates both the summary table and
the nodewise diagnostics.

\section*{Acknowledgments}
This work was supported by the National Natural Science Foundation of China
(No.~12271158), the Key Program of Zhejiang Provincial Natural Science
Foundation (No.~LZ22A010005), the Natural Science Foundation of Huzhou City
(No.~2023YZ46), and the Postgraduate Research and Innovation Project of Huzhou
Normal University (No.~2024KYCX66).

\end{document}